\DeclareMathAlphabet{\mathbbm}{U}{bbm}{m}{n}
\journal{Paper Submitted to J. Comput. Phys.}
\tikzstyle{terminator} = [rectangle, draw, text centered, rounded corners]
\tikzstyle{process} = [rectangle, draw, text centered]
\tikzstyle{decision} = [diamond, draw, text centered]
\tikzstyle{data}=[trapezium, draw, text centered, trapezium left angle=60, trapezium right angle=120]
\tikzstyle{connector} = [draw, -{latex[length=2mm]}, thick, blue]
\tikzstyle{connector2} = [draw, -{latex[length=2mm]}, thick, green]
\newtheorem{theorem}{Theorem}[section]
\newtheorem{lemma}[theorem]{Lemma}
\newtheorem{proposition}[theorem]{Proposition}
\newtheorem{remark}{Remark}[section]
\numberwithin{equation}{section}
\theoremstyle{definition}
\newtheorem{exa}[remark]{Example}
\theoremstyle{remark}
\newcommand{\BU}{\mathbf{U}}
\newcommand{\hBU}{\widehat{\BU}}
\newcommand{\tBU}{\widetilde{\BU}}
\newcommand{\cBU}{\widecheck{\BU}}
\newcommand{\BF}{\mathbf{F}}
\newcommand{\hBF}{\widehat{\BF}}
\newcommand{\cBF}{\widecheck{\BF}}
\newcommand{\Bn}{\mathbf{n}}
\newcommand{\oBU}{\overline{\BU}}
\newcommand{\rz}{z_\star}
\newcommand{\toBU}{
	\mathrlap{
		\raisebox{1.2 pt}{$\overline{\phantom{\mathbf{U}}}$}
	}
	\mathrlap{
		\raisebox{-0.9 pt}{$\widetilde{\phantom{\mathbf{U}}}$}
	}
	\mathbf{U}
}
\newcommand{\hoBU}{
	\mathrlap{
		\raisebox{1.2 pt}{$\overline{\phantom{\mathbf{U}}}$}
	}
	\mathrlap{
		\raisebox{-0.9 pt}{$\widehat{\phantom{\mathbf{U}}}$}
	}
	\mathbf{U}
}
\newcommand{\hcBU}{
	\mathrlap{
		\raisebox{0.6 pt}{$\widecheck{\phantom{\mathbf{U}}}$}
	}
	\mathrlap{
		\raisebox{-0.6 pt}{$\widehat{\phantom{\mathbf{U}}}$}
	}
	\mathbf{U}
}
\newcommand{\hcr}{
	\mathrlap{
		\raisebox{0.5 pt}{$\check{\phantom{\rho}}$}
	}
	\mathrlap{
		\raisebox{-0.5 pt}{$\hat{\phantom{\rho}}$}
	}
	\rho
}
\newcommand{\hcy}{
	\mathrlap{
		\raisebox{0.5 pt}{$\check{\phantom{y}}$}
	}
	\mathrlap{
		\raisebox{-0.5 pt}{$\hat{\phantom{y}}$}
	}
	y
}
\newcommand{\hcz}{
	\mathrlap{
		\raisebox{0.5 pt}{$\check{\phantom{z}}$}
	}
	\mathrlap{
		\raisebox{-0.5 pt}{$\hat{\phantom{z}}$}
	}
	z
}
\newcommand{\hcv}{
	\mathrlap{
		\raisebox{0.5 pt}{$\check{\phantom{v}}$}
	}
	\mathrlap{
		\raisebox{-0.5 pt}{$\hat{\phantom{v}}$}
	}
	v
}
\newcommand{\hcs}{
	\mathrlap{
		\raisebox{0.5 pt}{$\check{\phantom{s}}$}
	}
	\mathrlap{
		\raisebox{-0.5 pt}{$\hat{\phantom{s}}$}
	}
	s
}
\newcommand{\hcBn}{
	\mathrlap{
		\raisebox{0.5 pt}{$\check{\phantom{\Bn}}$}
	}
	\mathrlap{
		\raisebox{-0.5 pt}{$\hat{\phantom{\Bn}}$}
	}
	\Bn
}
\newcommand{\hbr}{
	\mathrlap{
		\raisebox{1.4 pt}{$\bar{\phantom{\rho}}$}
	}
	\mathrlap{
		\raisebox{-0.4 pt}{$\hat{\phantom{\rho}}$}
	}
	\rho
}
\newcommand{\hbz}{
	\mathrlap{
		\raisebox{1.4 pt}{$\bar{\phantom{z}}$}
	}
	\mathrlap{
		\raisebox{-0.4 pt}{$\hat{\phantom{z}}$}
	}
	z
}
\begin{document}

\verso{W. Chen, S. Cui, K. Wu, T. Xiong}

\begin{frontmatter}
	
	\title {
		Bound-preserving OEDG schemes for Aw--Rascle--Zhang traffic models \\ on networks \tnoteref{tnote1}
	}
	
	\tnotetext[tnote1]{
		The work of Wei Chen and Tao Xiong was partially supported by National Key R\&D Program of China No. 2022YFA1004500, NSFC grant No. 11971025, and NSF of Fujian Province grant No. 2023J02003.
		The work of Shumo Cui and Kailiang Wu was partially supported by Shenzhen Science and Technology Program grant No. RCJC20221008092757098.
		The work of Kailiang Wu was also partially supported by NSFC grant No. 12171227.
	}
	
	\author[XU]{Wei \snm{Chen}}
	\ead{weichenmath@stu.xmu.edu.cn}
	
	\author[ICM,SUSTech]{Shumo \snm{Cui}\corref{cor1}}
	\ead{cuism@sustech.edu.cn}
	
	\author[SUSTech,ICM,NCAMS]{Kailiang \snm{Wu}}
	\ead{wukl@sustech.edu.cn}
	
	\author[USTC]{Tao \snm{Xiong}\corref{cor1}}
	\ead{txiong@xmu.edu.cn}
	
	\address[XU]{School of Mathematical Sciences, Xiamen University, Xiamen, Fujian 361005, China.}
	\address[SUSTech]{Department of Mathematics, Southern University of Science and Technology, Shenzhen 518055, China.}
	\address[ICM]{Shenzhen International Center for Mathematics, Southern University of Science and Technology, Shenzhen 518055, China.}
	\address[NCAMS]{National Center for Applied Mathematics Shenzhen (NCAMS), Shenzhen 518055, China.}
	\address[USTC]{School of Mathematical Sciences, University of Science and Technology of China, Hefei, Anhui 230026, China.}
	
	\cortext[cor1]{Corresponding author.}
	

\begin{abstract}
Physical solutions to the widely used Aw--Rascle--Zhang (ARZ) traffic model and the adapted pressure (AP) ARZ model should satisfy the positivity of density, the minimum and maximum principles with respect to the velocity $v$ and other Riemann invariants. 
Many numerical schemes suffer from instabilities caused by violating these bounds, and the only existing bound-preserving (BP) numerical scheme (for ARZ model) is random, only first-order accurate, and not strictly conservative.
This paper introduces arbitrarily high-order provably BP DG schemes for these two models, preserving all the aforementioned bounds except the maximum principle of $v$, which has been rigorously proven to conflict with the consistency and conservation of numerical schemes.
Although the maximum principle of $v$ is not directly enforced, we find that the strictly preserved maximum principle of another Riemann invariant $w$ actually enforces an alternative upper bound on $v$.
At the core of this work, analyzing and rigorously proving the BP property is a particularly nontrivial task: the Lax--Friedrichs (LF) splitting property, usually expected for hyperbolic conservation laws and employed to construct BP schemes, does not hold for these two models. To overcome this challenge, we formulate a generalized version of the LF splitting property, and prove it via the geometric quasilinearization (GQL) approach [Kailiang Wu and Chi-Wang Shu, \textit{SIAM Review}, 65: 1031--1073, 2023]. To suppress spurious oscillations in the DG solutions, we incorporate the oscillation-eliminating (OE) technique, recently proposed in [Manting Peng, Zheng Sun, and Kailiang Wu, \textit{Mathematics of Computation}, in press, https://doi.org/10.1090/mcom/3998], which is based on the solution operator of a novel damping equation.
Several numerical examples are included to demonstrate the effectiveness, accuracy, and BP properties of our schemes, with applications to traffic simulations on road networks. 
\end{abstract}
	
	
\end{frontmatter}

\section{Introduction}

Macroscopic traffic models play a crucial role in transportation management, prediction, and planning.
Rather than tracking individual vehicle movements, these models focus on describing the average properties of traffic flow, such as density and mean velocity.
Several macroscopic traffic models have been proposed, for example, the Lighthill--Whitham--Richards (LWR) \cite{richards1956shock,lighthill1955kinematic}, the Payne--Whitham (PW) \cite{payne1971model}, and the Aw--Rascle--Zhang (ARZ) \cite{zhang2002non,aw2000resurrection} models.
The LWR model is constituted by a single conservation law of vehicle density. This model assumes the traffic velocity solely depends on traffic density via a closure relationship.
The solution to the LWR model is always bounded by the minimum and maximum of initial condition. Such a characteristic makes the LWR model unsuitable for describing traffic instabilities, such as the phenomena of small traffic fluctuations developing into waves of larger magnitude without external disturbance.
The PW model \cite{payne1971model} introduces an additional equation of momentum, and the minimum and maximum principles no longer hold, allowing this model to represent the aforementioned traffic instabilities \cite{FlynnKasimovNaveRosalesSeibold2009}.
However, one of the eigenspeeds of the PW model is nonphysically large, potentially leading to unrealistic traffic behavior.
As an improvement over the PW model, Aw and Rascle \cite{aw2000resurrection}, and independently Zhang \cite{zhang2002non}, proposed the ARZ model, which resolves the issue of unrealistic eigenspeeds. This ARZ model was later extended and generalized in \cite{Greenberg2002,BerthelinDegondDelitalaRascle2008,gottlich2021second}. 

While these models provide valuable insights into traffic flows on single road segments, real-world traffic systems are composed of road networks, making the design and utilization of appropriate coupling conditions of great importance.
Some recent works in this respect include \cite{BressanCanicSunicaGaravelloHertyPiccoli2014,GaravelloHanPiccoli2016,GaravelloPiccoli2006}.
Considering the application of ARZ model on networks, various junction conditions have been proposed, e.g., \cite{herty2006optimization,garavello2006traffic,haut2007second,siebel2009balanced,gottlich2021second,kolb2018pareto,kolb2017capacity}. However, as outlined in \cite{herty2006coupling}, most abovementioned junction conditions are not compatible with the Lagrangian/microscopic form of the ARZ model. 
This incompatibility arises because these junction conditions often ignore the dependency of the pressure function on the Lagrangian marker and use the same pressure function for both the upstream and downstream sides of the junction.

\subsection{Adapted pressure ARZ model}\label{sec:88}

Recently, a new adapted pressure (AP) ARZ model and a new junction condition were proposed in \cite{gottlich2021second}, allowing for the assignment of different pressure functions for traffic before and after passing through a junction. 
The new model and the corresponding junction condition satisfy the conservation of mass and generalized momentum, and are consistent with the Lagrangian/microscopic forms. 
The AP ARZ model can be written as
\begin{equation}\label{system}
	\partial_t \BU + \partial_x \BF(\BU) = {\bf 0},
\end{equation}
where the conservative variable $\BU$ and the flux function $\BF$ are given by $\BU = (\rho, y, z)^\top$, $\BF(\BU) = v \BU$ with the density $\rho$ and the generalized momentum $y$. The ``driver behavior marker'' $w$ and velocity $v$ are respectively defined by
$w = w(\BU) := y/\rho$ and $v = v(\BU) := w - p(\rho, z)$.
Here, the pressure $p(\rho, z)$ describes the response of drivers to changes in the density of the vehicle ahead. In this paper, we will consider the following pressure function:
\begin{equation}\label{P}
	p(\rho, z) = \left\{
	\begin{aligned}
		& \frac{v_{\rm ref}}{\gamma} z^\kappa \rho^{\gamma-\kappa}
		=
		\frac{v_{\rm ref}}{\gamma} c^\kappa \rho^\gamma, \; && {\rm if} \; \gamma > 0, \\
		& v_{\rm ref} \log{\rho}, \; && {\rm if} \; \gamma = 0
	\end{aligned}
	\right.
\end{equation}
with a given reference velocity $v_{\rm ref}$ and parameter $\kappa \ge \gamma+1$.
Here, the variable $c = c(\BU) := z/\rho > 0$ is used as a label to distinguish distinct ``driver behaviors'', and such a label is transported passively with traffic flow. The eigenvalues of the Jacobian $\partial \BF/\partial \BU$ are $\lambda_1 = v-\rho p_\rho - z p_z$, $\lambda_2 = \lambda_3 = v$. 
If one sets $c \equiv 1$, then the AP ARZ model \eqref{system} degenerates into the original ARZ model \cite{aw2000resurrection}, and the pressure function \eqref{P} degenerates into the ones adopted in \cite{aw2000resurrection,BagneriniRascle2003,garavello2006traffic,herty2006optimization,haut2007second,chalons2007transport,FlynnKasimovNaveRosalesSeibold2009,betancourt2018random,buli2020discontinuous}.

The Riemann invariants of \eqref{system} are $v$, $w$, and $c$. It holds that the solution to the system \eqref{system} satisfies minimum and maximum principles with respect to these Riemann invariants {\em globally} (see \cite{BagneriniRascle2003}), namely, 
\begin{equation}\label{eq:328}
	\BU(x,t) \in \mathcal{I}^{\rm G},\qquad t > 0,
\end{equation}
where the global invariant domain $\mathcal{I}^{\rm G}$ is defined as
\begin{equation}\label{eq:332}
	\mathcal{I}^{\rm G}
	:= 
	\left\{ 
	\BU
	\in \mathbb{R}^3 
	: 
	v \in [v^{\rm G}_{\min}, v^{\rm G}_{\max}], 
	w \in [w^{\rm G}_{\min}, w^{\rm G}_{\max}],
	c \in [c^{\rm G}_{\min}, c^{\rm G}_{\max}] 
	\right\}
\end{equation}
with the global upper and lower bounds of Riemann invariants defined as
\begin{gather*}
	v^{\rm G}_{\min} := \min_x v(x,0), \quad w^{\rm G}_{\min} := \min_x w(x,0), \quad c^{\rm G}_{\min} := \min_x c(x,0), \\
	v^{\rm G}_{\max} := \max_x v(x,0), \quad w^{\rm G}_{\max} := \max_x w(x,0), \quad c^{\rm G}_{\max} := \max_x c(x,0).
\end{gather*}

The hyperbolic system \eqref{system} exhibits finite propagation speed, therefore, one may extend the global bound \eqref{eq:328} to a local version,
$\BU(x,t) \in \mathcal{I}^{\rm L}(x,t,t_0)$, $t > t_0 \geq 0$,
where the local invariant domain $\mathcal{I}^{\rm L}(x,t,t_0)$ is defined as
\begin{equation}\label{eq:154}
	\mathcal{I}^{\rm L}(x,t,t_0)
	:=
	\left\{ 
	\BU= \left( \rho, y, z \right)^\top \in \mathbb{R}^3 
	: 
	\renewcommand\arraystretch{1.2}
	\begin{array}{r}
		v \in [v^{\rm L}_{\min}(x,t,t_0), v^{\rm L}_{\max}(x,t,t_0)] \\  
		w \in [w^{\rm L}_{\min}(x,t,t_0), w^{\rm L}_{\max}(x,t,t_0)] \\
		c \in [c^{\rm L}_{\min}(x,t,t_0), c^{\rm L}_{\max}(x,t,t_0)] 
	\end{array}
	\right\},
\end{equation}
where the local bounds of Riemann invariants are defined as
\begin{align*}
	v^{\rm L}_{\min}(x,t,t_0) &:= \min_{\xi \in \mathcal{X}} v(\xi,t_0),
	& w^{\rm L}_{\min}(x,t,t_0) &:= \min_{\xi \in  \mathcal{X}} w(\xi,t_0),
	& c^{\rm L}_{\min}(x,t,t_0) &:= \min_{\xi \in \mathcal{X}} c(\xi,t_0), 
	\\
	v^{\rm L}_{\max}(x,t,t_0) &:= \max_{\xi \in  \mathcal{X}} v(\xi,t_0),
	& w^{\rm L}_{\max}(x,t,t_0) &:= \max_{\xi \in  \mathcal{X}} w(\xi,t_0),
	& c^{\rm L}_{\max}(x,t,t_0) &:= \max_{\xi \in \mathcal{X}} c(\xi,t_0).
\end{align*}
Here, the local domain of determination is defined as $\mathcal{X}:=[x-\alpha(t-t_0),x+\alpha(t-t_0)]$, and $\alpha$ denotes the maximum propagation speed. Please note that $\mathcal{I}^{\rm L} \subseteq \mathcal{I}^{\rm G}$.

\subsection{DG methods}

Solutions to the system \eqref{system}, as a nonlinear hyperbolic conservation law, may develop discontinuities in finite time, even from smooth initial conditions. The discontinuous nature of the solutions can cause spurious oscillations in numerical results, particularly in high-order schemes, leading to nonphysical wave structures or numerical instability. Therefore, when solving the system \eqref{system} numerically, it is crucial to choose appropriate numerical methods that can resolve discontinuous structures with high resolution and are free of spurious oscillations.

The discontinuous Galerkin (DG) method is a class of finite element methods for solving hyperbolic conservation laws. These methods employ discontinuous piecewise polynomial basis functions and offer several advantages, including local conservation, stencil compactness, simple boundary condition implementation, and $hp$-adaptivity. 
The DG methods have gained widespread popularity in high-performance computing applications, especially for large-scale problems. For parallel implementations, the DG methods have demonstrated parallel efficiencies of over 99\% for fixed meshes \cite{BiswasDevineFlaherty1994}. These features make the DG methods particularly promising candidates for numerically solving traffic flow simulations on complex road networks. 
To mitigate oscillations in DG methods, several strategies have been proposed. The first strategy employs limiters, such as total variation diminishing, total variation bounded, and weighted essentially non-oscillatory limiters; see, e.g., \cite{zhong2013simple,shu2009discontinuous,qiu2005runge}. The second strategy adds artificial viscosity terms with second or higher-order spatial derivatives to diffuse oscillations  \cite{hiltebrand2014entropy,huang2020adaptive,yu2020study,zingan2013implementation}.  
Recently, \cite{liu2022essentially,lu2021oscillation} introduced the essentially oscillation-free DG methods, which incorporate local damping terms to suppress spurious oscillations. 
Inspired by these works, Peng, Sun, and Wu \cite{peng2023oedg} developed a novel damping mechanism and the oscillation-eliminating DG (OEDG) methods, which are notable for their non-intrusive, scale- and evolution-invariant characteristics.
In addition to these advantages, the OEDG methods remain stable with normal time step sizes and do not require a characteristic decomposition procedure, thereby exhibiting remarkable simplicity and efficiency. 

\subsection{Bound-preserving schemes}\label{sec:195}
Besides the common characteristic of containing discontinuous structures, the solutions to hyperbolic conservation laws typically satisfy certain bounds or constraints, such as the constraints \eqref{eq:328} for system \eqref{system}. It is highly desirable, or even crucial, for numerical schemes to preserve these bounds, resulting in bound-preserving (BP) numerical methods. 
In recent decades, research on BP schemes for hyperbolic equations has gained considerable attention. This includes schemes that preserve minimum/maximum principles \cite{zhang2010maximum,xu2014parametrized,Lv2014EntropyboundedDG,AndersoDobrevKolevKuzminQuezadaRiebenTomov2017}, positivity \cite{zhang2011maximum,wu2018positivity,WuShu2019}, and other types of bounds \cite{wu2021minimum,WuTang2015,wu2017design,mabuza2018local,Dmitri2021}. Zhang and Shu developed a general framework for constructing high-order BP DG and finite-volume schemes in  \cite{zhang2011maximum,zhang2010maximum}.
Designing BP numerical schemes becomes significantly more demanding when the bounds to be preserved exhibit very complicated explicit formulas involving high nonlinearity, not to mention cases where the bounds can only be implicitly defined. To address this challenge, Wu and Shu \cite{wu2023geometric} introduced the geometric quasilinearization (GQL) framework, offering an effective new approach for BP analysis and design with nonlinear and complicated constraints.

For the original ARZ model, designing BP numerical schemes is of great importance but rather challenging. The main difficulty lies in the fact that its invariant domain is not convex, making most aforementioned BP techniques inapplicable. By combining the Godunov method with a random sampling strategy, Chalons and Goatin \cite{chalons2007transport} developed a first-order quasi-conservative random method for the original ARZ model that satisfies the \( v \) maximum principle, namely, \( v \le v_{\max} \). Thanks to the satisfaction of the \( v \) maximum principle, this method resolves the contact waves sharply and is free of nonphysical velocity overshoots, which are often observed in numerical results obtained using classic schemes, such as Godunov and Lax–Friedrichs (LF) schemes. 
While this random BP scheme is not strictly conservative, a natural question arises: Is it possible to design a strictly conservative BP method for the ARZ model that preserves \( v \le v_{\max} \)? By constructing a counter-example, Proposition 4.1 of \cite{betancourt2018random} provided a clear negative answer: no strictly conservative first-order scheme with a consistent two-point numerical flux can satisfy the \( v \) maximum principle. 
Considering the AP ARZ model as a generalization of the ARZ model, this conflict between satisfying \( v \le v_{\max} \) and maintaining conservation/consistency also applies. This leads to the new question: Is it possible to design BP schemes for the original and the AP ARZ models that satisfy the constraints in \eqref{eq:332} except \( v \le v_{\max} \), but adhere to another suitable upper bound for \( v \)?

In this paper, we develop BP-OEDG schemes for the original and the AP ARZ type models based on LF numerical flux, which preserve the maximum principles of $w$ and $c$, the minimum principle of $v$, and the positivity of $\rho$.
When designing BP numerical schemes with the LF flux preserving invariant domain $\mathcal{I}$ for hyperbolic conservation laws, the following property \cite{zhang2010b,WuTang2015} is often desired and employed as the core of BP analysis:
\begin{equation}\label{eq:309}
	\forall \, \BU \in \mathcal{I}
	\; \Longrightarrow \;
	\mathcal{S}^\pm_{\tt LF}(\BU) := \BU \pm \frac{\BF({\BU})}{\alpha} \in \mathcal{I},
\end{equation}
where $\alpha$ denotes an estimate of maximum propagation speed, and the above statement \eqref{eq:309} is typically called the LF splitting property \cite{WuTang2015} for $\mathcal{I}$.
Unfortunately, the statement \eqref{eq:309} does not generally hold for the system \eqref{system}--\eqref{P}, making the task of designing BP numerical schemes rather nontrivial.
To overcome this difficulty, we investigate and prove the following generalized statement:
\begin{equation}\label{eq:318}
	\forall \, \BU_1, \BU_2 \in \mathcal{I}
	\; \Longrightarrow \; 
	\mathcal{S}_{\tt GLF}(\BU_1,\BU_2) := 
	\frac12
	\left[
	\BU_1 + \frac{\BF({\BU_1})}{\alpha}
	+
	\BU_2 - \frac{\BF({\BU_2})}{\alpha}
	\right]
	\in \mathcal{I},
\end{equation}
which is referred to as the generalized  LF splitting property in the literature \cite{wu2018positivity}. 
By establishing this generalized property \eqref{eq:318}, we then design a provably BP-OEDG method for the original and the AP ARZ models. 
It is also worth mentioning that, due to the satisfaction of $w$ maximum principle, even though the $v$ maximum principle is not directly addressed in this study, our schemes effectively enforce an alternative upper bound on $v$ values, substantially mitigating the $v$ overshoots, especially in the vicinity of near-vacuum states.
The proposed BP-OEDG schemes are also applicable to the original ARZ model, which can be considered as a degenerate case of the AP ARZ model.

\subsection{Contributions}

The key contributions of this paper include: 
\begin{itemize}[leftmargin=3mm]
	
	\item Through systematical analysis, we rigorously prove several key properties of invariant domains induced by the above constraints, including convexity and the (generalized) LF splitting properties. These theoretical efforts are novel and nontrivial, involving both technical estimates and the GQL approach.
	
	\item Based on these efforts and findings, we develop the first-of-their-kind BP DG schemes for the original and the AP ARZ models, which preserve the maximum principles of all Riemann invariants, the minimum principle of all Riemann invariants except $v$, and the positivity of $\rho$. 
	
	\item Although the $v$ maximum principle is not directly addressed in this work, the strictly enforced maximum principle of $w$ gives an alternative upper bound of $v$ values, which substantially mitigates the velocity overshoots in the numerical results, especially in the vicinity of near-vacuum states. 
	
	\item In addition to the global invariant domains, we also provide a recipe to estimate local invariant domains. As demonstrated in the numerical examples, the locally BP-OEDG schemes are more robust than the globally BP ones, especially when the solution consists of waves with various magnitudes.
	
	\item The non-intrusive scale-invariant OE procedure \cite{peng2023oedg} is incorporated into the proposed DG schemes to suppress spurious oscillations, while maintaining the high-order accuracy of the DG schemes. 
	
\end{itemize}

This paper is structured as follows: 
Section \ref{sec:inv} presents various invariant domains and derive their key properties. 
\Cref{sec:OEDG} introduces the OEDG method for system \eqref{system}. 
Section \ref{sec:BP-OEDG} presents the high-order BP-OEDG schemes and the rigorous proof of their BP property. 
Section \ref{sec:num} presents a series of numerical examples, including applications on single road segment and networks, to demonstrate the accuracy, BP property,  and robustness of our schemes. 
As a graphic summary, a flowchart of our theoretical investigations is depicted in the following \Cref{fig:314}.

\begin{figure}[hb!]
	\centering
	\begin{tikzpicture}[scale = 0.8]
		\node [process, fill=green!20, inner xsep=2pt, inner ysep=2pt] at (-0.5 ,-0.4) (L22) {\footnotesize \textbf{\Cref{lem:466}}: invariant domains $\Omega_0, \Omega_1, \Omega_2$, and $\Omega$ are all convex};
		\node [process, fill=blue!20, inner xsep=2pt, inner ysep=2pt, text width=2.5cm, align=left] at (-6.4  ,-1.8) (L24) {\footnotesize \textbf{\Cref{L1}}: LF splitting property holds for $\Omega_1$};
		\node [process, fill=blue!20, inner xsep=2pt, inner ysep=2pt, text width=2.5cm, align=left] at (-6.4  ,-3.7) (L25) {\footnotesize \textbf{\Cref{lem:647}}: GQL linear equivalence of $\Omega_2$};
		\node [process, fill=blue!20, inner xsep=2pt, inner ysep=2pt, text width=2.9cm, align=left] at (-2.4  ,-1.8) (L27) {\footnotesize \textbf{\Cref{thm:1471}}: generalized LF splitting property holds for $\Omega$};
		\node [process, fill=blue!20, inner xsep=2pt, inner ysep=2pt, text width=2.9cm, align=left] at (-2.4  ,-3.7) (L26) {\footnotesize \textbf{\Cref{L2}}: generalized LF splitting property holds for $\Omega_2$};
		\node [process, fill=blue!20, inner xsep=2pt, inner xsep=2pt, inner ysep=2pt, text width=1.95cm, align=left] at (1.3  ,-2.6) (L410) {\footnotesize \textbf{\Cref{lem:2154}}: BP property of forward-Euler steps};
		\node [process, fill=red!20, inner xsep=2pt, inner ysep=2pt, text width=3.6cm, align=left] at (5.5  ,-1.8) (T411) {\footnotesize \textbf{\Cref{lem:2350}}: the proposed scheme is BP w.r.t. global invariant domain};
		\node [process, fill=red!20, inner xsep=2pt, inner ysep=2pt, text width=3.6cm, align=left] at (5.5  ,-3.7) (T412) {\footnotesize \textbf{\Cref{lem:2394}}: the proposed scheme is BP w.r.t. local invariant domain};
		\begin{pgfonlayer}{background}
			\path [connector] (L24) -- (L27);
			\path [connector] (L25) -- (L26);
			\path [connector] (L26) -- (L27);
			\path [connector] (L27) -- (L410);
			\path [connector] (L410) -- (T411);
			\path [connector] (L410) -- (T412);
			\path[connector2] (L22.west) -| (L24.north);
			\path[connector2] (L22) -| (L410.north);
			\path[connector2] (L22) -| (L27.north);
			\path[connector2] (L22.west) -| ++(-3.3,0) |- (L25.west);
			\path[connector2] (L22.east) -| (T411.north);
			\path[connector2] (L22.east) -| ++( 4.1,0) |- (T412.east);
			\path[connector2] (L22.west) -| ++(-3.3,-4.5) -| (L26.south);
		\end{pgfonlayer}
	\end{tikzpicture}
	\caption{\sf Flowchart of key theoretical findings towards the BP property of proposed schemes.}
	\label{fig:314}
\end{figure}
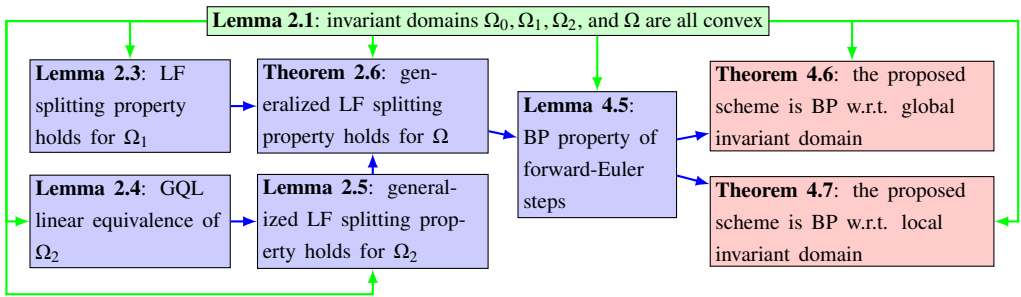

\section{Invariant domains and their properties}\label{sec:inv}
In this paper, we consider the following invariant domains of the system \eqref{system}:
\begin{align}
	\Omega_0 &:= \left\{ \BU  \in \mathbb{R}^3 : \rho > 0, \; z > 0 \right\}, \nonumber \\
	\Omega_1 &:= \left\{ \BU  \in \mathbb{R}^3 : \rho > 0, \; w \in [w_{\min}, w_{\max}], \; c \in [c_{\min}, c_{\max}] \right\}, 
	\nonumber \\
	\Omega_2 &:= \left\{ \BU  \in \mathbb{R}^3 : \rho > 0, \; z > 0, \; v \geq v_{\min} \right\}, \label{eq:O2} \\
	\Omega   &:= \Omega_1 \cap \Omega_2
	= 
	\left\{
	\BU  \in \mathbb{R}^3
	:
	\begin{array}{r}
		\rho > 0, z > 0, w \in \left[ w_{\min}, w_{\max} \right] \\
		v \ge v_{\min},	c \in \left[ c_{\min}, c_{\max} \right]
	\end{array}
	\right\}.  \label{eq:270}
\end{align}
Towards designing BP-OEDG schemes for the system \eqref{system}, we will derive several key properties of these invariant domains, as outlined in \Cref{fig:314}.

\begin{remark}\label{rmk:435}
	Although the constraint $v \le v_{\max}$ is not directly addressed, the considered constraint $w \le w_{\max}$ actually implies another upper bound for $v$:
	\[
	v = w-p(\rho,z) \le w_{\max} - p(\rho,z)
	\le
	\begin{dcases}
		w_{\max} 
		& 
		{\rm if }~ \gamma > 0,
		\\
		w_{\max} - v_{\rm ref} \log (\min_x \rho)
		& {\rm if }~ \gamma = 0.
	\end{dcases}
	\]
	As one will see in the numerical experiments, this upper bound on $v$ can also help mitigate overshoots in $v$ in the numerical results. 
\end{remark}

\begin{lemma}\label{lem:466}
	The invariant domains $\Omega_0$, $\Omega_1$, $\Omega_2$, and $\Omega$ are all convex sets.
\end{lemma}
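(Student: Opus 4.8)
The plan is to treat the four domains in order of increasing difficulty: dispatch the two that admit a linear (half-space) representation, isolate the single genuinely nonlinear obstruction in $\Omega_2$, and then obtain $\Omega$ for free as an intersection. The guiding principle throughout is that the constraint $\rho>0$ lets me clear denominators and convert the Riemann-invariant constraints into tractable forms.

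First I would handle $\Omega_0$ and $\Omega_1$. The set $\Omega_0=\{\rho>0,\,z>0\}$ is the intersection of two open half-spaces in $\mathbb{R}^3$ (the variable $y$ being free), hence convex. For $\Omega_1$, I use $\rho>0$ to linearize: $w=y/\rho\in[w_{\min},w_{\max}]$ is equivalent to the affine constraints $w_{\min}\rho\le y\le w_{\max}\rho$, and $c=z/\rho\in[c_{\min},c_{\max}]$ to $c_{\min}\rho\le z\le c_{\max}\rho$. Together with $\rho>0$ these are all half-space constraints in $\BU=(\rho,y,z)^\top$, so $\Omega_1$ is an intersection of half-spaces and therefore convex.

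The crux is $\Omega_2$. Using $\rho>0$ once more, the constraint $v=y/\rho-p(\rho,z)\ge v_{\min}$ is equivalent to $y\ge\phi(\rho,z)$, where $\phi(\rho,z):=v_{\min}\rho+\rho\,p(\rho,z)$. Consequently, over the convex domain $\{\rho>0,\,z>0\}$, the set $\Omega_2$ is exactly the epigraph $\{y\ge\phi(\rho,z)\}$, which is convex if and only if $\phi$ is convex. Since $v_{\min}\rho$ is affine, this reduces to convexity of the single function $(\rho,z)\mapsto\rho\,p(\rho,z)$ on the positive quadrant, which is the main technical obstacle of the whole lemma.

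To clear this obstacle I would split on $\gamma$. When $\gamma=0$ we have $\rho\,p=v_{\rm ref}\,\rho\log\rho$, independent of $z$, with $\partial_\rho^2(\rho\log\rho)=1/\rho>0$, so $\phi$ is convex (assuming the physical condition $v_{\rm ref}>0$). When $\gamma>0$, the function is the monomial $\rho\,p=\frac{v_{\rm ref}}{\gamma}\,\rho^{\,\gamma+1-\kappa}z^{\kappa}=:C\rho^{b}z^{a}$ with $C=v_{\rm ref}/\gamma>0$, $a=\kappa>1$, and $b=\gamma+1-\kappa\le 0$ by the hypothesis $\kappa\ge\gamma+1$. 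I would then verify positive semidefiniteness of its $2\times2$ Hessian: the diagonal entries carry the factors $b(b-1)\ge 0$ and $a(a-1)>0$ (correct signs because $b\le0$ and $a>1$), while a direct computation gives $\det H\propto ab(1-a-b)$, and since $a+b=\gamma+1$ this equals $-\gamma\,\kappa(\gamma+1-\kappa)\ge0$ — nonnegative precisely because $\kappa\ge\gamma+1$. Hence $H\succeq0$, $\phi$ is convex, and $\Omega_2$ is convex. Finally $\Omega=\Omega_1\cap\Omega_2$ is convex as an intersection of convex sets. The only delicate point is the sign bookkeeping in the monomial Hessian, where the structural assumption $\kappa\ge\gamma+1$ is exactly what is needed to force the determinant term to be nonnegative; everything else is routine half-space and epigraph reasoning.
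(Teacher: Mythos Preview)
Your proof is correct and follows essentially the same route as the paper: for $\Omega_2$ both arguments rewrite $v\ge v_{\min}$ as the epigraph condition $y\ge v_{\min}\rho+\rho\,p(\rho,z)$ and verify convexity of $\rho\,p(\rho,z)$ by checking that its Hessian is positive semidefinite, with the assumption $\kappa\ge\gamma+1$ used exactly where you indicate. Your treatment of $\Omega_1$ via the half-space description $w_{\min}\rho\le y\le w_{\max}\rho$, $c_{\min}\rho\le z\le c_{\max}\rho$ is a slight streamlining of the paper's explicit convex-combination computation, but the underlying idea is identical.
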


\begin{proof}
	{\sf Part I}: 
	The set $\Omega_0$ is clearly convex.
	
	\noindent{\sf Part II}: 
	For any two states $\BU_0 = (\rho_0,y_0,z_0)^\top$ and $\BU_1 = (\rho_1,y_1,z_1)^\top \in \Omega_1$, and any $\theta \in [0,1]$, let us denote $ (\rho_\theta, y_\theta, z_\theta)^\top = (1 - \theta) \BU_0 + \theta \BU_1$. In the following, we will prove that $\BU_\theta \in \Omega_1$. 
	First, the positivity of $\rho_\theta$ can be obtained by
	$\rho_\theta = (1 - \theta) \rho_0 + \theta \rho_1 \ge \min\{\rho_1,\rho_0\} > 0$.
	Since
	\begin{align*}
		w_\theta
		=
		\frac{y_\theta}{\rho_\theta}
		=
		\frac
		{(1 - \theta) y_0 + \theta y_1}
		{(1 - \theta) \rho_0 + \theta \rho_1}
		&
		=
		\left(
		\frac{(1 - \theta) \rho_0}{(1 - \theta) \rho_0 + \theta \rho_1}
		\right)
		w_0
		+
		\left(
		\frac
		{\theta \rho_1}
		{(1 - \theta) \rho_0 + \theta \rho_1}
		\right)
		w_1,
		\\
		c_\theta
		=
		\frac{z_\theta}{\rho_\theta}
		=
		\frac
		{(1 - \theta) z_0 + \theta z_1}
		{(1 - \theta) \rho_0 + \theta \rho_1}
		& 
		=
		\left(
		\frac{(1 - \theta) \rho_0}{(1 - \theta) \rho_0 + \theta \rho_1}
		\right)
		c_0
		+
		\left(
		\frac
		{\theta \rho_1}
		{(1 - \theta) \rho_0 + \theta \rho_1}
		\right)
		c_1,
	\end{align*}
	namely, $w_\theta$ (resp. $c_\theta$) is a convex combination of $w_0$ and $w_1$ (resp. $c_0$ and $c_1$), we have $w_\theta \in [w_{\min},w_{\max}]$ (resp. $c_\theta \in [c_{\min},c_{\max}]$). Thus, $\BU_\theta \in \Omega_1$ and $\Omega_1$ is convex. \\
	{\sf Part III}: Now, let us assume $\BU_0$ and $\BU_1 \in \Omega_2$. The positivity of $z_\theta$ can be obtained by 
	$z_\theta = (1 - \theta) z_0 + \theta z_1 \ge \min\{z_1,z_0\} > 0$.
	Additionally, we need to show $v_\theta \ge v_{\min}$.
	Consider a function of $\rho$ and $z$ defined as
	\begin{equation}\label{eq:406}
		\eta(\rho,z)
		:=
		\rho (v_{\min} + p(\rho, z))
		=
		y+\rho(v_{\min}-v)
		,
	\end{equation}
	such that for state $\BU = (\rho,y,z)^\top$, the condition $v(\BU) \ge v_{\min}$ is equivalent to $y \ge \eta(\rho,z)$. Since $y_\theta \ge \eta(\rho_\theta,z_\theta)$ for $\theta = 0$ and $1$, in order to prove $v_\theta \ge v_{\min}$, or equivalently, $y_\theta \ge \eta(\rho_\theta,z_\theta)$, we only need to show that the function $\eta(\rho,z)$ is convex for $(\rho,z) \in \mathbb{R}_+ \times \mathbb{R}_+$. To this end, we will show that its Hessian matrix $\mathbf{H}$ is positive semi-definite.
	In the case of $\gamma = 0$, the Hessian matrix is given by
	\(
	\mathbf{H}
	= {\rm diag}\{ v_{\rm ref}/\rho, 0  \}, 
	\)
	which is clearly positive semi-definite.
	In the case of $\gamma > 0$,
	\[
	\mathbf{H}
	=
	\frac{v_{\rm ref}}{\gamma}z^{\kappa}\rho^{\gamma-\kappa-1}
	\left(\begin{array}{cc} 
		(\kappa-\gamma)(\kappa-\gamma-1) z^2
		& -\kappa(\kappa-\gamma-1)z\rho \\ 
		-\kappa(\kappa-\gamma-1)z\rho
		& \kappa (\kappa-1) \rho^2
	\end{array}\right),
	\]
	and the determinants of its principle minors are given by
	\begin{gather*}
		\mathbf{H}_{11}
		= 
		\frac{v_{\rm ref}}{\gamma}z^{\kappa+2}\rho^{\gamma-\kappa-1}
		(\kappa-\gamma)(\kappa-\gamma-1)
		\ge 0,
		\quad
		\mathbf{H}_{22}
		= 
		\frac{v_{\rm ref}}{\gamma}z^{\kappa}\rho^{\gamma-\kappa+1}
		\kappa (\kappa-1) 
		\ge 0,
		\\
		|\mathbf{H}|
		=
		\frac{v_{\rm ref}}{\gamma}z^{\kappa+2}\rho^{\gamma-\kappa-1}
		\left[
		(\gamma-\kappa)(\gamma-\kappa+1) \kappa(\kappa-1)-\kappa^2(\gamma-\kappa+1)^2
		\right]
		\ge 0.
	\end{gather*}
	Thus, we conclude that $\BU_\theta \in \Omega_2$ and $\Omega_2$ is convex. \\
	{\sf Part IV}: Since both $\Omega_1$ and $\Omega_2$ are convex, their intersection $\Omega$ is also convex.
\end{proof}

\begin{lemma}\label{lem:791}
	Consider a function $h(\rho_*,z_*;\rho,z)$ for positive $\rho_*$, $z_*$, $\rho$, and $z$:
	\[
	h(\rho_*,z_*;\rho,z)
	:=
	\rho \, \Big[ p(\rho, z) - p(\rho_*, z_*) \Big]
	+
	\rho_*\left[
	\left(\rho_*-\rho\right) p_{\rho} (\rho_*, z_*)
	+
	\left(z_*-z\right) p_{z} (\rho_*, z_*)
	\right].
	\]
	It holds that 
	$h(\rho_*,z_*;\rho,z) \ge h(\rho,z;\rho,z) = 0$.
\end{lemma}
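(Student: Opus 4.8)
The plan is to recognize $h$ as a Bregman-type gap — the vertical distance between a convex function and one of its tangent planes — so that its nonnegativity follows immediately from convexity, which has essentially already been established in \Cref{lem:466}.

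First I would introduce the auxiliary function $\Phi(\rho,z) := \rho\, p(\rho,z)$ on $\mathbb{R}_+\times\mathbb{R}_+$, with gradient components $\Phi_\rho = p + \rho p_\rho$ and $\Phi_z = \rho p_z$. A direct (and entirely routine) expansion then shows that
\[
h(\rho_*,z_*;\rho,z)
=
\Phi(\rho,z)
-
\Big[
\Phi(\rho_*,z_*)
+
\Phi_\rho(\rho_*,z_*)\,(\rho-\rho_*)
+
\Phi_z(\rho_*,z_*)\,(z-z_*)
\Big],
\]
i.e.\ $h$ is exactly the gap at $(\rho,z)$ between $\Phi$ and its first-order Taylor polynomial (supporting hyperplane) based at $(\rho_*,z_*)$. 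Taking $(\rho_*,z_*)=(\rho,z)$ makes this gap vanish, which already yields the stated identity $h(\rho,z;\rho,z)=0$.

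Next I would argue that $\Phi$ is convex on $\mathbb{R}_+\times\mathbb{R}_+$. The key observation is that $\Phi$ differs from the function $\eta$ in \eqref{eq:406} only by a linear term: $\eta(\rho,z) = v_{\min}\rho + \Phi(\rho,z)$. Since a linear term contributes nothing to second derivatives, $\Phi$ and $\eta$ share the same Hessian $\mathbf{H}$, which was already shown to be positive semi-definite — in both the $\gamma=0$ and $\gamma>0$ cases — in Part III of the proof of \Cref{lem:466}. Hence $\Phi$ is convex.

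Finally, convexity of $\Phi$ delivers the supporting-hyperplane inequality $\Phi(\rho,z) \ge \Phi(\rho_*,z_*) + \nabla\Phi(\rho_*,z_*)\cdot\big((\rho,z)-(\rho_*,z_*)\big)$ for all positive arguments, which is precisely $h(\rho_*,z_*;\rho,z)\ge 0$. I do not anticipate a genuine obstacle: the only real content is spotting the tangent-plane/Bregman structure of $h$ and reusing the Hessian computation already carried out for \Cref{lem:466}. If a self-contained argument were preferred, one could instead reprove positive semi-definiteness of $\mathbf{H}$ directly from the monomial form $\Phi = \tfrac{v_{\rm ref}}{\gamma}\,z^{\kappa}\rho^{\gamma-\kappa+1}$ using the hypothesis $\kappa \ge \gamma+1$, but invoking \Cref{lem:466} is cleaner.
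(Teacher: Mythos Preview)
Your proposal is correct and takes a genuinely different route from the paper.

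You recognise $h$ as the Bregman divergence of $\Phi(\rho,z)=\rho\,p(\rho,z)$: the gap between $\Phi$ at $(\rho,z)$ and the tangent plane of $\Phi$ at $(\rho_*,z_*)$. Since $\Phi$ differs from the function $\eta$ in \eqref{eq:406} only by the linear term $v_{\min}\rho$, it inherits the same Hessian, whose positive semi-definiteness was already verified in Part~III of the proof of \Cref{lem:466}; nonnegativity of $h$ then drops out of the supporting-hyperplane inequality in one line. The paper, by contrast, does a direct two-variable calculus minimisation: for fixed $(\rho,z)$ it differentiates $h$ in $z_*$, locates the minimiser $z_\star$ explicitly from the monomial structure of $p$, evaluates the reduced function $\tilde h(\rho_*;\rho,z)$, and then minimises again in $\rho_*$ to reach zero at $\rho_*=\rho$ (with a separate, simpler computation for $\gamma=0$). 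Your argument is shorter, more conceptual, and recycles work already done; the paper's argument is more self-contained and never invokes \Cref{lem:466}, at the cost of case-by-case calculation tied to the specific pressure law~\eqref{P}.
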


\begin{proof}
	Note that
	\[
	h_{z_*}(\rho_*, z_*;\rho,z)
	= 
	\rho_*\left[
	\left(\rho_*-\rho\right) p_{\rho z}(\rho_*, z_*)
	+
	(z_*-z) p_{zz}(\rho_*, z_*)
	+
	p_z(\rho_*, z_*)
	\right]
	-\rho p_z(\rho_*, z_*) 
	\]
	We first consider the case of $\gamma > 0$. In this case,
	\[
	h_{z_*}(\rho_*, z_*;\rho,z)
	= 
	\frac{\kappa v_{\rm ref}}{\gamma} z_*^{\kappa-2} \rho_*^{\gamma-\kappa}
	\Big[
	\gamma \rho_*z_*
	+
	(\kappa-\gamma-1) \rho z_*
	-
	(\kappa-1) \rho_* z
	\Big],
	\]
	which, due to $\kappa \ge \gamma+1$ and the positivities of $\rho_*$, $z_*$, $\rho$, and $z$, implies that
	\[
	\begin{dcases}
		h_{z_*}(\rho_*, z_*;\rho,z) \ge 0
		&
		{\rm if} ~  z_* \ge \frac{(\kappa-1) \rho_* z}{\gamma \rho_* + (\kappa-\gamma-1) \rho} =: \rz,
		\\
		h_{z_*}(\rho_*, z_*;\rho,z) < 0
		&
		{\rm otherwise}.
	\end{dcases}
	\]
	Therefore,
	\begin{align*}
		h(\rho_*, z_*;\rho,z)
		\ge 
		h\left(\rho_*, \rz;\rho,z\right)
		= \rho p (\rho, z) - \frac{v_{\rm ref}}{\gamma} z \, \rho_*^{\gamma - \kappa + 1} \rz^{\kappa - 1}
		=:	\tilde{h}(\rho_*;\rho,z).
	\end{align*}
	Taking partial derivative of $\tilde{h}(\rho_*;\rho,z)$ with respect to $\rho_*$ gives
	\[
	\tilde h_{\rho_*}(\rho_*;\rho,z)
	=
	v_{\rm ref} \frac{\kappa - \gamma - 1}{\kappa - 1} 
	(\rho_* -\rho) \, \rho_* ^{\gamma -\kappa -1} \rz^\kappa,
	\]
	which implies $\tilde h(\rho_*;\rho,z)$ is increasing with respect to $\rho_*$ if $\rho_* \ge \rho$, and decreasing if $\rho_* < \rho$.
	Therefore, $\tilde h(\rho_*;\rho,z)	\ge \tilde h(\rho;\rho,z)$, and we finally have
	\[
	h(\rho_*,z_*;\rho,z) 
	\ge \tilde h(\rho_*;\rho,z)
	\ge \tilde h(\rho;\rho,z)
	=
	h(\rho,z;\rho,z) = 0.
	\]
	In the remaining case of $\gamma = 0$, $p(\rho,z) = v_{\rm ref} \log \rho$, and
	\[
	h(\rho_*,z_*;\rho,z)
	=
	v_{\rm ref} \left(
	\rho \log \rho - \rho \log \rho_* + \rho_* - \rho
	\right)
	=: \hat{h}(\rho_*;\rho).
	\]
	Taking derivative with respect to $\rho_*$ yields
	$\hat{h}_{\rho_*}(\rho_*;\rho)=v_{\rm ref}(\rho_*-\rho)/\rho_*$, indicating $\hat{h}(\rho_*;\rho)$ is increasing with respect to $\rho_*$ if $\rho_* \ge \rho$, and decreasing if $\rho_* < \rho$.
	Therefore, in the case of $\gamma = 0$, we conclude the proof by
	\[
	h(\rho_*,z_*;\rho,z) \ge \hat{h}(\rho_*;\rho) \ge \hat{h}(\rho;\rho) = h(\rho,z;\rho,z) = 0.
	\]
\end{proof}

\begin{lemma}
	\label{L1}
	Assume that $\BU = (\rho,y,z)^\top \in \Omega_1$, then the LF splitting form $\hBU_\pm := \mathcal{S}^\pm_{\tt LF}(\BU) = \BU \pm \alpha^{-1} \BF (\BU) \in \Omega_1$ provided that $\alpha > v(\BU) > 0$.
\end{lemma}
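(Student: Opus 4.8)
The plan is to exploit the special algebraic structure of the flux, namely $\BF(\BU) = v\BU$, which collapses the LF splitting into a simple scalar rescaling of $\BU$. First I would write
\[
\hBU_\pm = \BU \pm \frac{\BF(\BU)}{\alpha} = \BU \pm \frac{v(\BU)}{\alpha}\,\BU = \left(1 \pm \frac{v(\BU)}{\alpha}\right)\BU,
\]
so that $\hBU_\pm = \mu_\pm \BU$ with scalar factors $\mu_\pm := 1 \pm v(\BU)/\alpha$. This reduces the whole question to whether a positive scalar multiple of $\BU$ remains in $\Omega_1$.

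Next I would verify that both factors are strictly positive. Since the hypothesis gives $\alpha > v(\BU) > 0$, we have $0 < v(\BU)/\alpha < 1$, whence $\mu_+ > 1 > 0$ and $0 < \mu_- < 1$; in particular $\mu_\pm > 0$. The only place the full hypothesis $\alpha > v(\BU)$ is needed is here, to keep $\mu_-$ positive.

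The final step is to check the three defining constraints of $\Omega_1$ for $\hBU_\pm = (\mu_\pm \rho, \mu_\pm y, \mu_\pm z)^\top$. The positivity of density is immediate, since $\mu_\pm \rho > 0$ as a product of positive numbers. For the remaining two constraints, the crucial observation is that the Riemann invariants $w = y/\rho$ and $c = z/\rho$ are homogeneous of degree zero, hence invariant under the common scaling:
\[
\frac{\mu_\pm y}{\mu_\pm \rho} = \frac{y}{\rho} = w,
\qquad
\frac{\mu_\pm z}{\mu_\pm \rho} = \frac{z}{\rho} = c.
\]
Since $\BU \in \Omega_1$ already guarantees $w \in [w_{\min}, w_{\max}]$ and $c \in [c_{\min}, c_{\max}]$, these membership conditions are inherited verbatim by $\hBU_\pm$, which completes the argument.

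There is essentially no hard step here once the structure is recognized: the entire lemma hinges on seeing that $\BF(\BU) = v\BU$ makes the LF operator a positive homothety, under which the scale-invariant constraints defining $\Omega_1$ are trivially preserved. I expect the only subtlety to be the bookkeeping on the sign of $\mu_-$, which is exactly why the bound $\alpha > v(\BU)$ appears in the statement; without it, the minus-branch could flip the sign of the density and leave the positivity cone. It is worth emphasizing in contrast that this clean argument will \emph{not} extend to the full domain $\Omega$, because the constraint $v \ge v_{\min}$ is not scale-invariant, which is precisely why the later results require the GQL machinery rather than this elementary rescaling.
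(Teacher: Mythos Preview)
Your proposal is correct and takes essentially the same approach as the paper: both recognize that $\hBU_\pm = (1\pm v/\alpha)\BU$ is a positive scalar multiple of $\BU$, so the density stays positive and the ratios $w=y/\rho$ and $c=z/\rho$ are unchanged. The paper writes out each constraint $\hat w_\pm - w_{\min}$, $w_{\max}-\hat w_\pm$, etc., explicitly, whereas you invoke degree-zero homogeneity directly; the content is identical.
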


\begin{proof}
	Let $\hBU_\pm = (\hat{\rho}_\pm,\hat{y}_\pm,\hat{z}_\pm)^\top$, $\hat{v}_\pm = v(\hBU_\pm)$, $\hat{w}_\pm = w(\hBU_\pm)$, and $\hat{c}_\pm = c(\hBU_\pm)$.
	Because $\alpha > v(\BU) > 0$, we have $\hat{\rho}_\pm = \left( 1 \pm v(\BU)/\alpha \right) \rho > 0$, and
	\begin{equation*}
		\begin{aligned}
			\hat{w}_\pm - w_{\min} 
			& = {(\hat{y}_\pm - \hat{\rho}_\pm \, w_{\min})}/{\hat{\rho}_\pm} 
			= \left( 1 \pm {v(\BU)}/{\alpha} \right) \left( {\rho}/{\hat{\rho}_\pm}\right) (w - w_{\min}) \geq 0, 
			\\
			w_{\max} - \hat{w}_\pm 
			& = {(\hat{\rho}_\pm \, w_{\max} - \hat{y}_\pm)}/{\hat{\rho}_\pm} 
			= \left( 1 \pm {v(\BU)}/{\alpha} \right) \left( {\rho}/{\hat{\rho}_\pm} \right) (w_{\max} - w) \geq 0, 
			\\
			\hat{c}_\pm - c_{\min}  
			& = {(\hat{z}_\pm - \hat{\rho}_\pm \, c_{\min})}{\hat{\rho}_\pm} 
			= \left( 1 \pm {v(\BU)}/{\alpha} \right) \left( {\rho}/{\hat{\rho}_\pm} \right) (c - c_{\min}) \geq 0, 
			\\
			c_{\max} - \hat{c}_\pm 
			& = {(\hat{\rho}_\pm \, c_{\max} - \hat{z}_\pm)}/{\hat{\rho}_\pm} 
			= \left( 1 \pm {v(\BU)}/{\alpha} \right) \left( {\rho}/{\hat{\rho}_\pm} \right) (c_{\max} - c) \geq 0. 
		\end{aligned}
	\end{equation*}
	In summary, $\hBU_\pm \in \Omega_1$. The proof is completed.
\end{proof}

The above \Cref{L1} proves the LF splitting property for $\Omega_1$. However, the constraint $v \ge v_{\min}$ in \eqref{eq:O2} is highly nonlinear, which makes it rather challenging to conduct a similar BP analysis with respect to $\Omega_2$. To overcome this challenge, we adopt the GQL framework from \cite{wu2023geometric} to transform the nonlinear constraint $v \ge v_{\min}$ into equivalent linear constraints by introducing some auxiliary variables, leading to the following GQL representation of $\Omega_2$.

\begin{lemma}[GQL representation of $\Omega_2$]\label{lem:647}
	The set $\Omega_2$ is equivalent to
	\begin{equation*}\label{eq:559}
		\Omega_2^* := \left\{
		\BU \in \mathbb{R}^3 
		: 
		\rho > 0, \; z > 0, \; \BU \cdot \mathbf{n}_* + s_* \geq 0 \quad \forall \; \rho_* > 0, \; z_* > 0 
		\right\},
	\end{equation*}
	where $\mathbf{n}_* := \mathbf{n}(\rho_*,z_*)$, $s_* := s(\rho_*,z_*)$, 
	\begin{equation*}\label{eq:638}
		\begin{aligned}
			\mathbf{n}(\rho,z) &:= - \left(v_{\min} + p (\rho, z) + \rho p_\rho(\rho, z), -1, \rho p_{z} (\rho, z) \right)^\top,
			\\
			s(\rho,z) &:= \rho^2 p_\rho (\rho, z) + \rho z p_z (\rho, z).
		\end{aligned}
	\end{equation*}
\end{lemma}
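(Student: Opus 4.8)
The plan is to prove the set identity $\Omega_2 = \Omega_2^*$ by establishing both inclusions, and the entire argument will hinge on a single algebraic identity that rewrites the \emph{linear} quantity $\BU \cdot \mathbf{n}_* + s_*$ in terms of the \emph{nonlinear} gap $v - v_{\min}$ together with the auxiliary function $h$ of \Cref{lem:791}. This is precisely the mechanism of GQL: the nonlinear constraint $v \ge v_{\min}$ is to be encoded as an infinite family of linear half-space constraints indexed by the auxiliary parameters $(\rho_*, z_*)$, each of which is simple to analyze.

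First I would compute $\BU \cdot \mathbf{n}_* + s_*$ directly from the definitions of $\mathbf{n}(\rho_*,z_*)$ and $s(\rho_*,z_*)$. Writing $p_* = p(\rho_*,z_*)$ and abbreviating the partials accordingly, the dot product collapses, after grouping the $\rho_*$-terms, to
\[
\BU \cdot \mathbf{n}_* + s_*
=
y - \rho v_{\min} - \rho p_* + \rho_* p_\rho(\rho_*,z_*)(\rho_* - \rho) + \rho_* p_z(\rho_*,z_*)(z_* - z).
\]
Adding and subtracting $\rho\, p(\rho,z)$ then splits this expression into two pieces: the first, $y - \rho v_{\min} - \rho\, p(\rho,z) = y - \eta(\rho,z) = \rho(v - v_{\min})$ by the definition of $\eta$ in \eqref{eq:406}; and the second, exactly the function $h(\rho_*,z_*;\rho,z)$ of \Cref{lem:791}. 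This produces the key identity
\[
\BU \cdot \mathbf{n}_* + s_* = \rho(v - v_{\min}) + h(\rho_*,z_*;\rho,z).
\]

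With this identity, both inclusions follow immediately from \Cref{lem:791}, which asserts $h(\rho_*,z_*;\rho,z) \ge h(\rho,z;\rho,z) = 0$ for all positive arguments. For $\Omega_2 \subseteq \Omega_2^*$: if $\BU \in \Omega_2$, then $\rho > 0$, $z > 0$, and $v \ge v_{\min}$, so for every $\rho_* > 0$, $z_* > 0$ the right-hand side is a sum of the nonnegative term $\rho(v - v_{\min})$ and the nonnegative term $h$, whence $\BU \cdot \mathbf{n}_* + s_* \ge 0$ and $\BU \in \Omega_2^*$. For the reverse inclusion $\Omega_2^* \subseteq \Omega_2$: given $\BU \in \Omega_2^*$ with $\rho > 0$, $z > 0$, I would simply select the admissible parameters $(\rho_*,z_*) = (\rho,z)$, for which $h$ vanishes; the defining inequality of $\Omega_2^*$ then reads $0 \le \rho(v - v_{\min})$, and dividing by $\rho > 0$ gives $v \ge v_{\min}$, i.e.\ $\BU \in \Omega_2$.

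The genuinely nontrivial step is the derivation and recognition of the identity above, in tandem with the minimization property packaged in \Cref{lem:791}: it is the fact that the minimum of $h$ over all auxiliary states equals zero and is attained at $(\rho_*,z_*)=(\rho,z)$ that makes the family of linear constraints \emph{sharp} enough to recover the original nonlinear constraint with no slack. All the delicate analytic work concerning the pressure law \eqref{P} and the parameter restriction $\kappa \ge \gamma + 1$ has already been absorbed into \Cref{lem:791}, so once the identity is in hand the remainder is a short two-sided inclusion argument.
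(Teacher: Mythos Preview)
Your proposal is correct and follows essentially the same approach as the paper: you establish the identity $\BU \cdot \mathbf{n}_* + s_* = \rho(v - v_{\min}) + h(\rho_*,z_*;\rho,z)$ and then use \Cref{lem:791} for the forward inclusion and the choice $(\rho_*,z_*)=(\rho,z)$ for the reverse, exactly as the paper does. The only cosmetic difference is that you derive and state the key identity explicitly up front, whereas the paper folds it into the two directions separately.
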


\begin{proof}
	On one hand, if $\BU = (\rho,y,z)^\top \in \Omega_2$, then, $\rho > 0$, $z > 0$. Notice that, for any $\rho_* > 0$ and $z_* > 0$, 
	\begin{equation}\label{eq:632}
		\begin{aligned}
			\BU \cdot \Bn_* + s_* 
			&= 
			y - \rho \big( v_{\min} + p(\rho,z)\big) + h(\rho_*,z_*;\rho,z)
			\\
			&\stackrel{\rm \Cref{lem:791}}{\ge}
			y - \rho \big( v_{\min} + p(\rho,z)\big)
			=
			\rho \big( v(\BU) - v_{\min}\big)
			\ge 0
			,
		\end{aligned}
	\end{equation}
	which implies that $\BU \in \Omega_2^*$.
	On the other hand, if $\BU \in \Omega_2^*$, then $\rho > 0$, $z > 0$.
	Notice that $\BU \cdot \Bn_* + s_* \ge 0$ for any $\rho_* > 0$ and $z_* > 0$. Taking $\rho_* = \rho$ and $z_* = z$ implies
	\[
	\BU \cdot \Bn(\rho,z) + s(\rho,z)
	=
	y - \rho \big( v_{\min} + p(\rho,z)\big)
	=
	\rho \big( v(\BU) - v_{\min}\big)
	\ge 0.
	\]
	Therefore, it holds that $\BU \in \Omega_2$. The proof is completed.
\end{proof}

\begin{lemma}\label{L2}
	If  $\hBU = (\hat{\rho},\hat{y},\hat{z})^\top \in \Omega_2^*$ 
	and $\cBU = (\check{\rho},\check{y},\check{z})^\top \in \Omega_2^*$, then 
	$\hcBU = (\hcr,\hcy,\hcz)^\top$ $:= \mathcal{S}_{\tt GLF}(\hBU,\widecheck\BU)	\in \Omega^*_2	$,
	or equivalently, the inequality
	\begin{equation}\label{eq:656}
		\hcBU \cdot \mathbf{n}_* + s_* \geq 0
	\end{equation}
	holds for any $\rho_* > 0, z_* > 0$, under the condition
	\begin{equation}\label{eq:674}
		\alpha \geq 
		\alpha_{\max}(\hBU,\widecheck{\BU})
		:=
		\max
		\Big\{ 
		\alpha_{\rm std}
		,
		|v(\hBU)|+\overline{s}(\hBU,\widecheck{\BU})/\,\hat{\rho}
		,
		|v(\widecheck{\BU})|+\overline{s}(\hBU,\widecheck{\BU})/\,\check{\rho}\,
		\Big\}.
	\end{equation}
	Here, 
	$\alpha_{\rm std} := \max \{ \overline\alpha(\hBU), \overline\alpha(\widecheck{\BU})\}$, $\overline \alpha(\BU)$ $:= v + |\rho p_{\rho}+z p_z|$ is an estimate of the maximum propagation speed,
	\begin{equation}\label{eq:686}
		\overline{s}(\hBU,\widecheck{\BU})
		:=
		\max_
		{\substack{
				\rho \in [\underline{\rho},\overline{\rho}]
				\\
				z \in [\underline{z},\overline{z}]
			}
		}
		s(\rho,z)
		=
		\begin{dcases}
			v_{\rm ref}
			\overline{z}^{\,\kappa}\,
			\underline{\rho}^{\gamma-\kappa}
			&
			{\rm if}~ \gamma > 0,
			\\
			v_{\rm ref} \, \overline\rho 
			&
			{\rm if}~ \gamma = 0,
		\end{dcases}
	\end{equation}
	\begin{align*}
		\underline{\rho} &:=\min \left\{{\rho}_1,{\rho}_2\right\},&
		\;
		\overline{\rho}  &:= \max	\left\{{\rho}_1,{\rho}_2\right\},&
		\;
		{\rho}_1 &:= \frac{\hat{\rho}+\check{\rho}}{2},&
		\;
		{\rho}_2 &:= {\rho}_1 + \frac{\hat{\rho}\;\hat{v} - \check{\rho}\;\check{v}}{2 \alpha_{\rm std}},
		\\
		\underline{z} &:= \min \left\{z_1,{z}_2\right\},&
		\;
		\overline{z} &:= \max \left\{{z}_1,{z}_2\right\},&
		\;
		{z}_1 &:= \frac{\hat{z}+\check{z} }{2},&
		\;
		{z}_2 &:= z_1 + \frac{\hat{z}\;\hat{v} - \check{z}\;\check{v}}{2 \alpha_{\rm std}}.
	\end{align*}
\end{lemma}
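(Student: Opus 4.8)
The plan is to first collapse the infinitely many linear constraints \eqref{eq:656} into the single nonlinear inequality $v(\hcBU)\ge v_{\min}$, and then to prove the latter by a convexity argument calibrated so that the lower bound \eqref{eq:674} on $\alpha$ supplies exactly the slack that is needed. Writing $\hat v:=v(\hBU)$, $\check v:=v(\cBU)$, $\lambda:=\tfrac{\alpha+\hat v}{2\alpha}$ and $\mu:=\tfrac{\alpha-\check v}{2\alpha}$, I would first note $\lambda,\mu\ge 0$ and $\hcr,\hcz>0$ (immediate from $\alpha\ge\alpha_{\rm std}\ge\max\{\hat v,\check v\}$ and $\alpha\ge|\hat v|$), so that the purely algebraic identity behind \eqref{eq:632} applies verbatim to $\hcBU$. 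Combined with \Cref{lem:791} this gives, for every $\rho_*,z_*>0$, the inequality $\hcBU\cdot\mathbf{n}_*+s_* = \hcy-\hcr\bigl(v_{\min}+p(\hcr,\hcz)\bigr)+h(\rho_*,z_*;\hcr,\hcz)\ge \hcr\bigl(v(\hcBU)-v_{\min}\bigr)$. Hence \eqref{eq:656} follows once I establish $v(\hcBU)\ge v_{\min}$, equivalently $\hcy\ge\eta(\hcr,\hcz)$ with $\eta$ as in \eqref{eq:406}.

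To reach $\hcy\ge\eta(\hcr,\hcz)$, I would use that $\BF(\BU)=v(\BU)\BU$ makes $\hBU+\BF(\hBU)/\alpha=(1+\hat v/\alpha)\hBU$ a pure rescaling, so that $\hcBU=\lambda\hBU+\mu\cBU$ componentwise. Together with the exact identity $\hat y=\eta(\hat\rho,\hat z)+\hat\rho\,(\hat v-v_{\min})$ and its analogue for $\cBU$, and setting $A:=\hat v-v_{\min}\ge 0$, $B:=\check v-v_{\min}\ge 0$, this yields $\hcy=\bigl[\lambda\,\eta(\hat\rho,\hat z)+\mu\,\eta(\check\rho,\check z)\bigr]+\lambda\hat\rho A+\mu\check\rho B$. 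I would then bound the bracket below by the supporting-plane inequality for the convex $\eta$ (\Cref{lem:466}) at $(\hcr,\hcz)$: since $\lambda\hat\rho+\mu\check\rho=\hcr$ and $\lambda\hat z+\mu\check z=\hcz$, the first-order terms collapse to a multiple of $\lambda+\mu-1=\tfrac{\hat v-\check v}{2\alpha}$, and the Euler-type identity $\rho\,\eta_\rho+z\,\eta_z=\eta+s$ (a one-line computation from \eqref{eq:406}) converts the residual into exactly $-\tfrac{\hat v-\check v}{2\alpha}\,s(\hcr,\hcz)$. The upshot is the key estimate $\hcy-\eta(\hcr,\hcz)\ge -\tfrac{\hat v-\check v}{2\alpha}\,s(\hcr,\hcz)+\lambda\hat\rho A+\mu\check\rho B$.

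It then remains to show the right-hand side is nonnegative. Multiplying by $2\alpha>0$, using $2\alpha\lambda=\alpha+\hat v$, $2\alpha\mu=\alpha-\check v$, and $\hat v-\check v=A-B$, the claim becomes $\bigl[(\alpha+\hat v)\hat\rho-s(\hcr,\hcz)\bigr]A+\bigl[(\alpha-\check v)\check\rho+s(\hcr,\hcz)\bigr]B\ge 0$. I would first argue that $(\hcr,\hcz)$ lies in the box $[\underline\rho,\overline\rho]\times[\underline z,\overline z]$: indeed $\hcr-\rho_1=\tfrac{\hat\rho\hat v-\check\rho\check v}{2\alpha}$ and $\rho_2-\rho_1=\tfrac{\hat\rho\hat v-\check\rho\check v}{2\alpha_{\rm std}}$ share the same sign while $|\hcr-\rho_1|\le|\rho_2-\rho_1|$ because $\alpha\ge\alpha_{\rm std}$, so $\hcr$ lies between $\rho_1$ and $\rho_2$ (and likewise $\hcz$ between $z_1,z_2$); hence $s(\hcr,\hcz)\le\overline{s}$ by the definition \eqref{eq:686}. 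The condition $\alpha\ge|\hat v|+\overline{s}/\hat\rho$ in \eqref{eq:674} then forces $(\alpha+\hat v)\hat\rho\ge\overline{s}\ge s(\hcr,\hcz)$, making the coefficient of $A$ nonnegative, while $\alpha\ge\alpha_{\rm std}\ge\check v$ makes the coefficient of $B$ positive; since $A,B\ge 0$, the regrouped expression is nonnegative, which closes the argument. The cases $\gamma>0$ and $\gamma=0$ differ only in the explicit evaluation of $\overline{s}$, which is routine.

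The main obstacle is precisely the sign-indefinite term $-\tfrac{\hat v-\check v}{2\alpha}\,s(\hcr,\hcz)$: a crude use of convexity (discarding the slack $\lambda\hat\rho A+\mu\check\rho B$) only proves the result when $\hat v\le\check v$ and fails otherwise, which is exactly the regime in which the ordinary LF splitting \eqref{eq:309} breaks down for \eqref{system}. The resolution is to retain the exact slack and to control $s(\hcr,\hcz)$ by $\overline{s}$ through the box-containment of $(\hcr,\hcz)$; the auxiliary quantities $\rho_1,\rho_2,z_1,z_2,\overline{s}$ and the bound \eqref{eq:674} are engineered so that this estimate and the resulting sign conditions hold simultaneously. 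Getting this bookkeeping right—above all, spotting the identity $\rho\,\eta_\rho+z\,\eta_z=\eta+s$ that links the convexity residual directly to $s$—is the crux of the proof.
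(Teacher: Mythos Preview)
Your proof is correct and takes a genuinely different route from the paper. The paper stays inside the GQL framework throughout: it first proves, for \emph{every} auxiliary pair $(\rho_*,z_*)$, the flux estimate $\pm\BF(\BU)\cdot\Bn_*\pm v_{\min}s_*\le\bigl(|v|+s_*/\rho\bigr)(\BU\cdot\Bn_*+s_*)$ (using \Cref{lem:791}), combines the $\pm$ versions for $\hBU$ and $\cBU$ into a lower bound on $2(\hcBU\cdot\Bn_*+s_*)$, and only \emph{afterwards} specializes to $(\rho_*,z_*)=(\hcr,\hcz)$ and invokes the box containment $s(\hcr,\hcz)\le\overline{s}$. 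You instead specialize at the outset---reducing \eqref{eq:656} to the single nonlinear inequality $\hcy\ge\eta(\hcr,\hcz)$---and then exploit the supporting-plane inequality for the convex function $\eta$ directly, the key algebraic observation being the Euler-type identity $\rho\,\eta_\rho+z\,\eta_z=\eta+s$, which is what turns the non-affine residual $(\lambda+\mu-1)$ into the explicit term $-\frac{\hat v-\check v}{2\alpha}\,s(\hcr,\hcz)$.

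Both arguments ultimately rest on the same ingredients (convexity of $\eta$, the rescaling structure $\BF(\BU)=v\BU$, and the box estimate for $(\hcr,\hcz)$), but your path is more elementary in that it never invokes the GQL representation as such, and it is in fact slightly sharper: your final regrouping $[(\alpha+\hat v)\hat\rho-s]A+[(\alpha-\check v)\check\rho+s]B\ge 0$ uses only $\alpha\ge|\hat v|+\overline{s}/\hat\rho$ and $\alpha\ge\check v$, so the hypothesis $\alpha\ge|\check v|+\overline{s}/\check\rho$ in \eqref{eq:674} is not actually needed. The paper's symmetric GQL estimate, by contrast, consumes both halves of \eqref{eq:674}. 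What the paper's route buys is methodological coherence with the GQL programme of \cite{wu2023geometric}; what yours buys is a shorter, self-contained computation.
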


\begin{proof}
	For any $\BU = (\rho,y,z)^\top \in \Omega_2^*$, we denote $v = v(\BU) \ge v_{\min}$, then
	\begin{equation}\label{eq:669}
		\begin{aligned}
			& ~~~ \pm \BF(\BU) \cdot \Bn_*
			\pm v_{\min} s_*
			\\
			& = 
			\pm v \, (\BU \cdot \Bn_* + s_*)
			\pm (v_{\min}-v) s_*
			\le 
			|v|\,(\BU \cdot \Bn_* + s_*)
			+\frac{s_*}{\rho}\Big[ \rho (v-v_{\min}) \Big]
			\\
			& \stackrel{\eqref{eq:632}}{\le} 
			\, |v|\,(\BU \cdot \Bn_* + s_*)
			+
			\frac{s_*}{\rho}(\BU \cdot \Bn_* + s_*)
			=
			\left( |v|+\frac{s_*}{\rho} \right)
			(\BU \cdot \Bn_* + s_*)
			,
		\end{aligned}
	\end{equation}
	where the last inequality is due to
	\begin{equation}\label{eq:2966}
		\BU \cdot \Bn_* + s_*
		=
		\rho (v-v_{\min}) + h(\rho_*,z_*;\rho,z)
		\stackrel{\rm \Cref{lem:791}}{\ge}
		\rho (v-v_{\min}).
	\end{equation}
	Let us denote $\hcv = v(\hcBU)$, $\hBF = \BF(\hBU)$, and $\cBF = \BF(\cBU)$, then it implies
	\begin{equation}\label{eq:726}
		\begin{aligned}
			2\hcBU \cdot \Bn_* + 2s_*
			& =
			\left[~
			\hBU + \alpha^{-1} \hBF + \cBU - \alpha^{-1} \cBF 
			~\right] 
			\cdot \Bn_* + 2 s_*
			\\
			& =  
			\hBU \cdot \Bn_* + \cBU \cdot \Bn_*
			+
			\alpha^{-1}
			\left(
			\hBF \cdot \Bn_*
			-
			\cBF \cdot \Bn_*
			+
			v_{\min} s_*
			-
			v_{\min} s_*
			\right)
			+
			2 s_*
			\\
			& =  
			\hBU \cdot \Bn_* + \cBU \cdot \Bn_*	+ 2s_*
			-
			\alpha^{-1} \left( \cBF \cdot \Bn_* + v_{\min} s_* \right)
			-
			\alpha^{-1} \left(-\hBF \cdot \Bn_*	- v_{\min} s_*	 \right)
			\\
			& \stackrel{\eqref{eq:669}}{\ge}  
			\left(\cBU \cdot \Bn_*+s_*\right)
			-
			\alpha^{-1}
			\left( \left|v(\cBU)\right|+\frac{s_*}{\hat{\rho}}\right)
			(\cBU \cdot \Bn_* + s_*)
			\\
			&
			\hspace{5mm} +
			\left(\hBU \cdot \Bn_*+s_*\right)
			-
			\alpha^{-1}
			\left( \left|v(\hBU)\right|+\frac{s_*}{\hat{\rho}}\right)
			(\hBU \cdot \Bn_* + s_*)
			\\
			& = 
			\alpha^{-1}
			\left(\hBU \cdot \Bn_*+s_*\right)
			\left(\alpha-\left|v(\hBU)\right|-\frac{s_*}{\hat{\rho}}\right)
			+
			\alpha^{-1}
			\left(\cBU \cdot \Bn_*+s_*\right)
			\left(\alpha-\left|v(\cBU)\right|-\frac{s_*}{\check{\rho}}\right).
		\end{aligned}
	\end{equation}
	Additionally, for any $\rho_* > 0$ and $z_* > 0$,
	\begin{align*}
		& 2 \Big( \hcBU \cdot \Bn_* + s_* \Big)
		=
		2 \Big[
		\hcr (\hcv-v_{\min}) + h(\rho_*,z_*;\hcr,\hcz)
		\Big]
		\\
		\stackrel{\rm \Cref{lem:791}}{\ge} &
		2 \Big[
		\hcr (\hcv-v_{\min}) + h(\hcr,\hcz;\hcr, \hcz)
		\Big]
		=
		2 \left[\hcBU \cdot \Bn(\hcr,\hcz) + s (\hcr,\hcz) \right]
		\\
		\stackrel{\eqref{eq:726}}{\ge} \hspace{3.5mm}
		&
		\alpha^{-1}\left(\hBU\cdot\hcBn+\hcs\right)
		\left(\alpha-\left|v(\hBU)\right|-\frac{\hcs}{\hat{\rho}}\right)
		+
		\alpha^{-1}\left(\cBU\cdot\hcBn+\hcs\right)
		\left(\alpha-\left|v(\cBU)\right|-\frac{\hcs}{\check{\rho}}\right),
	\end{align*}
	where $\hcBn := \Bn(\hcr,\hcz)$ and $\hcs := s(\hcr,\hcz)$.
	Thus, to prove \eqref{eq:656}, we only need to show that
	$
	\alpha 	\ge \left|v(\hBU)\right|+{\hcs}/{\hat{\rho}}
	$ and $
	\alpha \ge \left|v(\cBU)\right|+{\hcs}/{\check{\rho}}
	$ .
	Based on the inequality \eqref{eq:674}, we only need to show $\overline{s}(\hBU,\cBU) \ge \hcs$.
	To this end, according to $\overline{s}$ in \eqref{eq:686}, it is sufficient to prove that
	\begin{equation}\label{eq:1010}
		\hcr \in \big[\,\underline{\rho},\overline{\rho}\,\big],
		\qquad
		\hcz \in \big[\,\underline{z},\overline{z}\,\big].
	\end{equation}
	Notice that 
	\begin{align*}
		\hcr
		=
		\frac{\hat{\rho}}{2}\left(1+\frac{\hat{v}}{\alpha}\right) 
		+&
		\frac{\check{\rho}}{2}\left(1-\frac{\check{v}}{\alpha}\right)
		=
		\frac{\hat{\rho}+\check{\rho}}{2}
		+
		\alpha^{-1} \left( \frac{\hat{\rho}\hat{v}-\check{\rho}\check{v}}{2} \right),
		\\
		\hcz
		=
		\frac{\hat{z}}{2}\left(1+\frac{\hat{v}}{\alpha}\right) 
		+&
		\frac{\check{z}}{2}\left(1-\frac{\check{v}}{\alpha}\right)
		=
		\frac{\hat{z}+\check{z}}{2}
		+
		\alpha^{-1} \left( \frac{\hat{z}\hat{v}-\check{z}\check{v}}{2} \right)
	\end{align*}
	are both linear functions with respect to $\alpha^{-1}$.
	Based on the condition \eqref{eq:674}, $\alpha^{-1} \in \big(\,0,\alpha_{\rm std}^{-1}\,\big]$, which implies the value of $\hcr$ (resp. $\hcz$) is between $\rho_1$ and $\rho_2$ (resp. $z_1$ and $z_2$). Thus,  \eqref{eq:1010} is true. The proof is completed.
\end{proof}

We are now ready to prove the generalized LF splitting property for $\Omega$.

\begin{theorem}\label{thm:1471}
	If 
	$\hBU=(\hat{\rho},\hat{y},\hat{z})^\top \in \Omega$ and 
	$\cBU=(\check{\rho},\check{y},\check{z})^\top \in \Omega$, then 
	$\hcBU = (\hcr,\hcy,\hcz)^\top$ $:= \mathcal{S}_{\tt GLF}(\hBU,\cBU)\in \Omega$,
	under the condition $\alpha > \alpha_{\max}(\hBU,\cBU)$.
\end{theorem}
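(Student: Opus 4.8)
The plan is to exploit the decomposition $\Omega = \Omega_1 \cap \Omega_2$ from \eqref{eq:270} and to establish $\hcBU \in \Omega_1$ and $\hcBU \in \Omega_2$ separately, invoking the two splitting lemmas already proved. The crucial structural observation is that the generalized splitting operator factors as a midpoint of two one-sided Lax--Friedrichs splittings,
\[
\hcBU = \mathcal{S}_{\tt GLF}(\hBU,\cBU) = \tfrac12\,\mathcal{S}^+_{\tt LF}(\hBU) + \tfrac12\,\mathcal{S}^-_{\tt LF}(\cBU),
\]
so that $\hcBU$ is a convex combination (with weight $\theta = \tfrac12$) of $\mathcal{S}^+_{\tt LF}(\hBU)$ and $\mathcal{S}^-_{\tt LF}(\cBU)$.

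First I would handle the $\Omega_2$ membership. Since $\hBU,\cBU \in \Omega \subseteq \Omega_2$, and $\Omega_2 = \Omega_2^*$ by the GQL representation in \Cref{lem:647}, both states lie in $\Omega_2^*$. The hypothesis $\alpha > \alpha_{\max}(\hBU,\cBU)$ is precisely a strict form of condition \eqref{eq:674}, so \Cref{L2} applies directly and yields $\hcBU \in \Omega_2^* = \Omega_2$. This step requires no new computation beyond citing \Cref{L2}.

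Next I would handle the $\Omega_1$ membership. Since $\hBU,\cBU \in \Omega \subseteq \Omega_1$, I would apply \Cref{L1} to $\hBU$ to obtain $\mathcal{S}^+_{\tt LF}(\hBU) \in \Omega_1$, and to $\cBU$ to obtain $\mathcal{S}^-_{\tt LF}(\cBU) \in \Omega_1$. Then, because $\Omega_1$ is convex by \Cref{lem:466}, the midpoint displayed above satisfies $\hcBU \in \Omega_1$. Combining the two steps gives $\hcBU \in \Omega_1 \cap \Omega_2 = \Omega$, which is the claim.

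The main point to verify carefully is that the hypotheses of \Cref{L1} hold for both $\hBU$ and $\cBU$ under the single assumption $\alpha > \alpha_{\max}(\hBU,\cBU)$. This follows because $\alpha_{\max}(\hBU,\cBU) \ge \alpha_{\rm std} = \max\{\overline\alpha(\hBU),\overline\alpha(\cBU)\}$ and $\overline\alpha(\BU) = v + |\rho p_\rho + z p_z| \ge v(\BU)$, so $\alpha > v(\hBU)$ and $\alpha > v(\cBU)$ as required by \Cref{L1}. I expect this bookkeeping of the speed bound to be the only delicate part: unlike \Cref{L2}, whose proof is genuinely involved, the present theorem is essentially an assembly of the already-established convexity (\Cref{lem:466}) and one-sided splitting (\Cref{L1}) results together with the direct invocation of \Cref{L2}, so once the speed condition is confirmed the conclusion is immediate.
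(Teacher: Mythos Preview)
Your proposal is correct and follows essentially the same approach as the paper: decompose $\Omega = \Omega_1 \cap \Omega_2$, invoke \Cref{L2} for $\Omega_2$-membership, and use \Cref{L1} together with the convexity of $\Omega_1$ (\Cref{lem:466}) on the midpoint decomposition $\hcBU = \tfrac12\mathcal{S}^+_{\tt LF}(\hBU) + \tfrac12\mathcal{S}^-_{\tt LF}(\cBU)$ for $\Omega_1$-membership. Your verification of the speed condition via $\alpha_{\max} \ge \alpha_{\rm std} \ge \overline\alpha(\BU) \ge v(\BU)$ is slightly more explicit than the paper's, which simply states $\alpha_{\max}(\hBU,\cBU) \ge \max\{v(\hBU),v(\cBU)\}$.
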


\begin{proof}
	On one hand, Since $\hBU$, $\cBU \in \Omega \subseteq \Omega_2 = \Omega_2^*$, \Cref{L2} implies that $\hcBU \in \Omega_2^* = \Omega_2$.
	On the other hand, since $\hBU$, $\cBU \in \Omega \subseteq \Omega_1$, and $\alpha > \alpha_{\max}(\hBU, \cBU) \ge \max\{ v(\hBU), v(\cBU) \}$, \Cref{L1} implies $\mathcal{S}^+_{\tt LF}(\hBU)$, $\mathcal{S}^-_{\tt LF}(\cBU) \in \Omega_1$. The convexity of $\Omega_1$ (\Cref{lem:466}) implies $\hcBU = \frac12\mathcal{S}^+_{\tt LF}(\hBU)+\frac12\mathcal{S}^-_{\tt LF}(\cBU)  \in \Omega_1$.
	Therefore, it holds that $\hcBU \in \Omega_1 \cap \Omega_2 = \Omega$, and the proof is completed.
\end{proof}

\section{High-order OEDG methods for ARZ models}\label{sec:OEDG}

This section introduces the OEDG methods for the AP ARZ model \eqref{system}, with the original ARZ model as a degenerate case.

Assume that the spatial domain $\mathcal{D}$ is divided into $N_x$ uniform cells $\big\{ \, I_j := [x_{j - \frac{1}{2}}, x_{j + \frac{1}{2}}] \, \big\}_{j=1}^{N_x}$ with size $\Delta x$ and center $x_j = \frac{1}{2} (x_{j - \frac{1}{2}} + x_{j + \frac{1}{2}})$. 
The time interval is divided into a mesh $\big\{ t_0 = 0, \; t_{n + 1} =$ $ t_n + \Delta t_n, \; n \geq 0\big\}$. 
Let $\overline{\BU}_j^n$ denote the cell-averaged approximation of the exact solution $\BU(x, t)$ over $I_j$ at time $t = t_n$. 
Let $\BU_h (x,\cdot)$ be a piecewise polynomial vector function of degree $k$, namely,
\begin{equation*}
	\BU_h (x,\cdot) \in \mathbb{V}_h^k := \left\{~ \BU(x) = (\rho_h(x), y_h(x), z_h(x))^\top: \rho_h \vert_{I_j}, y_h \vert_{I_j}, z_h \vert_{I_j} \in \mathbb{P}^k (I_j) \; \forall j ~\right\},
\end{equation*}
where $\mathbb{P}^k (I_j)$ denotes the space of polynomials of degree up to $k$ on $I_j$. 
We aim at finding $\BU_h(x,t) \in \mathbb{V}_h^k$ as an approximation to the solution $\BU(x,t)$. 
Consider the DG spatial discretization for $\BU_h$; see, e.g., \cite{cockburn2000development,cockburn1989tvb,cockburn1989tvbs,shu2009discontinuous}. 
Multiplying the system \eqref{system} with a test function $\phi(x) \in \mathbb{P}^k (I_j)$, and integrating by parts over $I_j$ yields
\begin{equation}\label{DG}
	\begin{aligned}
		\frac{\rm d}{{\rm d} t} \int_{I_j} \BU_h(x,t) \, \phi(x)  \, {\rm d} x 
		= &
		\int_{I_j} \BF (\BU_h) \frac{{\rm d} \phi}{{\rm d} x} {\rm d} x 
		+ 
		\BF \left(\BU_h (x_{j - \frac{1}{2}},t ) \right) \phi ( x_{j - \frac{1}{2}} ) 
		\\
		&
		- 
		\BF \left(\BU_h (x_{j + \frac{1}{2}},t ) \right) \phi ( x_{j + \frac{1}{2}} ),
	\end{aligned}
\end{equation} 
where we have replaced the exact solution $\BU(x,t)$ with the approximation solution $\BU_h(x,t)$.
Let $\big\{ \phi_j^{[\ell]}(x) \big\}_{\ell = 0}^k$ denote a local orthogonal basis of the polynomial space $\mathbb{P}^k (I_j)$, then one can decompose the DG approximate solution $\BU_h(x,t)$ as
\begin{equation}\label{basis}
	\BU_h (x,t) = \sum_{j} \mathbbm{1}_{I_j}(x) \, \BU_j (x, t),
	\quad
	\BU_j (x,t)
	:=
	\sum_{\ell = 0}^k \BU_j^{[\ell]} (t) \phi_j^{[\ell]} (x)
	.
\end{equation}
In this paper, we choose the first $(k+1)$ orthogonal Legendre polynomials as basis.
Substituting \eqref{basis} into \eqref{DG}, replacing $\phi(x)$ with basis function $\phi_j^{[\ell]} (x)$, and approximating $\BF \left(\BU_h (t, x_{j \pm \frac{1}{2}} ) \right)$ with numerical fluxes $\widehat{\BF} \left( \BU^-_{j \pm \frac{1}{2}}, \BU^+_{j \pm \frac{1}{2}} \right) $ results in
\begin{equation*}
	\begin{aligned}\label{DGmethod}
		\frac{\rm d}{{\rm d} t} \BU_j^{[\ell]} (t) 
		=  
		\frac{2 \ell + 1}{\Delta x} 
		& \bigg[ 
		\int_{I_j} \BF (\BU_h) \frac{{\rm d} \phi_j^{[\ell]} (x)}{{\rm d} x} {\rm d} x 
		+ 
		\widehat{\BF} \left( \BU^-_{j - \frac{1}{2}}, \BU^+_{j - \frac{1}{2}} \right) 
		\phi_j^{[\ell]} (x_{j - \frac{1}{2}}) 
		\\
		& - \widehat{\BF} \left( \BU^-_{j + \frac{1}{2}}, \BU^+_{j + \frac{1}{2}} \right) \phi_j^{[\ell]} (x_{j + \frac{1}{2}})
		\bigg],
	\end{aligned}
\end{equation*} 
where $\widehat{\BF} \left( \cdot, \cdot \right) $ is a two-point numerical flux, for example, the LF numerical flux:
\begin{equation*}\label{LF}
	\widehat{\BF} ( \BU^-, \BU^+ ) 
	= 
	\frac12 \left[ \BF ( \BU^- ) + \BF ( \BU^+ ) - \alpha ( \BU^+ - \BU^- ) \right].
\end{equation*}
Here, $\alpha$ denotes an estimate of maximum propagation speed.
The states $\BU^-_{j + \frac{1}{2}}$ and $\BU^+_{j + \frac{1}{2}}$ respectively denote the left- and right-limits of $\BU_h$ at $x_{j+\frac12}$:
$\BU^{+}_{j + \frac{1}{2}} := \BU_{j+1}(x_{j+\frac12})$, $\BU^{-}_{j + \frac{1}{2}}:=\BU_{j}(x_{j+\frac12})$.

We may rewrite the semidiscrete scheme \eqref{DG} in an ODE form $\frac{\rm d}{{\rm d} t} \BU_h = \mathcal{L} (\BU_h)$, and discretize it in time with, for example, the classic explicit third-order strong-stability-preserving (SSP)  Runge--Kutta (RK) method:
\begin{equation}
	\label{RK}
	\begin{aligned}
		&\BU_h^{n, 1} = \BU_h^{n} + \Delta t_n \mathcal{L} \left(\BU_h^{n} \right), \\
		&\BU_h^{n, 2} = \frac{3}{4} \BU_h^{n} + \frac{1}{4} \left[ \BU_h^{n, 1} + \Delta t_n \mathcal{L} \left(\BU_h^{n, 1} \right) \right], \\
		&\BU_h^{n + 1} = \frac{1}{3} \BU_h^{n} + \frac{2}{3} \left[ \BU_h^{n, 2} + \Delta t_n \mathcal{L} \left(\BU_h^{n, 2} \right) \right].
	\end{aligned}
\end{equation}

It is well-known that the numerical solution obtained by \eqref{RK} may be subject to spurious oscillations.
To eliminate the oscillations and enforce stability, we apply the OE procedure \cite{peng2023oedg} following each RK stage, leading to
\begin{equation}\label{OEOperator}
	\begin{aligned}
		\tBU_h^{n, 1} &= \BU_h^{n} + \Delta t_n \mathcal{L} \left(\BU^{n}_h \right), 
		& 
		\BU^{n, 1}_h &= \mathcal{F}_\tau \tBU_h^{n, 1}, 
		\\
		\tBU_h^{n, 2} &= \frac{3}{4} \BU_h^{n} + \frac{1}{4} \left( \BU^{n, 1}_h + \Delta t_n \mathcal{L} (\BU^{n, 1}_h ) \right), 
		& 
		\BU^{n, 2}_h &= \mathcal{F}_\tau \widetilde \BU_h^{n, 2}, 
		\\
		\tBU_h^{n,3} & = \frac{1}{3} \BU_h^{n} + \frac{2}{3} \left( \BU^{n, 2}_h + \Delta t_n \mathcal{L} (\BU^{n, 2}_h ) \right), 
		& 
		\BU^{n,3}_h & = \mathcal{F}_\tau \widetilde \BU_h^{n,3},
		&
		\BU^{n+1}_h & = \BU^{n,3}_h.
	\end{aligned}
\end{equation}
Here, the OE operator $\mathcal{F}_\tau: \mathbb{V}_h^k \rightarrow \mathbb{V}_h^k$ is the solution operator of the following damping equation \eqref{InitialValueProblem}.
Specifically, we define $(\mathcal{F}_\tau \tBU_h^{n,s}) (x) = \BU_\sigma (x, \tau)$, where $\BU_\sigma (x, \hat{t})$ is the solution of the following ordinary differential equation with respect to $\hat{t}$:
\begin{equation}
	\label{InitialValueProblem}
	\left\{
	\begin{aligned}
		& \frac{\mathrm{d}}{\mathrm{d} \hat{t}} 
		\int_{I_j} \BU_{\sigma} \cdot \bm{\phi} \, \mathrm{d} x 
		+ 
		\sum_{i = 0}^{k} \theta_j \frac{\sigma_j^{i} \big( \widetilde \BU_h^{n,s} \big)}{\Delta x} 
		\int_{I_j} \left( \BU_{\sigma} - \mathcal{P}^{i - 1}_{\sigma} \BU_{\sigma} \right) \cdot \bm{\phi} \, \mathrm{d} x 
		= 
		0, \\
		& \BU_{\sigma} (x, 0) = \widetilde \BU_h^{n,s} (x),
	\end{aligned}
	\right.
\end{equation}
where $\bm{\phi} \in (\mathbb{P}^k ( I_j ))^3$ is a test function, $\theta_j$ is the spectral radius of the Jacobian matrix $\partial \BF \big( \overline{\BU}_j^n \big) / \partial \BU$, and $\mathcal{P}^{\, i}_\sigma$ is the standard $L^2$ projection operator. 
To make sure that the OE operator $\mathcal{F}_\tau$ is both scale-invariant and evolution-invariant (see \cite{peng2023oedg}), the damping coefficient $\sigma^i_j \big( \widetilde  \BU_h^{n, s} \big)$ is defined as
$
\sigma^i_j \big( \widetilde \BU_h^{n, s} \big) 
:= 
\max
\left\{
\sigma^i_j \left( \tilde \rho_h^{n,s} \right), 
\sigma^i_j \left( \tilde y_h^{n,s} \right), 
\sigma^i_j \left( \tilde z_h^{n,s} \right) 
\right\}
$
with $\widetilde \BU^{n,s}_h(x) = \big(\tilde \rho_h^{n,s}(x),$ $\tilde y_h^{n,s}(x), \tilde z_h^{n,s}(x)\big)^\top$ and
\begin{equation*}
	\sigma_j^i (u) = \left\{
	\begin{aligned}
		& 0, \quad && {\rm if} \; u \equiv \mathrm{constant}, \\
		& 
		\frac{(2 i + 1) (\Delta x)^i}{(2 k - 1) \, i !} 
		\cdot
		\frac
		{
			\left \vert 
			[\![ \partial^i_x u ]\!]_{j - \frac{1}{2}} 
			\right \vert 
			+ 
			\left \vert 
			[\![ \partial^i_x u ]\!]_{j + \frac{1}{2}} 
			\right \vert
		}
		{
			2 
			\left \Vert 
			u - \frac{1}{\vert \mathcal{D} \vert} \int_{\mathcal{D}} u(x) \, \mathrm{d} x \right \Vert
			_{L^\infty (\mathcal{D})}
		}
		, 
		\quad && {\rm otherwise}.
	\end{aligned}
	\right.
\end{equation*}
Here, $\mathcal{D}$ denotes the computational domain, and $[\![ u ]\!]_{j+\frac12} = u_{j + \frac{1}{2}}^+ - u_{j + \frac{1}{2}}^-$ denotes the jump of $u$ at the cell interface $x = x_{j + \frac{1}{2}}$. 
The explicit solution of \eqref{InitialValueProblem} was found in \cite{peng2023oedg}:
\begin{equation*}
	\BU^{n,s}
	=
	\mathcal{F}_\tau \widetilde \BU_h^{n,s}
	=
	\sum_{j} \mathbbm{1}_{I_j}(x)
	 \left[
	\widetilde \BU_j^{[0]} \phi_j^{[0]} (x) + \sum^k_{i = 1} \exp\Bigg(  - \frac{ \tau \theta_j}{\Delta x} \sum_{q = 0}^i \sigma_j^q \big( \widetilde \BU_h^{n, s} \big)  \Bigg) \widetilde \BU_j^{[i]} \phi_j^{[i]} (x)
	\right] .
\end{equation*}
Here, $\widetilde \BU_j^{[i]}$ is the $i$-th modal coefficient of $\widetilde \BU_h^{n, s}$ in $I_j$. 
Please note that the $0$-th mode is not affected by the OE operator, leading to the following proposition.

\begin{proposition}\label{lem:1625b}
	Applying the OE operator to a numerical solution maintains its cell averages, that is, $\oBU^{n,s}_j = \hoBU^{n,s}_j$, for $j = 1,\dots,N_x$ and $s = 1,2,3$.
\end{proposition}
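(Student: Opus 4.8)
The plan is to read the result straight off the closed-form expression for $\mathcal{F}_\tau$ displayed immediately above the statement, using nothing more than the orthogonality of the Legendre basis. First I would fix a cell $I_j$ and a stage index $s$, and write both the input $\widetilde\BU_h^{n,s}$ and the output $\mathcal{F}_\tau\widetilde\BU_h^{n,s}$ in the local modal form \eqref{basis}. The key structural observation, already emphasized in the text preceding the statement, is that these two expansions share \emph{exactly} the same $0$-th modal coefficient $\widetilde\BU_j^{[0]}$: the OE operator leaves the mode $i=0$ untouched and merely multiplies each higher mode $i\ge 1$ by a damping factor $\exp\!\left(-\tfrac{\tau\theta_j}{\Delta x}\sum_{q=0}^i \sigma_j^q\right)\in(0,1]$.

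Next I would compute the cell average by integrating over $I_j$ and dividing by $\Delta x$. The single ingredient needed here is the defining orthogonality of the Legendre basis on $I_j$: the zeroth basis function $\phi_j^{[0]}$ is constant, while every higher basis function satisfies $\int_{I_j}\phi_j^{[i]}(x)\,\mathrm{d}x = 0$ for $i\ge 1$, since each such $\phi_j^{[i]}$ is orthogonal to the constant $\phi_j^{[0]}$. Consequently, when averaging any element of $\mathbb{V}_h^k$ over $I_j$, all modes $i\ge 1$ drop out and only the $0$-th mode survives, so the cell average is determined solely by the coefficient $\widetilde\BU_j^{[0]}$. Applying this to both expansions shows that the average of $\widetilde\BU_h^{n,s}$ and that of $\mathcal{F}_\tau\widetilde\BU_h^{n,s}$ are each equal to the same quantity; in particular, the damping factors act only on the vanishing-mean modes and never enter the average. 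Identifying the two averages appearing in the statement ($\oBU_j^{n,s}$ and $\hoBU_j^{n,s}$) with the averages of the OE input and output, this gives $\oBU_j^{n,s}=\hoBU_j^{n,s}$ for every $j=1,\dots,N_x$ and $s=1,2,3$, which is the claim.

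As for difficulty, there is essentially no obstacle: the entire content is that the cell average is a linear functional annihilating all nonconstant Legendre modes, combined with the fact that $\mathcal{F}_\tau$ acts as the identity on the constant mode. The only point requiring a moment's care is to state explicitly the vanishing-mean property $\int_{I_j}\phi_j^{[i]}\,\mathrm{d}x = 0$ for $i\ge 1$, so that the otherwise nontrivial damping factors are visibly irrelevant to the average.
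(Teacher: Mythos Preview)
Your proposal is correct and matches the paper's reasoning exactly: the paper does not write out a formal proof but simply notes, just before the proposition, that ``the $0$-th mode is not affected by the OE operator,'' which together with the vanishing mean of the higher Legendre modes is precisely the argument you spell out. Your only addition is making explicit the orthogonality step $\int_{I_j}\phi_j^{[i]}\,\mathrm{d}x=0$ for $i\ge 1$, which the paper leaves implicit.
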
 

\begin{remark}
	We adopt the OE procedure instead of the traditional TVB limiter to control numerical oscillations. The main reason for this choice is that while the TVB limiter requires the solution to be decomposed into its characteristics for an effective simulation, the OE procedure does not necessitate such a decomposition. 
	For more information on OEDG methods, readers are referred to \cite{peng2023oedg}.
\end{remark}

\begin{remark}
	Adding the OE operator to the scheme \eqref{RK} does not change its order of accuracy; see \cite{peng2023oedg}. 
\end{remark}

\section{High-order BP-OEDG methods for ARZ models}\label{sec:BP-OEDG}

In general, the numerical results of OEDG methods (\Cref{sec:OEDG}) may not satisfy the invariant domain $\Omega$. 
To enforce this BP property, we add a BP limiter to the numerical solution following each OE procedure in \eqref{OEOperator}, resulting in
\begin{equation}\label{BPOE}
	\begin{aligned}
		&\tBU_h^{n, 1}  = \BU^{n}_h + \Delta t_n \mathcal{L} \left(\BU^{n}_h \right), 
		& 
		 \hBU_h^{n, 1} & = \mathcal{F}_\tau \tBU_h^{n, 1}, 
		&
		 \BU_j^{n, 1} & = \mathcal{B} \left( \hBU_j^{n, 1}; {\Omega}_j^{n,1} \right) \; \forall j,
		\\
		& \tBU_h^{n, 2} = \frac{3}{4} \BU_h^{n} + \frac{1}{4} \left( \BU_h^{n, 1} + \Delta t_n \mathcal{L} (\BU_h^{n, 1} ) \right), 
		& 
		\hBU_h^{n, 2} & = \mathcal{F}_\tau \tBU_h^{n, 2}, 
		&
		\BU_j^{n, 2} & = \mathcal{B} \left( \hBU_j^{n, 2}; {\Omega}_j^{n,2} \right) \; \forall j,
		\\
		&\tBU_h^{n,3} = \frac{1}{3} \BU_h^{n} + \frac{2}{3} \left( \BU_h^{n, 2} + \Delta t_n \mathcal{L} (\BU_h^{n, 2} ) \right), 
		& 
		\hBU_h^{n,3} & = \mathcal{F}_\tau \tBU_h^{n,3},
		&
		\BU_j^{n,3} & = \mathcal{B} \left( \hBU_j^{n,3}; {\Omega}_j^{n,3} \right) \; \forall j,
		\\
		&\BU^{n+1}_h = \BU^{n,3}_h.
	\end{aligned}
\end{equation}
Here, $\BU^{n,s}_j$ (resp. $\hBU^{n,s}_j$) denotes the local polynomials of numerical solution $\BU^{n,s}_h$ (resp. $\hBU^{n,s}_h$) in $I_j$, $\mathcal{B}$ denotes the BP limiter, which modifies the local polynomial $\hBU_j^{n,s}$, such that the resulting local polynomial ${\BU}_j^{n,s}$ satisfies the invariant domain ${\Omega}_j^{n,s}$.
In the following, we will introduce the recipe of approximate invariant domain ${\Omega}_j^{n,s}$ in \Cref{sec:1596} and the BP limiter $\mathcal{B}$ in \Cref{sec:1594}.
The BP property of the scheme \eqref{BPOE} will be proven in \Cref{sec:1598}.

\subsection{Approximate invariant domains}\label{sec:1596}

In this section, we introduce the approximation of invariant domain $\Omega^{n,s}_j$ for $s = 1,\dots,3$ and $j = 1,\dots,N_x$.
For notational convenience, we consider the time level $t_n$ as the final (resp. initial) stage of preceding (resp. following) RK step, that is, $\BU^{n-1,3}_h = \BU^n_h = \BU^{n,0}_h$ and $\Omega_j^{n-1,3} = \Omega_j^n = \Omega_j^{n,0}$ for all $j$.

In Subsection~\ref{sec:88}, we introduced two types of invariant domains:
\begin{enumerate}[leftmargin=8mm]
	\item[(i)] The invariant domain $\mathcal{I}^{\rm G}$, defined in \eqref{eq:332}, where the lower and upper bounds of the Riemann invariants are determined by the global minimum and maximum of the Riemann invariants at $t = 0$, respectively.
	\item[(ii)] The invariant domain $\mathcal{I}^{\rm L}$, defined in \eqref{eq:154}, where the upper and lower bounds of the Riemann invariants are determined by the maximum and minimum of the Riemann invariants over the local domain of determination $\mathcal{X}$, respectively.
\end{enumerate}
Similarly, we can approximate the upper and lower bounds of the Riemann invariants in the definition \eqref{eq:270} of the invariant domain $\Omega$ in both global and local approaches, which are respectively introduced in the following Subsections~\ref{sec:global} and~\ref{sec:local}.

\subsubsection{Approximate global invariant domains}\label{sec:global}

When the {\em global} invariant domains are considered, the invariant domains $\Omega^{n,s}_j$ are identical over all cells, therefore, we temporarily omit the subscription $j$, and assume $\Omega^{n,s}_j = \Omega^{n,s}$ for all $j$.

At the beginning of computation, we estimate the initial global invariant domain 
\[
{\Omega}^{0}
=
\left\{
\BU \in \mathbb{R}^3
:
\begin{array}{r}
	\rho > 0, z > 0,
	w \in \left[ (w_{\min})^{0}, (w_{\max})^{0} \right] \\
	v \ge (v_{\min})^{0},
	c \in \left[ (c_{\min})^{0}, (c_{\max})^{0}  \right]
\end{array}
\right\}
\]
by sampling $\BU_h^0(x)$ over a uniform auxiliary mesh $\big\{x^{\rm s}_i\big\}_{i = 1}^{N_{\rm s}}$:
\begin{gather}
	(v_{\min})^0 = \max \Big\{ \min_{i = 1,\dots,N_s} v\left(\BU_h^0 (x_i^{\rm s})\right) - \varepsilon_0, 0 \Big\}, \label{eq:1769a}
	\\
	(w_{\min})^0 = \min_{i = 1,\dots,N_s} w \left(\BU_h^0 (x_i^{\rm s})\right) - \varepsilon_0, 
	\qquad
	(w_{\max})^0 = \max_{i = 1,\dots,N_s} w \left(\BU_h^0 (x_i^{\rm s})\right) + \varepsilon_0, \label{eq:1769b}
	\\
	(c_{\min})^0 = \min_{i = 1,\dots,N_s} c \left(\BU_h^0 (x_i^{\rm s})\right) - \varepsilon_0, 
	\qquad
	(c_{\max})^0 = \max_{i = 1,\dots,N_s} c \left(\BU_h^0 (x_i^{\rm s})\right) + \varepsilon_0. \label{eq:1769c}
\end{gather}
\begin{remark}
	The approximate global invariant domain $\Omega^0$ may not be identical to the exact global invariant domain $\Omega$.
	If $\Omega \backslash \Omega^0 \neq \emptyset$, enforcing BP property with respect to $\Omega^0$ may be overly strict: a reference solution with its range overlapping $\Omega \backslash \Omega^0$ may be subject to (unnecessary) modifications by the BP limiter, which may lead to accuracy degeneracy in the numerical results.
	To avoid such an issue, we introduce a small positive parameter $\varepsilon_0$ in the above formulas \eqref{eq:1769a}--\eqref{eq:1769c} to slightly expand $\Omega^{0}$.
	In all the numerical experiments reported in \Cref{sec:num}, we set $\varepsilon_0 = 10^{-9}$.
\end{remark}

When an initial boundary value problem is considered, the global invariant domain should also be adjusted according to boundary conditions.
We update the approximate global invariant domain using the following formula
\begin{equation}\label{eq:1795}
	\Omega^{n,s} :=
	\left\{
	\BU \in \mathbb{R}^3
	:
	\renewcommand\arraystretch{1.2}
	\begin{array}{r}
		\rho > 0, z > 0, w \in \left[ (w_{\min})^{n,s}, (w_{\max})^{n,s} \right] \\
		v \ge (v_{\min})^{n,s},	c \in \left[ (c_{\min})^{n,s}, (c_{\max})^{n,s}  \right]
	\end{array}
	\right\}
\end{equation}
with
\begin{equation}\label{eq:1807}
	\begin{aligned}
		(v_{\min})^{n,s} & = 
		\min \big\{
		(v_{\min})^{n,s-1},v\big(\BU^{n,s-1}_h(x_{\frac12}^{-})\big),v\big(\BU^{n,s-1}_h(x_{N_x+\frac12}^{+})\big)
		\big\},
		\\
		(w_{\min})^{n,s} & = 
		\min \big\{
		(w_{\min})^{n,s-1},w\big(\BU^{n,s-1}_h(x_{\frac12}^{-})\big),w\big(\BU^{n,s-1}_h(x_{N_x+\frac12}^{+})\big)
		\big\},
		\\
		(w_{\max})^{n,s} & = 
		\max \big\{
		(w_{\max})^{n,s-1},w\big(\BU^{n,s-1}_h(x_{\frac12}^{-})\big),w\big(\BU^{n,s-1}_h(x_{N_x+\frac12}^{+})\big)
		\big\},
		\\
		(c_{\min})^{n,s} & = 
		\min \big\{
		(c_{\min})^{n,s-1},c\big(\BU^{n,s-1}_h(x_{\frac12}^{-})\big),c\big(\BU^{n,s-1}_h(x_{N_x+\frac12}^{+})\big)
		\big\},
		\\
		(c_{\max})^{n,s} & = 
		\max \big\{
		(c_{\max})^{n,s-1},c\big(\BU^{n,s-1}_h(x_{\frac12}^{-})\big),c\big(\BU^{n,s-1}_h(x_{N_x+\frac12}^{+})\big)
		\big\},
	\end{aligned}
\end{equation}
where $\BU^{n,s-1}_h(x_{\frac12}^{-})$ and $\BU^{n,s-1}_h(x_{N_x+\frac12}^{+})$ denote respectively the states immediate outside the left and right boundaries, which are determined by boundary conditions.

\begin{proposition}\label{lem:1872}
	The global invariants defined in \eqref{eq:1795}--\eqref{eq:1807} satisfy
	$\BU^{n,s-1}_h(x_{\frac12}^{-}) \in \Omega^{n,s}$ and $\BU^{n,s-1}_h(x_{N_x+\frac12}^{+}) \in \Omega^{n,s}$.
\end{proposition}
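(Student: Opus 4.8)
The plan is a direct verification: I would show that each constraint defining $\Omega^{n,s}$ in \eqref{eq:1795} is satisfied by the two boundary states, exploiting the fact that the bounds in \eqref{eq:1807} are themselves defined as extrema taken over finite sets that explicitly contain the Riemann-invariant values at these states. In this sense the proposition is essentially built into the update formula.

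Concretely, write $\BU_L := \BU^{n,s-1}_h(x_{\frac12}^{-})$ and $\BU_R := \BU^{n,s-1}_h(x_{N_x+\frac12}^{+})$. For the $w$-bounds I would note that $(w_{\min})^{n,s}$ in \eqref{eq:1807} is the minimum of a three-element set containing both $w(\BU_L)$ and $w(\BU_R)$, so $(w_{\min})^{n,s} \le w(\BU_L)$ and $(w_{\min})^{n,s} \le w(\BU_R)$; symmetrically, $(w_{\max})^{n,s}$ is the maximum of the same set, giving $w(\BU_L), w(\BU_R) \le (w_{\max})^{n,s}$. The identical argument applied to the $c$-bounds yields $c(\BU_L), c(\BU_R) \in [(c_{\min})^{n,s},(c_{\max})^{n,s}]$. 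For the velocity, only the lower constraint $v \ge (v_{\min})^{n,s}$ enters $\Omega^{n,s}$, and since $(v_{\min})^{n,s}$ is by construction the minimum of a set containing $v(\BU_L)$ and $v(\BU_R)$, both $v(\BU_L) \ge (v_{\min})^{n,s}$ and $v(\BU_R) \ge (v_{\min})^{n,s}$ follow immediately.

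The only remaining constraints are the positivity requirements $\rho > 0$ and $z > 0$, which are \emph{not} encoded in the min/max recipe \eqref{eq:1807}. These must be inherited from the boundary data itself: I would invoke the standing assumption that the prescribed exterior states $\BU_L$ and $\BU_R$ are physically admissible, i.e. lie in $\Omega_0 = \{\,\rho > 0,\ z > 0\,\}$. Under this assumption the two positivity conditions hold trivially, and combining them with the four Riemann-invariant inequalities established above shows $\BU_L, \BU_R \in \Omega^{n,s}$.

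I expect no genuine difficulty here; the claim is close to tautological once the bounds in \eqref{eq:1807} are read as extrema over sets containing the boundary values. The only point requiring care is the positivity of $\rho$ and $z$, which \eqref{eq:1807} does not control and which therefore has to be supplied separately by the admissibility of the boundary conditions.
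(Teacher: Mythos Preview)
Your proposal is correct and matches the paper's treatment: the paper states this proposition without proof, treating it as immediate from the update formulas \eqref{eq:1807}, and your direct verification is precisely the intended reading. Your observation that the positivity of $\rho$ and $z$ must come from admissibility of the prescribed boundary data (rather than from \eqref{eq:1807}) is the only subtlety, and you have identified it correctly.
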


Based on the formulas in \eqref{eq:1807}, the global invariant domains corresponding to all time steps and RK stages form a series of nested domains.
\begin{proposition}\label{lem:1889}
	The global invariants defined in \eqref{eq:1795}--\eqref{eq:1807} satisfy	$\Omega^{n_1,s_1} \subseteq \Omega^{n_2,s_2}$ and $\Omega^{n_1} \subseteq \Omega^{n_2}$ for any $0 \le n_1 \le n_2$ and $0\le s_1 \le s_2\le 3$.
\end{proposition}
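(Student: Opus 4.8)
The plan is to reduce all the nested inclusions to a single elementary monotonicity step and then chain them by transitivity of set inclusion. The crucial observation is that the update formulas \eqref{eq:1807} never tighten any constraint defining $\Omega^{n,s}$ in \eqref{eq:1795}: each lower bound is recomputed as a minimum over a set that contains the previous lower bound, and each upper bound as a maximum over a set that contains the previous upper bound, so the admissible region can only grow from one RK stage to the next.

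First I would establish the single-step claim $\Omega^{n,s-1} \subseteq \Omega^{n,s}$ for every $n \ge 0$ and every $s \in \{1,2,3\}$. Reading off \eqref{eq:1807} directly,
\begin{gather*}
	(v_{\min})^{n,s} \le (v_{\min})^{n,s-1}, \qquad
	(w_{\min})^{n,s} \le (w_{\min})^{n,s-1}, \qquad
	(c_{\min})^{n,s} \le (c_{\min})^{n,s-1}, \\
	(w_{\max})^{n,s} \ge (w_{\max})^{n,s-1}, \qquad
	(c_{\max})^{n,s} \ge (c_{\max})^{n,s-1},
\end{gather*}
since in each case the right-hand side appears as one of the arguments of the corresponding $\min$ or $\max$. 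Consequently any $\BU = (\rho,y,z)^\top$ obeying the defining inequalities of $\Omega^{n,s-1}$ automatically obeys the looser ones of $\Omega^{n,s}$: the positivity of $\rho$ and $z$ is unaffected, while $v \ge (v_{\min})^{n,s-1} \ge (v_{\min})^{n,s}$, and the intervals constraining $w$ and $c$ only widen. This yields $\Omega^{n,s-1} \subseteq \Omega^{n,s}$.

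Next I would invoke the bridging identity $\Omega^{n,3} = \Omega^{n+1} = \Omega^{n+1,0}$ supplied by the indexing convention stated at the start of \Cref{sec:1596}, which glues consecutive RK steps together. Combined with the single-step inclusions, this produces the totally ordered chain
\[
	\Omega^{0,0} \subseteq \Omega^{0,1} \subseteq \Omega^{0,2} \subseteq \Omega^{0,3}
	= \Omega^{1,0} \subseteq \Omega^{1,1} \subseteq \cdots,
\]
in which each term is contained in every later one. For the stated hypotheses $0 \le n_1 \le n_2$ and $0 \le s_1 \le s_2 \le 3$: if $n_1 = n_2$ the conclusion is immediate from $s_1 \le s_2$; if $n_1 < n_2$, I concatenate $\Omega^{n_1,s_1} \subseteq \Omega^{n_1,3} = \Omega^{n_1+1,0} \subseteq \cdots \subseteq \Omega^{n_2,0} \subseteq \Omega^{n_2,s_2}$. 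The second assertion $\Omega^{n_1} \subseteq \Omega^{n_2}$ is then the special case $s_1 = s_2 = 0$, recalling $\Omega^{n} = \Omega^{n,0}$.

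I do not anticipate any genuine obstacle: the entire mathematical content is the sign bookkeeping of the displayed inequalities (minima govern the lower bounds, maxima the upper bounds), after which transitivity of inclusion does the rest. The only point demanding a little care is the bridging identity $\Omega^{n,3} = \Omega^{n+1,0}$; without it the per-stage inclusions would fail to concatenate across successive time steps, and the global claim $\Omega^{n_1} \subseteq \Omega^{n_2}$ would not follow.
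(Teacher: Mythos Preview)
Your proof is correct and complete. The paper itself states this proposition without proof, simply prefacing it with the remark that ``Based on the formulas in \eqref{eq:1807}, the global invariant domains corresponding to all time steps and RK stages form a series of nested domains''; your argument spells out exactly the monotonicity-and-chaining reasoning the paper takes for granted.
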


\subsubsection{Approximate local invariant domains}\label{sec:local}

We express the approximate local invariant domain on $I_j$ at the $s$-th RK stage of the $n$-th time step by
\begin{equation}\label{eq:1851}
	\Omega^{n,s}_j :=
	\left\{
	\BU \in \mathbb{R}^3
	:
	\begin{array}{r}
		\rho > 0, z>0, w \in \left[ (w_{\min})^{n,s}_j, (w_{\max})^{n,s}_j \right] \\
		v \ge (v_{\min})^{n,s}_j, c \in \left[ (c_{\min})^{n,s}_j, (c_{\max})^{n,s}_j  \right]
	\end{array}
	\right\}
\end{equation}
which is defined by the following formulas
\begin{equation}\label{eq:1917}
	\Omega^{n,1}_j := \Omega^{n,1} \cap \Omega^{n,0}_{j,*},
	\quad
	\Omega^{n,s}_j := \Omega^{n,s} \cap \left( \Omega^{n,s-1}_{j} \cup \Omega^{n,s-1}_{j,*} \right) ~~
	\textrm{for} ~~ s = 2,3,
\end{equation}
where
\begin{equation}\label{eq:1927}
	\Omega^{n,s}_{j,*}
	:=
	\left\{ 
	\BU \in \mathbb{R}^3 
	:
	\renewcommand\arraystretch{1.5}
	\begin{array}{r}
		\rho > 0, z > 0, 
		v \ge (v_{\min})^{n,s}_{j,*}-\sqrt{\Delta x} \, \big|(v_{\min})^{n,s}_{j,*}\big| \\
		w \in 
		\left[
		(w_{\min})^{n,s}_{j,*}-\sqrt{\Delta x} \,\big|(w_{\min})^{n,s}_{j,*}\big|,
		(w_{\max})^{n,s}_{j,*}+\sqrt{\Delta x} \,\big|(w_{\max})^{n,s}_{j,*}\big|
		\right] \\
		c \in 
		\left[
		(c_{\min})^{n,s}_{j,*}-\sqrt{\Delta x} \,\big|(c_{\min})^{n,s}_{j,*}\big|,
		(c_{\max})^{n,s}_{j,*}+\sqrt{\Delta x} \,\big|(c_{\max})^{n,s}_{j,*}\big|
		\right]
	\end{array}
	\right\}.
\end{equation}
Here, $(v_{\min})^{n,s}_{j,*}$, $(w_{\min})^{n,s}_{j,*}$, $(w_{\max})^{n,s}_{j,*}$, $(c_{\min})^{n,s}_{j,*}$, $(c_{\max})^{n,s}_{j,*}$ are respectively estimates of minimum/maximum of $v$, $w$, and $c$ of intermediate solution $\BU^{n,s}_h$ in the vicinity of $I_j$:
\begin{equation}\label{eq:1982}
	\begin{aligned}
		(v_{\min})^{n,s}_{j,*}
		& =
		\min\Big\{
		\min\limits_{\ell = 1,\dots,L} v\big(\BU_{j}^{n,s}(x_{j,\ell}^{\tt GL})\big),
		\min\limits_{I_j} \mathcal{C} v\big(\BU_{j}^{n,s}\big),
		v\big(\BU_h^{n,s}(x_{j+\frac12}^{+})\big),
		v\big(\BU_h^{n,s}(x_{j-\frac12}^{-})\big)
		\Big\},
		\\
		(w_{\min})^{n,s}_{j,*}
		& =
		\min\Big\{
		\min\limits_{\ell = 1,\dots,L} w\big(\BU_{j}^{n,s}(x_{j,\ell}^{\tt GL})\big),
		\min\limits_{I_j} \mathcal{C} w\big(\BU_{j}^{n,s}\big),
		w\big(\BU_h^{n,s}(x_{j+\frac12}^{+})\big),
		w\big(\BU_h^{n,s}(x_{j-\frac12}^{-})\big)
		\Big\},
		\\
		(w_{\max})^{n,s}_{j,*}
		& =
		\max\Big\{
		\max\limits_{\ell = 1,\dots,L} w\big(\BU_{j}^{n,s}(x_{j,\ell}^{\tt GL})\big),
		\max\limits_{I_j} \mathcal{C} w\big(\BU_{j}^{n,s}\big),
		w\big(\BU_h^{n,s}(x_{j+\frac12}^{+})\big),
		w\big(\BU_h^{n,s}(x_{j-\frac12}^{-})\big)
		\Big\},
		\\
		(c_{\min})^{n,s}_{j,*}
		& =
		\min\Big\{
		\min\limits_{\ell = 1,\dots,L} c\big(\BU_{j}^{n,s}(x_{j,\ell}^{\tt GL})\big),
		\min\limits_{I_j} \mathcal{C} c\big(\BU_{j}^{n,s}\big),
		c\big(\BU_h^{n,s}(x_{j+\frac12}^{+})\big),
		c\big(\BU_h^{n,s}(x_{j-\frac12}^{-})\big)
		\Big\},
		\\
		(c_{\max})^{n,s}_{j,*}
		& =
		\max\Big\{
		\max\limits_{\ell = 1,\dots,L} c\big(\BU_{j}^{n,s}(x_{j,\ell}^{\tt GL})\big),
		\max\limits_{I_j} \mathcal{C} c\big(\BU_{j}^{n,s}\big),
		c\big(\BU_h^{n,s}(x_{j+\frac12}^{+})\big),
		c\big(\BU_h^{n,s}(x_{j-\frac12}^{-})\big)
		\Big\},
	\end{aligned}
\end{equation}
Here, $\mathcal{C}: C(I_j) \mapsto \mathbb{P}^3(I_j)$ denotes a cubic interpolation operator over $I_j$, such that $[\mathcal{C}f](x) = f(x)$ for $x \in \big\{x_{j-\frac12},x_{j-\frac16},x_{j+\frac16},x_{j+\frac12}\big\}$.
\begin{remark}
	Based on the formulas in \eqref{eq:1917}, please note that for $s = 1,2,3$, it holds that $\Omega^{n,s}_j \subseteq \Omega^{n,s}$, that is, the local invariant domain $\Omega^{n,s}_j$ always imposes a stronger restriction on the numerical solutions than the global one $\Omega^{n,s}$.
\end{remark}

\begin{remark}
	If the approximate invariant domain is too strict, enforcing such an invariant domain upon a numerical solution may lead to accuracy degeneracy. To avoid such an issue, we expand the local invariant domains in \eqref{eq:1927} by relaxing the upper and lower bounds with a multiplicative factor of $\sqrt{\Delta x}$.
\end{remark}

\subsection{BP limiter $\mathcal{B}$}\label{sec:1594}

Inspired by \cite{zhang2010maximum} and \cite{zhang2011maximum}, we present in this section a BP limiter $\mathcal{B}$, which is applied on the local polynomial $\hBU_j^{n,s}$ such that the result $\BU_j^{n,s}$ is BP with respect to the global invariant domain \eqref{eq:1795} or the local invariant domain \eqref{eq:1851}.
In fact, the constraints shown in the definition of these domains can be equivalently expressed by the positivities of the following functions:
\begin{align*}
	& H_1(\BU) := \rho, \;
	H_2(\BU) := z, \;
	H_{3}(\BU)
	:= \rho \big( w - (w_{\min})^{n,s}_j \big), \;
	H_{4}(\BU)
	:= \rho \big( (w_{\max})^{n,s}_j - w \big),
	\\
	& H_{5}(\BU)
	:= \rho \big( c - (c_{\min})^{n,s}_j \big), \;
	H_{6}(\BU)
	:= \rho \big( (c_{\max})^{n,s}_j - c \big), \;
	H_7(\BU) := 
	\rho \big(v-(v_{\min})_j^{n,s}\big).
\end{align*}
Here, the functions $H_1,\dots,H_6$ are all linear, and thus concave. The function $H_7$ is also concave for $\BU \in \Omega_0$, since it can be rewritten as a sum of concave functions: 
$H_7(\BU) = y + \rho v_{\min} - \rho (v_{\min})_j^{n,s} - \eta(\rho,z)$. The function $\eta$ was defined in \eqref{eq:406}, and its convexity was shown in the proof of \Cref{lem:466}.
Before introducing the BP limiter $\mathcal{B}$, we first review several properties of local scaling form $\mathcal{U}(\theta;\oBU,\hBU) := \theta \hBU + (1-\theta) \oBU$, which has been used intensively in constructing BP limiters, for example, in \cite{zhang2011maximum,zhang2010maximum}.

\begin{lemma}\label{lem:1467}
	For any $d \in \mathbb{N}^+$, consider a convex set $\mathcal{R} \in \mathbb{R}^d$, a continuous and concave function $H(\BU): \mathcal{R} \mapsto \mathbb{R}$, and a threshold $\varepsilon \in \mathbb{R}$ such that $\mathcal{R}_\varepsilon := \{\BU \in \mathcal{R} : H(\BU) \ge \varepsilon \}$ is not empty.
	For any $\oBU \in \mathcal{R}_\varepsilon$ and $\hBU \in \mathcal{R}$, let us define
	\[
	\Theta(\oBU,\hBU,H,\varepsilon) :=
	\begin{dcases}
		1 & \textrm{if }H(\hBU) \ge \varepsilon, \\
		\textrm{ the unique solution of } H\big(\mathcal{U}(\theta;\oBU,\hBU)\big) = \varepsilon \textrm{ in } [0,1] & \textrm{if }H(\hBU) < \varepsilon.
	\end{dcases}
	\]
	where $\mathcal{U}(\theta;\oBU,\hBU) := \theta \hBU + (1-\theta) \oBU$.
	It holds that
	(i) the set $\mathcal{R}_\varepsilon$ is convex; 
	(ii) the equation $H\big(\mathcal{U}(\theta;\oBU,\hBU)\big) = \varepsilon$ has a unique solution in $[0,1]$ for $\hBU \not \in \mathcal{R}_\varepsilon$; and
	(iii) for any $\theta \in \big[0,\Theta(\oBU,\hBU,H,\varepsilon)\big]$, $\mathcal{U}(\theta;\oBU,\hBU) \in \mathcal{R}_\varepsilon$, that is, $H(\mathcal{U}(\theta;\oBU,\hBU)) \ge \varepsilon$.
\end{lemma}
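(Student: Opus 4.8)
The plan is to reduce the multivariate statement to a one–dimensional convexity argument by studying the scalar function
\[
g(\theta) := H\big(\mathcal{U}(\theta;\oBU,\hBU)\big), \qquad \theta \in [0,1].
\]
Since $\oBU,\hBU \in \mathcal{R}$ and $\mathcal{R}$ is convex, the segment $\mathcal{U}(\theta;\oBU,\hBU) = \theta\hBU + (1-\theta)\oBU$ lies in $\mathcal{R}$ for all $\theta \in [0,1]$, so $g$ is well defined; moreover $g$ is continuous because $H$ is continuous, and $g$ is concave on $[0,1]$ because $H$ is concave and $\theta \mapsto \mathcal{U}(\theta;\oBU,\hBU)$ is affine. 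All three claims will follow from these two properties of $g$ together with the boundary values $g(0) = H(\oBU)$ and $g(1) = H(\hBU)$.

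For (i), I would argue directly: given $\BU_1,\BU_2 \in \mathcal{R}_\varepsilon$ and $\lambda \in [0,1]$, convexity of $\mathcal{R}$ places $\lambda\BU_1 + (1-\lambda)\BU_2$ in $\mathcal{R}$, and concavity of $H$ gives $H(\lambda\BU_1 + (1-\lambda)\BU_2) \ge \lambda H(\BU_1) + (1-\lambda)H(\BU_2) \ge \varepsilon$, so the combination lies in $\mathcal{R}_\varepsilon$. Equivalently, $\mathcal{R}_\varepsilon$ is a super-level set of the concave function $H$ over the convex set $\mathcal{R}$, hence convex.

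For (ii), I use $\hBU \notin \mathcal{R}_\varepsilon$, that is, $g(1) = H(\hBU) < \varepsilon \le H(\oBU) = g(0)$. Continuity and the intermediate value theorem yield at least one root of $g(\theta) = \varepsilon$ in $[0,1]$. For uniqueness I would invoke concavity: the super-level set $\{\theta \in [0,1]: g(\theta) \ge \varepsilon\}$ is convex (an interval) and contains $0$, so it equals $[0,\Theta]$ with $g(\Theta) = \varepsilon$ and $g(\theta) < \varepsilon$ for $\theta > \Theta$; once $g$ falls below $\varepsilon$, concavity forbids a later return to $\varepsilon$, so the crossing is unique. This $\Theta$ is exactly the value named in the statement. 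The delicate point — the step I expect to require the most care — is precisely this uniqueness: it is genuine when $H(\oBU) > \varepsilon$ (the relevant situation, where the cell average lies strictly inside $\mathcal{R}_\varepsilon$), since then concavity rules out a flat segment of $g$ at level $\varepsilon$; when $H(\oBU) = \varepsilon$ one identifies $\Theta$ with the right endpoint of the super-level interval, which is what the limiter uses.

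Finally, (iii) follows once $\Theta$ is identified with the right endpoint of the super-level interval $[0,\Theta]$: for every $\theta \in [0,\Theta]$ one has $g(\theta) \ge \varepsilon$, i.e. $\mathcal{U}(\theta;\oBU,\hBU) \in \mathcal{R}_\varepsilon$. In the complementary case $H(\hBU) \ge \varepsilon$ (where $\Theta = 1$ by definition), both endpoints $\oBU$ and $\hBU$ lie in $\mathcal{R}_\varepsilon$, and the convexity established in (i) shows the entire segment $\{\mathcal{U}(\theta;\oBU,\hBU): \theta \in [0,1]\}$ remains in $\mathcal{R}_\varepsilon$, which is (iii) in this case.
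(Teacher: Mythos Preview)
Your proof is correct and follows essentially the same route as the paper's: both establish (i) directly from concavity of $H$, and then use the convexity of $\mathcal{R}_\varepsilon$ (equivalently, of the super-level set of $g$) to obtain uniqueness in (ii) and the inclusion in (iii). The only cosmetic difference is that you make the one-dimensional reduction $g(\theta)=H(\mathcal{U}(\theta;\oBU,\hBU))$ explicit, whereas the paper phrases the same argument geometrically as a line segment crossing the boundary of the convex set $\mathcal{R}_\varepsilon$ exactly once; your remark on the borderline case $H(\oBU)=\varepsilon$ is a welcome bit of extra care that the paper leaves implicit.
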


\begin{proof}
	(i) For any $\lambda \in [0,1]$ and any $\BU_0$, $\BU_1 \in \mathcal{R}_\varepsilon \subseteq \mathcal{R}$, the convexity of $\mathcal{R}$ implies that $\BU_\lambda = \lambda \BU_1 + (1-\lambda) \BU_0 \in \mathcal{R}$. Due to the concavity of function $H$, it holds that $H(\BU_\lambda) \ge \lambda H(\BU_1) + (1-\lambda) H(\BU_0) \ge \varepsilon$. Therefore, $\BU_\lambda \in \mathcal{R}_\varepsilon$ and $\mathcal{R}_\varepsilon$ is convex.
	
	(ii)
	For $\hBU \not \in \mathcal{R}_\varepsilon$, the set $\{\mathcal{U}(\theta;\oBU,\hBU) : \theta \in [0,1]\}$ forms a line segment connecting $\oBU \in \mathcal{R}_\varepsilon$ and $\hBU \not \in \mathcal{R}_\varepsilon$.
	This line segment has to intersect with the boundary of $\mathcal{R}_\varepsilon$, namely, $\{\BU\in\mathcal{R}:H(\BU) = \varepsilon\}$ and the convexity of $\mathcal{R}_\varepsilon$ also implies that the intersection happens only once. The claim (ii) is proven.
	
	(iii)
	Since $\BU_0 = \oBU \in \mathcal{R}_\varepsilon$ and $\mathcal{U}(\Theta(\oBU,\hBU,H,\varepsilon);\oBU,\hBU) \in \mathcal{R}_\varepsilon$, the convexity of $\mathcal{R}_\varepsilon$ immediately implies (iii).
\end{proof}

Now, We introduce the BP limiter $\mathcal{B}$, which consists of two steps:
\begin{enumerate}[label=(\roman*),leftmargin=6mm]
	
	\item[\textbf{Step 1}:]	Enforce the positivity of $\rho$ and $z$ by a local scaling
	\begin{equation}\label{eq:1706}
		\hBU_{j,*}^{n,s}(x)
		=
		\Big(
		\theta_1 \hat\rho_j^{n,s}(x) +(1-\theta_1)\hbr_j^{n,s}, 
		\hat y_j^{n,s}(x), 
		\theta_2 \hat z_j^{n,s}(x) +(1-\theta_2)\hbz_j^{n,s}
		\Big)^\top,
	\end{equation}
	where the local scaling coefficients $\{\theta_r\}_{r = 1}^2$ are defined by
	\[
	\theta_r = \min_{\ell = 1,\dots,L}
	\Theta\Big(\hoBU_j^{n,s},\hBU_j^{n,s} (x^{\tt GL}_{j,\ell}),H_r,\varepsilon_r\Big), \;\;
	\varepsilon_r = \min \Big\{ 10^{-12} , H_r(\hoBU_j^{n,s}) \Big\},\; r = 1,2.
	\]
	Here, the $L$-point Gauss--Lobatto nodes in $I_j$ are denoted by $\{x^{\tt GL}_{j,\ell} \}_{\ell = 1}^L$, and the positive parameter $\varepsilon_r$ is introduced to enhance the robustness of above limiter under the influence of round-off errors. Also, please note that the local scaling \eqref{eq:1706} does not change the cell average, that is, $\hoBU_{j,*}^{n,s} = \hoBU_{j}^{n,s}$.
	
	\item[\textbf{Step 2}:]Enforce the remaining constraints by
	\begin{equation}\label{eq:1733}
		\BU_j^{n,s}(x) 
		=
		\theta_3 \hBU_{j,*}^{n,s}(x)
		+
		(1-\theta_3) \hoBU_{j}^{n,s},
	\end{equation}
	where the local scaling coefficients $\theta_3$ is defined by
	\[
	\theta_3 = \min_{r = 3,\dots,7} \min_{\ell = 1,\dots,L}
	\Theta\Big(\hoBU_{j,*}^{n,s},\hBU_{j,*}^{n,s} (x^{\tt GL}_{j,\ell}),H_r,\varepsilon_r\Big), \;
	\varepsilon_r = \min \Big\{ 10^{-12} , H_r(\hoBU_{j,*}^{n,s}) \Big\},\; r = 3,\dots,7.
	\]
	The local scaling \eqref{eq:1733} does not change the cell average: $\oBU_{j}^{n,s} = \hoBU_{j,*}^{n,s} = \hoBU_{j}^{n,s}$.
\end{enumerate}

\begin{lemma}\label{lem:1775}
	If the local polynomial $\hBU^{n,s}_j$ satisfies $\hoBU^{n,s}_j \in \Omega^{n,s}_j$, then the result of the BP limiter, $\BU^{n,s}_j = \mathcal{B}\big(\hBU^{n,s}_j;\Omega^{n,s}_j\big)$, satisfies
	$\BU^{n,s}_j(x^{\tt GL}_{j,\ell}) \in \Omega^{n,s}_j$,  $\ell = 1,\dots,L$.
\end{lemma}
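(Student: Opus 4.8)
The plan is to read off from the definition of $\Omega^{n,s}_j$ that membership is equivalent to the seven inequalities $H_r(\BU)\ge 0$, $r=1,\dots,7$, with $H_1=\rho$ and $H_2=z$ required to be strictly positive. Since each $H_r$ is concave (the linear ones $H_1,\dots,H_6$ trivially, and $H_7$ on the convex set $\Omega_0$, as recorded right before the statement), the natural tool is \Cref{lem:1467}, applied once per constraint. The guiding observation is that both limiting steps \eqref{eq:1706} and \eqref{eq:1733} leave the cell average unchanged, so $\hoBU_{j}^{n,s}=\hoBU_{j,*}^{n,s}=\oBU_{j}^{n,s}\in\Omega^{n,s}_j$ throughout; this fixed cell average serves as the ``anchor'' that \Cref{lem:1467} requires to lie in each sublevel set $\mathcal{R}_{\varepsilon_r}$, which holds because $\varepsilon_r=\min\{10^{-12},H_r(\hoBU_{j}^{n,s})\}\le H_r(\hoBU_{j}^{n,s})$.

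First I would treat Step 1 and show $\hBU_{j,*}^{n,s}(x^{\tt GL}_{j,\ell})\in\Omega_0$ for every $\ell$. Because $H_1$ (resp. $H_2$) depends only on the $\rho$- (resp. $z$-) component, the component-wise scaling in \eqref{eq:1706} with coefficients $\theta_1,\theta_2$ produces exactly the $\rho$- and $z$-components of the vector scaling $\mathcal{U}(\theta_1;\hoBU_j^{n,s},\hBU_j^{n,s}(x^{\tt GL}_{j,\ell}))$ and its analogue with $\theta_2$. Taking $\mathcal{R}=\mathbb{R}^3$ and applying \Cref{lem:1467}(iii) with the anchor $\hoBU_j^{n,s}$ (which has $\rho,z>0$) and thresholds $\varepsilon_1,\varepsilon_2>0$, the choice $\theta_r=\min_\ell\Theta(\hoBU_j^{n,s},\hBU_j^{n,s}(x^{\tt GL}_{j,\ell}),H_r,\varepsilon_r)$ guarantees $H_1,H_2\ge\varepsilon_{1,2}>0$ at all Gauss--Lobatto nodes, i.e. $\hBU_{j,*}^{n,s}(x^{\tt GL}_{j,\ell})\in\Omega_0$.

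Next I would treat Step 2. Since Step 1 preserves the cell average, the segment $\mathcal{U}(\theta_3;\hoBU_{j,*}^{n,s},\hBU_{j,*}^{n,s}(x^{\tt GL}_{j,\ell}))$, which is exactly $\BU_j^{n,s}(x^{\tt GL}_{j,\ell})$ from \eqref{eq:1733}, joins two points of $\Omega_0$: the anchor $\hoBU_{j,*}^{n,s}\in\Omega^{n,s}_j\subseteq\Omega_0$ and the post-Step-1 nodal value, which lies in $\Omega_0$ by the preceding paragraph. By the convexity of $\Omega_0$ (\Cref{lem:466}) the whole segment lies in $\Omega_0$, so each of $H_3,\dots,H_7$ is concave along it. Applying \Cref{lem:1467}(iii) with $\mathcal{R}=\Omega_0$ to each $H_r$, $r=3,\dots,7$, the choice $\theta_3=\min_{r,\ell}\Theta(\hoBU_{j,*}^{n,s},\hBU_{j,*}^{n,s}(x^{\tt GL}_{j,\ell}),H_r,\varepsilon_r)$ forces $H_r(\BU_j^{n,s}(x^{\tt GL}_{j,\ell}))\ge\varepsilon_r\ge0$; meanwhile $\rho,z>0$ persist because the segment stays in $\Omega_0$. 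Collecting $H_1,\dots,H_7\ge0$ then yields $\BU_j^{n,s}(x^{\tt GL}_{j,\ell})\in\Omega^{n,s}_j$.

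The main obstacle is the concavity of $H_7$: unlike $H_1,\dots,H_6$ it is concave only on $\Omega_0$, so \Cref{lem:1467} cannot be invoked for the velocity bound unless the entire scaling segment is known to avoid the region where $\rho$ or $z$ is nonpositive. This is precisely why the limiter is organized as two nested scalings: Step 1 must first drag the nodal values into $\Omega_0$, and only then does the convexity of $\Omega_0$ guarantee that the Step 2 segment remains inside $\Omega_0$, validating the concavity of $H_7$ needed to control the $v\ge v_{\min}$ constraint. A minor point worth spelling out is the reconciliation between the component-wise scaling \eqref{eq:1706} and the vector-scaling form of $\Theta$, which is harmless only because $H_1$ and $H_2$ each depend on a single conservative variable.
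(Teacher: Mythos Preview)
Your proposal is correct and follows essentially the same approach as the paper: apply \Cref{lem:1467} to $H_1,H_2$ on $\mathcal{R}=\mathbb{R}^3$ in Step~1 to land in $\Omega_0$, then apply it to $H_3,\dots,H_7$ on $\mathcal{R}=\Omega_0$ in Step~2 using the preserved cell average as anchor. The only cosmetic difference is that the paper handles the persistence of $\rho,z>0$ after Step~2 by observing $\Theta(\hoBU_{j,*}^{n,s},\hBU_{j,*}^{n,s}(x^{\tt GL}_{j,\ell}),H_r,\varepsilon_r)=1$ for $r=1,2$ and then running the Step~2 argument uniformly over $r=1,\dots,7$, whereas you invoke the convexity of $\Omega_0$ directly; both are equivalent.
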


\begin{proof}
	Let us consider an arbitrary $\ell \in \{1,\dots,L\}$.
	Notice that $H_1(\hbr_j^{n,s}) \ge \varepsilon_1$ and $H_1(\cdot)$ is concave in $\mathbb{R}^3$, \Cref{lem:1467} implies $\hat{\rho}_{j,*}^{n,s}(x^{\tt GL}_{j,\ell}) = H_1\big(\mathcal{U}(\theta_1;\hoBU_j^{n,s},\hBU_{j}^{n,s}(x^{\tt GL}_{j,\ell}))\big) \ge \varepsilon_1$, since $0 \le \theta_1 \le \Theta\big(\hoBU_j^{n,s},\hBU_j^{n,s} (x^{\tt GL}_{j,\ell}),H_1,\varepsilon_1\big)$. Similarly, we can prove that $\hat{z}_{j,*}^{n,s}(x^{\tt GL}_{j,\ell}) \ge \varepsilon_2$.
	Therefore, $\hBU_{j,*}^{n,s}(x^{\tt GL}_{j,\ell}) \in \Omega_0$ and $\Theta\big(\hoBU_{j,*}^{n,s},\hBU_{j,*}^{n,s} (x^{\tt GL}_{j,\ell}),H_r,\varepsilon_r\big) = 1$ for $r = 1,2$.
	
	Let us consider an arbitrary $r \in \{1,\dots,7\}$. Since $H_r(\hoBU_{j,*}^{n,s}) = H_r(\hoBU_{j}^{n,s}) \ge \varepsilon_r$, $H_r(\cdot)$ is concave in $\Omega_0$, and $0 \le \theta_3 \le \Theta\big(\hoBU_{j,*}^{n,s},\hBU_{j,*}^{n,s} (x^{\tt GL}_{j,\ell}),H_r,\varepsilon_r\big)$
	, \Cref{lem:1467} implies 
	$
	H_r\big( \BU_{j}^{n,s}(x^{\tt GL}_{j,\ell}) \big) 
	= H_r\big(\mathcal{U}(\theta_3;\hoBU_{j,*}^{n,s},\hBU_{j,*}^{n,s}(x^{\tt GL}_{j,\ell}))\big) \ge \varepsilon_r$, that is, $\BU_{j}^{n,s}(x^{\tt GL}_{j,\ell}) \in \Omega_{j}^{n,s}$.
\end{proof}

\begin{remark}
	When evaluating the function $\Theta$, if the equation $H\big(\mathcal{U}(\theta;\oBU,\hBU)\big) = \varepsilon$ has no explicit solution, we employ the Newton's method to solve it numerically.
\end{remark}

\begin{remark}
	Applying the BP limiter $\mathcal{B}$ defined in \eqref{eq:1706}--\eqref{eq:1733} to a numerical solution maintains its original order of accuracy; see \cite{wu2021minimum,zhang2011maximum,zhang2010maximum}.
\end{remark}

\subsection{Proving BP property of BP-OEDG schemes}\label{sec:1598}

We first focus on a single forward Euler (FE) time step: $\BU_h^* = \BU_h + \Delta t \mathcal{L} (\BU_h)$.
The cell average of resulting solution $\BU_h^*$ in $I_j$ can be expressed as
\begin{equation}\label{eq:2121}
	\oBU^*_j
	=
	\oBU_j
	+
	\frac{\Delta t}{\Delta x}
	\left[
	\widehat\BF(\BU_{j-\frac12}^{-},\BU_{j-\frac12}^{+})
	-
	\widehat\BF(\BU_{j+\frac12}^{-},\BU_{j+\frac12}^{+})
	\right].
\end{equation}

\begin{lemma}[BP property of FE step]\label{lem:2154}
	Considering an invariant domain $\Omega$ in the form of \eqref{eq:270}, if 
	$\BU^{-}_{j-\frac12} \in \Omega$, $\BU^{+}_{j+\frac12} \in \Omega$, $\BU(x^{\tt GL}_{j,\ell}) \in \Omega$ for any $\ell = 1,\dots,L$,
	then the result of the FE step \eqref{eq:2121} with 
	\[
	\alpha \ge \max \big\{ \alpha_{\max}(\BU^{-}_{j-\frac12},\BU^{+}_{j-\frac12}), \alpha_{\max}(\BU^{-}_{j+\frac12},\BU^{+}_{j+\frac12}) \big\}
	\]
	satisfies $\oBU^*_j \in \Omega$, under the CFL condition 
	\begin{equation}\label{eq:2300}
		\alpha \frac{\Delta t}{\Delta x} \le \omega^{\tt GL}_1.
	\end{equation}
\end{lemma}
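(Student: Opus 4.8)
The plan is to exhibit $\oBU^*_j$ as a convex combination of states already known to lie in $\Omega$, and then invoke the convexity of $\Omega$ (\Cref{lem:466}). The building blocks will be of two kinds: the interior Gauss--Lobatto nodal values $\BU_j(x^{\tt GL}_{j,\ell})$, which lie in $\Omega$ by hypothesis, and a few generalized LF splitting states $\mathcal{S}_{\tt GLF}(\cdot,\cdot)$, which lie in $\Omega$ by the generalized LF splitting property (\Cref{thm:1471}). Since a convex combination of elements of a convex set stays in the set, establishing such a representation immediately yields $\oBU^*_j \in \Omega$.

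First I would rewrite the cell average using the $L$-point Gauss--Lobatto rule on $I_j$, choosing $L$ so that it is exact for degree $k$ (i.e. $2L-3\ge k$) and recalling that its first and last nodes are the two cell interfaces, carrying the common weight $\omega^{\tt GL}_1=\omega^{\tt GL}_L$. This gives $\oBU_j = \omega^{\tt GL}_1(\BU^+_{j-\frac12}+\BU^-_{j+\frac12}) + \sum_{\ell=2}^{L-1}\omega^{\tt GL}_\ell\BU_j(x^{\tt GL}_{j,\ell})$ with $\BU^+_{j-\frac12}=\BU_j(x^{\tt GL}_{j,1})$ and $\BU^-_{j+\frac12}=\BU_j(x^{\tt GL}_{j,L})$. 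Substituting this and the LF flux, written as $\frac12[\BF(\BU^-)+\BF(\BU^+)]-\frac\alpha2(\BU^+-\BU^-)$, into the cell-average update \eqref{eq:2121}, I would collect the coefficient of every interface trace and of every $\BF$ of a trace. The key algebraic observation is that each trace then carries a definite $\BF$-sign: the two left-interface traces appear in the $\mathcal{S}^+_{\tt LF}$ direction and the two right-interface traces in the $\mathcal{S}^-_{\tt LF}$ direction. Pairing one ``$+$'' trace with one ``$-$'' trace reconstitutes precisely a $\mathcal{S}_{\tt GLF}$ state, while the cell's own two traces remain as standalone terms.

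Carrying out this bookkeeping, I expect
\begin{equation*}
	\oBU^*_j = \sum_{\ell=2}^{L-1}\omega^{\tt GL}_\ell\,\BU_j(x^{\tt GL}_{j,\ell}) + \Big(\omega^{\tt GL}_1 - \alpha\tfrac{\Delta t}{\Delta x}\Big)\big(\BU^+_{j-\frac12}+\BU^-_{j+\frac12}\big) + \alpha\tfrac{\Delta t}{\Delta x}\,\big[\mathcal{S}_{\tt GLF}(\BU_a,\BU_b)+\mathcal{S}_{\tt GLF}(\BU_c,\BU_d)\big],
\end{equation*}
where $\{\BU_a,\BU_b,\BU_c,\BU_d\}$ is a pairing of the four interface traces in which each $\mathcal{S}_{\tt GLF}$ combines one left-interface trace (in the ``$+$'' slot) with one right-interface trace (in the ``$-$'' slot). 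The CFL condition \eqref{eq:2300}, namely $\alpha\frac{\Delta t}{\Delta x}\le\omega^{\tt GL}_1$, ensures $\omega^{\tt GL}_1-\alpha\frac{\Delta t}{\Delta x}\ge0$; combined with the positivity of the interior weights and the identity $\sum_\ell\omega^{\tt GL}_\ell=1$ with $\omega^{\tt GL}_1=\omega^{\tt GL}_L$, the coefficients are nonnegative and sum to one, so the right-hand side is a genuine convex combination. Applying \Cref{thm:1471} to each $\mathcal{S}_{\tt GLF}$ term and the hypotheses $\BU_j(x^{\tt GL}_{j,\ell})\in\Omega$ places every block in $\Omega$, and convexity concludes.

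The main obstacle is the last verification: ensuring that the prescribed $\alpha$ dominates the threshold $\alpha_{\max}$ of each $\mathcal{S}_{\tt GLF}$ pair that actually appears, so that \Cref{thm:1471} applies. This is exactly where the ordinary LF splitting \eqref{eq:309} would have sufficed for a standard system but fails here: because \eqref{eq:309} does not hold for \eqref{system}, no single trace can be split in isolation, and one is forced to pair traces and control the pairing-dependent threshold $\alpha_{\max}$ in \eqref{eq:674}. The delicate point is therefore to check that the bound on $\alpha$ assumed in the statement — built from the interface quantities $\alpha_{\max}(\BU^-_{j\mp\frac12},\BU^+_{j\mp\frac12})$ — indeed bounds the thresholds of the (cross-trace) pairs appearing in the decomposition, which relies on the monotonicity of $\alpha_{\rm std}$ and of $\overline s$ in \eqref{eq:686} with respect to the states and the density/label box they generate. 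A minor preliminary step is to fix $L$ so that the Gauss--Lobatto rule is exact to degree $k$ and includes the endpoints, which is what makes the interface traces available inside the quadrature.
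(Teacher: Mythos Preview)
Your approach is essentially the paper's: use exactness of the $L$-point Gauss--Lobatto rule to write $\oBU_j$ as a weighted sum of nodal values, unpack the two LF fluxes, and reassemble into a convex combination of interior nodal values, two endpoint values with weight $\omega^{\tt GL}_1-\lambda$, and two $\mathcal{S}_{\tt GLF}$ states with weight $\lambda=\alpha\Delta t/\Delta x$; then invoke \Cref{thm:1471} and the convexity of $\Omega$. The paper makes the pairing explicit as $\BU_j^\pm := \mathcal{S}_{\tt GLF}(\BU^\pm_{j-\frac12},\BU^\pm_{j+\frac12})$, the ``same-sign'' pairing across the two interfaces; you leave the pairing undetermined but describe the correct slot constraints.

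The verification you flag at the end is real. The hypothesis controls the per-interface thresholds $\alpha_{\max}(\BU^-_{j\pm\frac12},\BU^+_{j\pm\frac12})$, whereas applying \Cref{thm:1471} to the pairs that actually arise in the decomposition requires bounding the cross-interface thresholds $\alpha_{\max}(\BU^\pm_{j-\frac12},\BU^\pm_{j+\frac12})$. The paper does not address this either: it simply asserts ``\Cref{thm:1471} implies that $\BU_j^\pm\in\Omega$'' without checking that the assumed bound on $\alpha$ dominates $\alpha_{\max}$ for these cross-interface pairs. Your suggested fix via monotonicity of $\alpha_{\rm std}$ and $\overline{s}$ is not evidently sufficient, since $\overline{s}$ in \eqref{eq:686} depends on a box whose endpoints $\rho_2,z_2$ involve the non-symmetric, pair-dependent quantity $(\hat\rho\hat v-\check\rho\check v)/(2\alpha_{\rm std})$, so a direct comparison between the per-interface and cross-interface thresholds does not follow from anything proved in the paper. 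In short: your argument matches the paper's structure exactly, and the scruple you raise identifies a point the paper's own proof leaves unresolved.
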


\begin{proof}
	Since $\BU_j(x) \in \mathbb{P}^k$, the $L$-point Gauss--Lobatto quadrature holds exactly:
	\[
	\oBU_j
	= \frac{1}{\Delta x} \int_{I_j} \BU_j(x) \, \textrm{d}x
	= \sum_{\ell = 1}^L \omega^{\tt GL}_\ell \BU_j(x^{\tt GL}_{j,\ell}).
	\]
	Since $\omega^{\tt GL}_1 = \omega^{\tt GL}_L$, $x^{\tt GL}_{j,1} = x_{j-\frac12}$, $x^{\tt GL}_{j,L} = x_{j+\frac12}$, we can rewrite \eqref{eq:2121} as
	\begin{equation}\label{eq:2265}
		\oBU^*_j
		=
		\sum_{\ell = 2}^L 
		\omega^{\tt GL}_\ell \BU_j(x^{\tt GL}_{j,\ell})
		+
		(\omega^{\tt GL}_1-\lambda)
		\BU_j(x^{\tt GL}_{j,1})
		+
		(\omega^{\tt GL}_1-\lambda)
		\BU_j(x^{\tt GL}_{j,L})
		+
		\lambda \BU_j^{-}
		+
		\lambda \BU_j^{+},
	\end{equation}
	where $\lambda := \alpha \frac{\Delta t}{\Delta x} \le \omega^{\tt GL}_1$, and
	$\BU_j^{\pm} := \mathcal{S}_{\tt GLF}(\BU^{\pm}_{j-\frac12},\BU^{\pm}_{j+\frac12})$.
	\Cref{thm:1471} implies that $\BU^{\pm}_j \in \Omega$. 
	The convex combination \eqref{eq:2265} implies that $\oBU^*_j \in \Omega$ due to the convexity of $\Omega$ (\Cref{lem:466}).
	The proof is completed.
\end{proof}

We are now ready to prove the BP property of BP-OEDG scheme \eqref{BPOE} with respect to the global and local invariant domains in the following two theorems.

\begin{theorem}[global BP property]\label{lem:2350}
	If the global invariant domains $\Omega^{n,s}$ defined in \eqref{eq:1795}--\eqref{eq:1807} are considered, then the scheme \eqref{BPOE} with $\alpha \ge \max_j \alpha_{\max}(\BU^-_{j+\frac12},\BU^+_{j+\frac12})$ and under the CFL condition \eqref{eq:2300} is globally BP, namely,
	\begin{equation}\label{eq:2177}
		\BU^n_j(x^{\tt GL}_{j,\ell}) \in \Omega^{n} 
		~~ \forall j ~ \forall \ell
		~~ \Longrightarrow ~~
		\BU^{n+1}_j(x^{\tt GL}_{j,\ell}) \in \Omega^{n+1} 
		~~ \forall j ~ \forall \ell.
	\end{equation}
	
\end{theorem}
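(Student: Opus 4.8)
The plan is to prove the implication \eqref{eq:2177} by a forward induction through the three RK stages inside the $n$-th time step, carefully tracking which member of the nested family $\{\Omega^{n,s}\}_{s=0}^{3}$ each relevant quantity belongs to. The central bookkeeping device is the monotone nesting $\Omega^{n,0} \subseteq \Omega^{n,1} \subseteq \Omega^{n,2} \subseteq \Omega^{n,3} = \Omega^{n+1}$ supplied by \Cref{lem:1889}: since the global bounds can only relax as $s$ grows, any state lying in an earlier-stage domain automatically lies in every later one, which lets me collect all states appearing at a given stage into a single common domain before invoking the convexity and splitting results.

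First I would establish the single-stage claim: if $\BU_j^{n,s-1}(x_{j,\ell}^{\tt GL}) \in \Omega^{n,s-1}$ for all $j,\ell$, then $\BU_j^{n,s}(x_{j,\ell}^{\tt GL}) \in \Omega^{n,s}$ for all $j,\ell$. Because $x_{j,1}^{\tt GL} = x_{j-\frac12}$ and $x_{j,L}^{\tt GL} = x_{j+\frac12}$, every interior interface trace feeding the numerical fluxes is itself a Gauss--Lobatto nodal value of a neighboring cell, hence lies in $\Omega^{n,s-1}$; the two exterior boundary traces lie in $\Omega^{n,s}$ by \Cref{lem:1872}. By the nesting, all these states sit in the common domain $\Omega^{n,s}$. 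Applying \Cref{lem:2154} to the forward-Euler operator $\mathrm{id} + \Delta t_n \mathcal{L}$ acting on $\BU_h^{n,s-1}$ — whose $\alpha$-hypothesis is met by the stated choice of $\alpha$ (the maximum of $\alpha_{\max}$ over all interfaces at that stage) and whose time-step restriction is exactly the CFL condition \eqref{eq:2300} — shows that the cell average of $\BU_h^{n,s-1} + \Delta t_n \mathcal{L}(\BU_h^{n,s-1})$ lies in $\Omega^{n,s}$.

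Next I would account for the RK convex combination and the OE/limiter post-processing. For $s \ge 2$ the stage value $\toBU_j^{n,s}$ is a convex combination of $\oBU_j^n$ and the forward-Euler average just shown to lie in $\Omega^{n,s}$; since $\oBU_j^n$ is itself an $L$-point Gauss--Lobatto (hence convex) combination of the nodal values $\BU_j^n(x_{j,\ell}^{\tt GL}) \in \Omega^{n,0} \subseteq \Omega^{n,s}$, the convexity established in \Cref{lem:466} yields $\toBU_j^{n,s} \in \Omega^{n,s}$ (the case $s=1$ being the bare forward-Euler average). \Cref{lem:1625b} then guarantees that the OE operator leaves this cell average unchanged, so $\hoBU_j^{n,s} = \toBU_j^{n,s} \in \Omega^{n,s} = \Omega_j^{n,s}$. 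With the cell average in the target domain, \Cref{lem:1775} ensures the BP limiter returns a polynomial whose Gauss--Lobatto nodal values all lie in $\Omega^{n,s}$, closing the single-stage claim. Iterating from $s=1$ to $s=3$, with the base case furnished by the hypothesis on $\BU_j^n$ and using $\BU_h^{n+1} = \BU_h^{n,3}$ together with $\Omega^{n,3} = \Omega^{n+1}$, delivers \eqref{eq:2177}.

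The main obstacle I anticipate is not any individual estimate — those are delegated to \Cref{lem:2154}, \Cref{lem:1775}, and \Cref{thm:1471} — but rather the consistent domain bookkeeping across stages: one must check that the interior traces, the exterior boundary traces (living a priori in a different $\Omega^{n,s}$), the previous-step cell average $\oBU_j^n$, and the forward-Euler average can all be viewed simultaneously inside one convex set before the convex-combination argument is applied. This is exactly where the monotone nesting of \Cref{lem:1889} and the cell-average invariance of the OE operator (\Cref{lem:1625b}) become indispensable; without them neither the RK convex-combination step nor the passage from cell averages back to nodal values through the limiter would close.
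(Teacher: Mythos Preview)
Your proposal is correct and follows essentially the same route as the paper's proof: forward induction through the RK stages using the nesting of \Cref{lem:1889}, the Gauss--Lobatto convex decomposition of $\oBU_j^n$, the forward-Euler BP property (\Cref{lem:2154}) together with \Cref{lem:1872} for the exterior traces, invariance of cell averages under the OE operator (\Cref{lem:1625b}), and the BP limiter conclusion (\Cref{lem:1775}). You are in fact more explicit than the paper about the convex-combination handling of stages $s=2,3$, which the paper dismisses with ``similar arguments''; otherwise the two arguments coincide.
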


\begin{proof}
	Since $\BU_j^n \in (\mathbb{P}^k)^3$ for all $j$, the $L$-point Gauss--Lobatto quadrature holds exactly, namely,
	$
	\oBU_j^n 
	= 
	\frac{1}{\Delta x} \int_{I_j} \BU^n_j(x) \, \textrm{d} x
	=
	\sum_{\ell = 1}^{L} \omega^{\tt GL} \BU^n_j(x^{\tt GL}_{j,\ell}),
	$
	which decomposes $\oBU_j^n$ as a convex combination of $\BU^n_j(x^{\tt GL}_{j,\ell}) \in \Omega^n$, which leads to $\oBU_j^n \in \Omega^{n} = \Omega^{n,0} \subseteq \Omega^{n,1}$ due to the convexity of $\Omega$ (\Cref{lem:466}) and the hierarchy of global invariant domains (\Cref{lem:1889}).
	Additionally, all the cell-edge values of $\BU^n_h$ belong to $\Omega^{n,1}$, since
	(i) For $j = 1,\dots,N_x$, $\BU^n_h(x_{j-\frac12}^{+}) = \BU^n_j(x^{\tt GL}_{j,1}) \in \Omega^{n} = \Omega^{n,0} \subseteq \Omega^{n,1}$, $\BU^n_h(x_{j+\frac12}^{-}) = \BU^n_j(x^{\tt GL}_{j,L}) \in \Omega^{n} = \Omega^{n,0} \subseteq \Omega^{n,1}$;
	(ii) By \Cref{lem:1872}, $\BU^n_h(x_{\frac12}^{-}) \in \Omega^{n,1}$ and $\BU^n_h(x_{N_x+\frac12}^{+}) \in \Omega^{n,1}$.
	Therefore, we can apply \Cref{lem:2154} on $\BU^n_h$ to obtain $\toBU^{n,1}_j \in \Omega^{n,1}$ for all $j$,
	and then \Cref{lem:1625b} to obtain $\hoBU^{n,1}_j \in \Omega^{n,1}$ for all $j$.
	After applying the BP limiter to $\hBU^{n,1}_h$, \Cref{lem:1775} implies that the first RK stage is BP, namely, $\oBU^{n,1}_j \in \Omega^{n,1}, \BU^{n,1}_h(x^{\tt GL}_{j,\ell}) \in \Omega^{n,1}$ for all $j$ and $\ell$.
	
	Using similar arguments, we can prove the BP properties of the second and third RK stages:
	$\BU^{n,s}_h(x^{\tt GL}_{j,\ell}) \in \Omega^{n,s}$ for any $j$, $\ell$, and $s = 2,3$.
	Noticing that $\BU^{n+1}_h = \BU^{n,3}_h$ and $\Omega^{n+1} = \Omega^{n,3}$, the claim \eqref{eq:2177} is proven.
\end{proof}

\begin{theorem}[local BP property]\label{lem:2394}
	If the local invariant domains $\Omega^{n,s}_j$ defined in \eqref{eq:1851} are considered, then the scheme \eqref{BPOE} with $\alpha \ge \max_j \alpha_{\max}(\BU^-_{j+\frac12},\BU^+_{j+\frac12})$ and under the CFL condition \eqref{eq:2300} is locally BP, namely,
	\begin{equation}\label{eq:2274}
		\BU^n_j(x^{\tt GL}_{j,\ell}) \in \Omega^{n}_j
		~~ \forall j ~ \forall \ell
		~~ \Longrightarrow ~~
		\BU^{n+1}_j(x^{\tt GL}_{j,\ell}) \in \Omega^{n+1}_j
		~~ \forall j ~ \forall \ell.
	\end{equation}
\end{theorem}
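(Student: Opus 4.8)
The plan is to follow the inductive, stage-by-stage template of the global result \Cref{lem:2350}, but now tracking the cell-dependent domains $\Omega^{n,s}_j$. The goal is to show, for each RK stage $s=1,2,3$, that $\BU^{n,s}_h(x^{\tt GL}_{j,\ell})\in\Omega^{n,s}_j$ for all $j,\ell$; taking $s=3$ and invoking $\BU^{n+1}_h=\BU^{n,3}_h$ together with the convention $\Omega^{n,3}_j=\Omega^{n+1}_j$ then yields \eqref{eq:2274}. As in the global proof, each stage splits into a forward-Euler substep (to which \Cref{lem:2154} applies cellwise), the OE operator (which preserves cell averages by \Cref{lem:1625b}), and the BP limiter (which propagates a BP cell average to the Gauss--Lobatto nodes by \Cref{lem:1775}). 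A recurring point is that every local domain entering \Cref{lem:2154} must be an intersection of sets of the form \eqref{eq:270}, so that it is convex and again of that form by \Cref{lem:466}.

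I would first dispatch Stage 1, which is clean because $\Omega^{n,1}_j=\Omega^{n,1}\cap\Omega^{n,0}_{j,*}$ involves no union. The forward-Euler update producing $\toBU^{n,1}_j$ takes as inputs the cell-$j$ Gauss--Lobatto traces of $\BU^n_h$ and the two neighbouring interface traces $\BU^n_{j\mp1}(x_{j\mp\frac12})$. By the stage-$0$ hypothesis these lie in the neighbouring domains $\Omega^n_{j\mp1}$, hence in $\Omega^{n,1}$ since $\Omega^n_{j\mp1}\subseteq\Omega^n\subseteq\Omega^{n,1}$ by the local-to-global containment and the hierarchy \Cref{lem:1889}; and they lie in $\Omega^{n,0}_{j,*}$ essentially by construction, because the bounds defining $\Omega^{n,0}_{j,*}$ in \eqref{eq:1982} are the minima/maxima of the Riemann invariants over precisely these points (the cell-$j$ nodal values, the cubic-interpolant extrema, and the two interface traces), subsequently widened in \eqref{eq:1927}. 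Boundary cells are handled as in the global case: the boundary trace lies in $\Omega^{n,1}$ by \Cref{lem:1872} and in $\Omega^{n,0}_{j,*}$ because \eqref{eq:1982} explicitly includes it. Then \Cref{lem:2154} gives $\toBU^{n,1}_j\in\Omega^{n,1}_j$, and \Cref{lem:1625b} followed by \Cref{lem:1775} finishes Stage 1.

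The substantive difficulty lies in Stages 2 and 3, where $\Omega^{n,s}_j=\Omega^{n,s}\cap\big(\Omega^{n,s-1}_j\cup\Omega^{n,s-1}_{j,*}\big)$ contains a \emph{non-convex} union. Writing the stage cell average as the convex combination $\toBU^{n,s}_j=(1-\beta)\,\oBU^n_j+\beta\,\oBU^{*}_j$ dictated by \eqref{BPOE} (with $\oBU^{*}_j$ the forward-Euler update of $\BU^{n,s-1}_h$ on $I_j$), I would first show, exactly as in Stage 1 but with the stage-$(s-1)$ local bounds, that $\oBU^{*}_j\in\Omega^{n,s-1}\cap\Omega^{n,s-1}_{j,*}$, and recall $\oBU^n_j\in\Omega^{n,s-1}_j$ carried from the earlier stages; via \Cref{lem:1889} both pieces then lie in $\Omega^{n,s}$, which already secures the convex factor. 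The principal obstacle is the union factor: $\oBU^n_j$ sits in $\Omega^{n,s-1}_j$ while $\oBU^{*}_j$ sits in $\Omega^{n,s-1}_{j,*}$, and a convex combination of points from two distinct convex sets need not lie in their union. To close this I would argue that one of the two pieces actually lies in the \emph{intersection} $\Omega^{n,s-1}_j\cap\Omega^{n,s-1}_{j,*}$; then, the other piece lying in a common component, convexity of that single component forces the whole segment between them, and hence the RK combination, to remain inside it. This is precisely where the multiplicative $\sqrt{\Delta x}$-relaxation built into \eqref{eq:1927} is indispensable: under the CFL condition \eqref{eq:2300} the cell averages of consecutive RK stages differ by $O(\Delta t)=O(\Delta x)$, and the relaxation supplies the strictly larger $O(\sqrt{\Delta x})$ slack needed to certify membership in the widened local box.

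I expect this union step to be the crux of the whole argument, with the remainder being bookkeeping. Once $\toBU^{n,s}_j\in\Omega^{n,s}_j$ is established for $s=2,3$, \Cref{lem:1625b} gives $\hoBU^{n,s}_j=\toBU^{n,s}_j\in\Omega^{n,s}_j$, and \Cref{lem:1775} upgrades this cell-average membership to $\BU^{n,s}_h(x^{\tt GL}_{j,\ell})\in\Omega^{n,s}_j$ for all $j,\ell$, completing the induction and therefore \eqref{eq:2274}.
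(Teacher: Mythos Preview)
Your treatment of Stage~1 is essentially the paper's argument: verify that the in-cell Gauss--Lobatto traces and the two neighbouring interface traces all lie in $\Omega^{n,1}_j=\Omega^{n,1}\cap\Omega^{n,0}_{j,*}$, then invoke \Cref{lem:2154}, \Cref{lem:1625b}, and \Cref{lem:1775}. That part is fine.

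Where you diverge from the paper---and create a genuine gap---is in Stages~2 and~3. You read the ``$\cup$'' in \eqref{eq:1917} as a literal set-theoretic union, conclude that $\Omega^{n,s}_j$ is non-convex, and then attempt to rescue the SSP convex combination by an asymptotic argument based on the $\sqrt{\Delta x}$ relaxation in \eqref{eq:1927}. But the paper's definition \eqref{eq:1851} \emph{explicitly} writes $\Omega^{n,s}_j$ as a box of the form \eqref{eq:270} (with parameters $(v_{\min})^{n,s}_j$, $(w_{\min})^{n,s}_j$, etc.), for \emph{every} $s$. The set operations in \eqref{eq:1917} are therefore to be read component-wise on the interval bounds: the ``union'' of two boxes means taking the min of the lower bounds and the max of the upper bounds for each Riemann invariant, yielding another box. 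With this reading, $\Omega^{n,s}_j$ is always convex by \Cref{lem:466}, and Stages~2 and~3 go through by exactly the Stage-1 argument: the in-cell nodal values of $\BU^{n,s-1}_h$ lie in $\Omega^{n,s-1}_j$ (induction) and hence in the enlarged box $\Omega^{n,s}_j$; the neighbour interface traces lie in $\Omega^{n,s-1}_{j,*}$ by construction of \eqref{eq:1982} and hence in $\Omega^{n,s}_j$; and $\oBU^n_j\in\Omega^{n,1}_j\subseteq\Omega^{n,s}_j$. Then \Cref{lem:2154} and convexity handle the SSP combination directly. This is precisely why the paper can dismiss Stages~2--3 with ``similar arguments''.

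Your proposed $\sqrt{\Delta x}$ mechanism is therefore unnecessary, and as stated it is also incomplete: you do not actually prove that one of the two pieces lands in the intersection, and any such argument would introduce a smallness hypothesis on $\Delta x$ that is absent from the theorem. The fix is simply to recognise that $\Omega^{n,s}_j$ is a box for every $s$, as \eqref{eq:1851} states.
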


\begin{proof}
	Let us consider four cases:
	\begin{itemize}[leftmargin= 6mm]
		\item[(i)] If $j = 2,\dots,N_x$, then from \eqref{eq:1851}--\eqref{eq:1982}, we can obtain
		\begin{equation*}
			\left.
			\begin{aligned}
				&& \BU_h^{n,0}(x_{j-\frac12}^{-}) = \BU^{n}_{j-1}(x^{\tt GL}_{j-1,L})\\
				&& \BU^{n}_{j-1}(x^{\tt GL}_{j-1,L}) \in \Omega^{n, 0}_{j-1} \subseteq \Omega^{n,0} \subseteq \Omega^{n,1} \\
				&& \BU^{n}_{j-1}(x^{-}_{j-\frac12}) \in \Omega^{n, 0}_{j, *}
			\end{aligned}
			\right\}
			\; \Rightarrow \;
			\BU_h^{n,0}(x_{j-\frac12}^{-}) \in \Omega^{n,1} \cap \Omega^{n, 0}_{j, *} = \Omega^{n, 1}_{j}.
		\end{equation*}
		\item[(ii)] If $j = 1$, $\BU^{n,0}_h(x_{\frac12}^{-}) \in \Omega^{n,1} \cap \Omega^{n,0}_{1,*} = \Omega^{n,1}_{1}$. 
		\item[(iii)] If $j = 1,\dots,N_x-1$, then from \eqref{eq:1851}--\eqref{eq:1982}, we can obtain
		\begin{equation*}
			\left.
			\begin{aligned}
				&& \BU_h^{n,0}(x_{j+\frac12}^{+}) = \BU^{n}_{j+1}(x^{\tt GL}_{j+1,1}) \\
				&& \BU^{n}_{j+1}(x^{\tt GL}_{j+1,1}) \in \Omega^{n, 0}_{j+1} \subseteq \Omega^{n,0} \subseteq \Omega^{n,1} \\
				&& \BU^{n}_{j+1}(x^{+}_{j+\frac12}) \in \Omega^{n, 0}_{j, *}
			\end{aligned}
			\right\}
			\;\Rightarrow \;
			\BU_h^{n,0}(x_{j+\frac12}^{+}) \in \Omega^{n,1} \cap \Omega^{n, 0}_{j, *} = \Omega^{n, 1}_{j}.
		\end{equation*}
		\item[(iv)] If $j = N_x$, $\BU^{n,0}_h(x_{N_x +\frac12}^{+}) \in \Omega^{n,1} \cap \Omega^{n,0}_{N_x,*} = \Omega^{n,1}_{N_x}$.
	\end{itemize}
	To summarize, $\BU_h^{n,0}(x_{j-\frac12}^{-}) \in \Omega^{n,1}_j$ and $\BU_h^{n,0}(x_{j+\frac12}^{+}) \in \Omega^{n,1}_j$ for $j = 1,\dots,N_x$.
	On the other hand, \eqref{eq:1851}--\eqref{eq:1927} implies that 
	$\BU_h(x^{\tt GL}_{j,\ell}) \in \Omega^{n,0} \cap \Omega^{n,0}_{j,*} \subseteq \Omega^{n,1} \cap \Omega^{n,0}_{j,*} = \Omega^{n,1}_j$ for all $j$.
	Therefore, we can use \Cref{lem:2154} to obtain $\toBU^{n,1}_j \in \Omega^{n,1}_j$ for all $j$, and then \Cref{lem:1625b} to obtain $\hoBU^{n,1}_j \in \Omega^{n,1}_j$ for all $j$.
	Following this, we may apply \Cref{lem:1775} to conclude with the local BP property of the first RK stage, namely, $\BU^{n,1}_j(x^{\tt GL}_{j,\ell}) \in \Omega^{n,1}_j$ for all $j$ and $\ell$.
	
	Via the similar arguments, we can prove the local BP properties of the second and the third RK stages:
	$\BU^{n,s}_h(x^{\tt GL}_{j,\ell}) \in \Omega^{n,s}$ for all $j$, $\ell$, and $s = 2,3$.
	Noticing $\BU^{n+1}_h = \BU^{n,3}_h$  and $\Omega^{n+1} = \Omega^{n,3}$, the claim \eqref{eq:2274} is proven.
\end{proof}

\begin{remark}
	The global and local BP properties can be similarly proven for other high-order SSP (RK or multi-step) time discretizations that can be expressed as a convex combination of forward Euler steps.
\end{remark}

\section{Numerical Examples}\label{sec:num}

In this section, we conduct numerical experiments to demonstrate the high-order accuracy, BP property, and overall effectiveness of the proposed schemes for both the original and the AP ARZ models. 
We utilize the third-order DG spatial discretization (\( k=2 \)) combined with third-order SSP-RK time discretization. Unless noted otherwise, the CFL number is 0.08, and \( \kappa = \gamma+1 \). To generate the sampling mesh for estimating the global invariant domain at the initial time, we have used \( N_s = 1000 N_x \) in all the numerical results presented in this section. 
For comparison purposes, we consider four numerical schemes:
\begin{itemize}[topsep=3pt,itemsep=0pt,parsep=1pt,leftmargin=3mm]
	\item {\bf BP-OEDG schemes}: the numerical schemes introduced in \Cref{sec:BP-OEDG}. If not specified, the local invariant domains defined in \Cref{sec:local} are adopted;
	\item {\bf nonBP-OEDG schemes}: the OEDG schemes without the BP limiter;
	\item {\bf BPDG schemes}: the DG schemes with BP limiter and without OE procedure;
	\item {\bf conventional DG schemes}: the DG schemes without the OE procedure and BP limiter.
\end{itemize}

\subsection{Numerical experiments on a single road segment}\label{sec:single}

\begin{exa} \label{exam1}
	Consider the following smooth initial condition:
	\begin{equation*}
		\rho(x,0) = 0.05 \left(1 + \cos{\pi x} \right) + 10^{-8}, \quad v(x,0) \equiv 0.15, \quad c(x,0) \equiv 1,
	\end{equation*}
	with parameters \( v_{\rm ref} = 0.01 \), and \( \gamma \in \{0,1,2\} \). The computational domain is \([0,1]\) with periodic boundary conditions at both boundaries. The exact solution reads
	\begin{equation*}
		\rho(x, t) = 0.05 \Big[ 1 + \cos{(\pi (x - 0.15 \, t))} \Big] + 10^{-8}, \quad v(x, t) \equiv 0.15, \quad c(x,t) \equiv 1.
	\end{equation*}
	We compute the numerical solution up to \( t = 0.05 \) using the BP-OEDG scheme. The \( L_1 \) errors of the obtained numerical results are presented in Table \ref{Table1}, confirming the expected third-order accuracy. 
	Please note that near-vacuum states appear in the solution. Employing nonBP-OEDG schemes for this example may produce numerical solutions with negative density, which can lead to simulation failure. For example, if the nonBP-OEDG method is used, the simulation would fail due to negative density in the numerical results; see \Cref{tab:2830} for the failure time.

	\begin{table}[th!]
		\renewcommand\arraystretch{1.2}
		\caption{\sf Example \ref{exam1}, the $L_{1}$ errors in $\rho$ and the convergence rates for various $\gamma$ values.}
		\begin{center}
			\begin{tabular}{clclclc}
				
				\toprule[1.5pt]
				
				\multicolumn{1}{c}{\multirow{2} * {$N$}} & \multicolumn{2}{c}{ $\gamma = 0$} & \multicolumn{2}{c}{$\gamma = 1$} & \multicolumn{2}{c}{$\gamma = 2$} \\ 
				\cmidrule(lr){2-3} \cmidrule(lr){4-5} \cmidrule(lr){6-7}
				\multicolumn{1}{c}{} &$L_1$ error & order & $L_1$ error & order & $L_1$ error & order \\ 
				
				\midrule[1.5pt]
				
				10  &     9.95e-6  &       -- &     1.02e-4  &       -- &     1.02e-4  &       -- \\ 
				20  &     2.24e-6  &     2.15 &     1.73e-5  &     2.57 &     1.72e-5  &     2.57 \\ 
				40  &     4.64e-7  &     2.27 &     3.48e-7  &     5.63 &     3.48e-7  &     5.63 \\ 
				80  &     5.13e-8  &     3.18 &     4.68e-8  &     2.89 &     4.66e-8  &     2.90 \\ 
				160 &     7.70e-9  &     2.74 &     9.97e-9  &     2.23 &     5.54e-9  &     3.07 \\ 
				320 &     9.79e-10 &     2.97 &     5.65e-10 &     4.14 &     5.64e-10 &     3.30 \\
				
				\bottomrule[1.5pt]
				
			\end{tabular}
		\end{center}
		\label{Table1}
	\end{table}
	
	\begin{table}[th!]
		\renewcommand\arraystretch{1.2}
		\caption{\sf Example \ref{exam1}, the time of simulation failure due to negative density in numerical results of the nonBP-OEDG scheme.}
		\begin{center}
			\begin{tabular}{cccc}
				
				\toprule[1.5pt]
				
				$N$ & $\gamma = 0$ & $\gamma = 1$ & $\gamma = 2$ \\ 
				
				\midrule[1.5pt]
				
				160 
				& $t \approx 2.17 \times 10^{-2}$ 
				& $t \approx 1.81 \times 10^{-2}$
				& $t \approx 1.51 \times 10^{-2}$  
				\\
				320 
				& $t \approx 1.74 \times 10^{-2}$ 
				& $t \approx 9.03 \times 10^{-3}$ 
				& $t \approx 9.08 \times 10^{-3}$ 
				\\
				\bottomrule[1.5pt]
			\end{tabular}
		\end{center}
		\label{tab:2830}
	\end{table}
	
\end{exa}

\begin{exa} \label{exam2}
	This example verifies the effectiveness and necessity of the OE procedure for eliminating oscillations. 
	Consider the Riemann problem with initial data:
	\[
	(\rho,v,c)(x,0)
	=
	\begin{dcases}
		(0.8,0.1,1)
		&
		{\rm if }~ x \le 0.5,
		\\
		(0.5,0.2,1)
		&
		{\rm otherwise},
	\end{dcases}
	\]
	with $\gamma = 2$. 	
	\Cref{oscillationFig} presents the numerical solutions at $t = 0.4$ obtained using BP-OEDG, BPDG, and conventional DG schemes. 
	One can see that the BPDG and conventional DG schemes introduce spurious numerical oscillations, while the results of BP-OEDG scheme are free of oscillations. 
	
	\begin{figure}[t!]
		\begin{center}
			\includegraphics[width = 0.49\textwidth,trim=25 15 35 15,clip]{./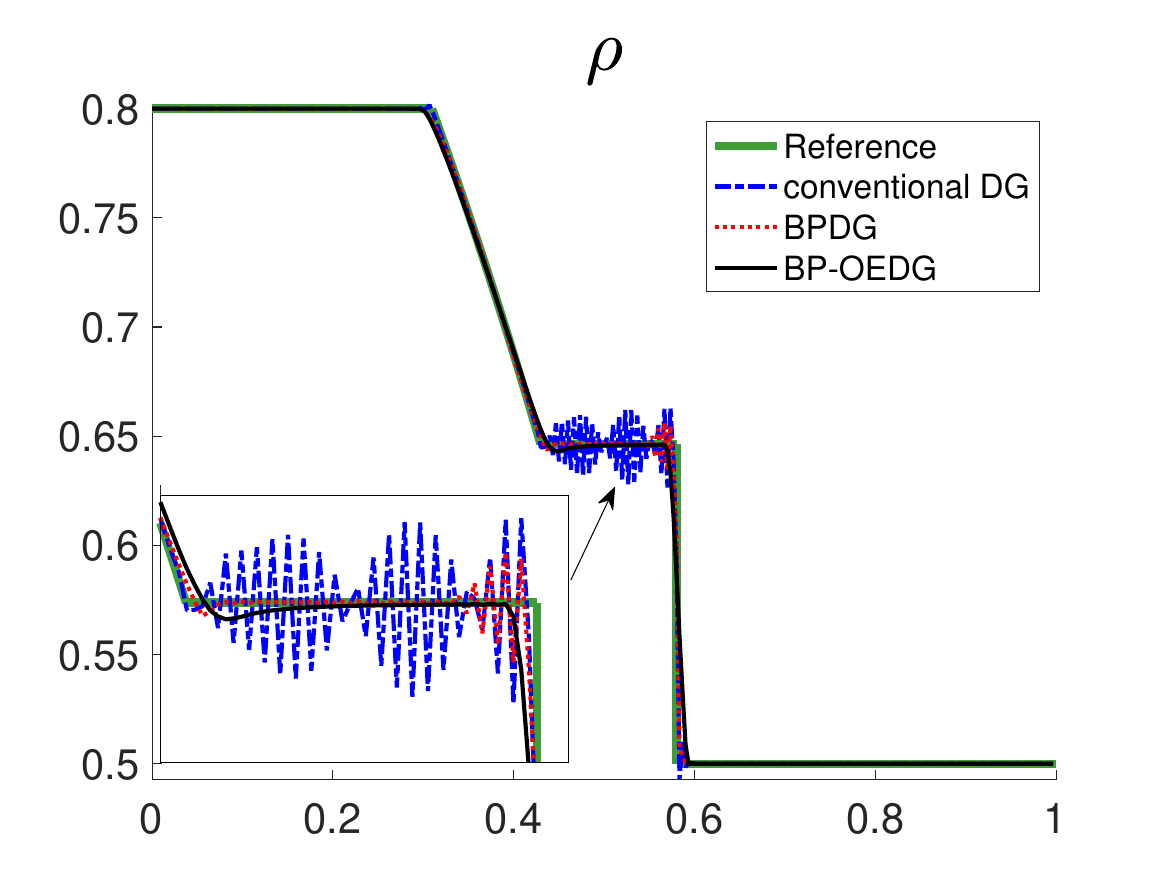}
			\includegraphics[width = 0.49\textwidth,trim=25 15 35 15,clip]{./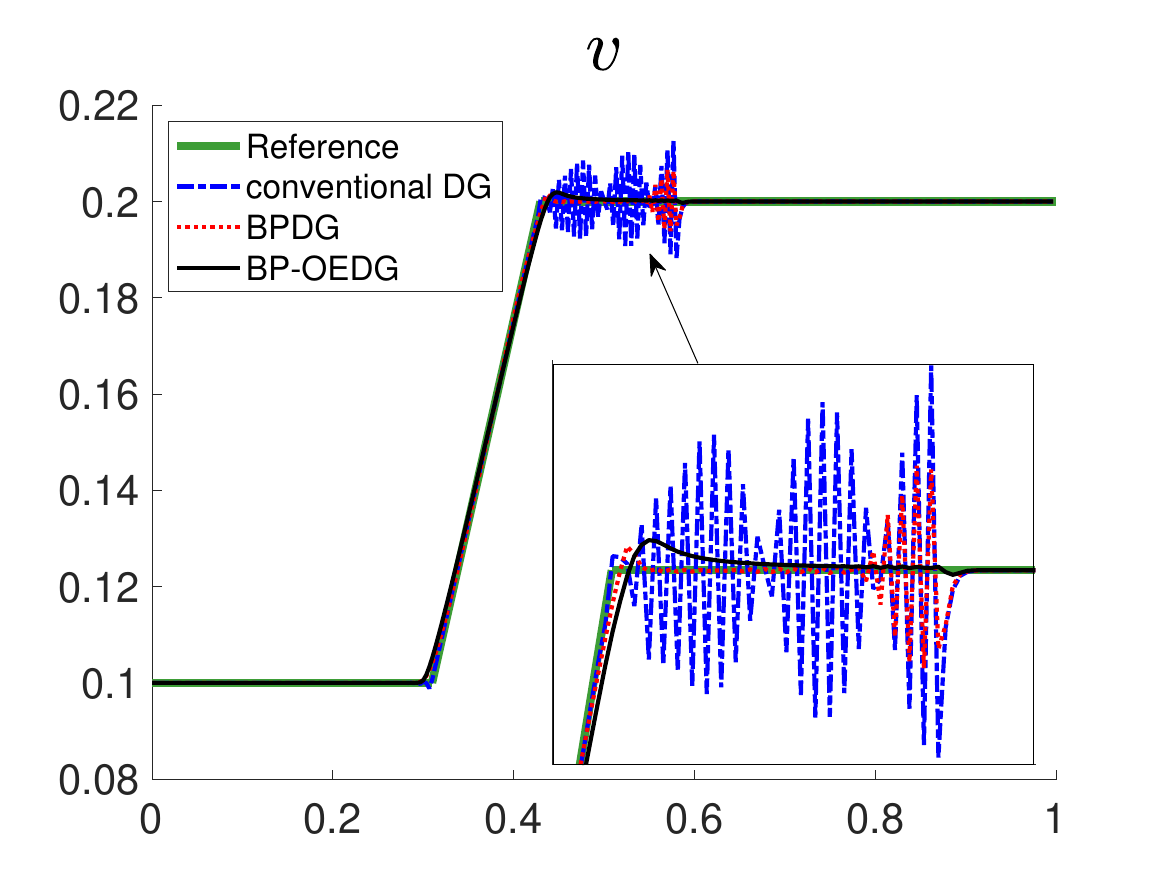}
			\caption{\sf Example \ref{exam2}, the numerical results of density and velocity obtained by using the BP-OEDG, BPDG, and conventional DG schemes, $t = 0.4$, $\Delta x = 1/300$.}
			\label{oscillationFig}
		\end{center}
	\end{figure}
	
\end{exa}

\begin{exa} \label{exam2b}
	In this example, we consider several Riemann problems with initial conditions in the following form:
	\begin{equation*}\label{eq:1802}
		(\rho,v,c)(x,0)
		\equiv
		\begin{dcases}
			(\,\rho_L,v_L,c_L)
			&
			{\rm if }~ x \le 0.65,
			\\
			(\,\rho_R,v_R,c_R)
			&
			{\rm otherwise}.
		\end{dcases}
	\end{equation*}
	Table \ref{tab:exam2b} lists several sets of parameters $(\rho_L,v_L,c_L,\rho_R,v_R,c_R)$.
	We use the BP-OEDG scheme to solve these test problems, and the obtained numerical results are depicted in Figures \ref{fig:exam2b_1}--\ref{fig:exam2b_2}, demonstrating the ability of the proposed BP-OEDG scheme to effectively capture various wave structures.
	
	\begin{table}[h!]
		\renewcommand\arraystretch{1.2}
		\centering
		\caption{\sf Example \ref{exam2b}, initial conditions of Riemann problem tests.}
		\begin{tabular}{l lll lll c}
			
			\toprule[1.5pt]
			
			\multirow{2}{*}{ Test } & \multicolumn{3}{c}{Riemann problem left state} & \multicolumn{3}{c}{Riemann problem right state} & \multirow{2}{*}{ $\gamma$ values } \\
			
			\cmidrule(lr){2-4} \cmidrule(lr){5-7}
			
			& $\rho_L$ & $v_L$ & $c_L$ & $\rho_R$ & $v_R$ & $c_R$ & \\
			
			\midrule[1.5pt]
			
			{T1a} & 0.5 & 0.2 & 1.0 & 0.9 & $2\times 10^{-11}$ & 1.0 & 0 and 2 \\
			{T1b} & 0.5 & 0.2 & 1.1 & 0.9 & $2\times 10^{-11}$ & 0.9 & 1 and 2 \\
			{T2a} & $1\times10^{-12}$ & 0.4 & 1.0  & 0.8 & 0.1 & 1.0 & 0, 1, and 2 \\ 
			{T2b} & $1\times10^{-12}$ & 10 & 1.0  & 0.8 & 0.5 & 1.0 & 0, 1, and 2 \\ 
			{T3 } & 0.8 & $5\times 10^{-12}$ & 1.0  & $1\times10^{-11}$ & 0.25 & 1.0 & 0 and 2 \\ 
			
			\bottomrule[1.5pt]
			
		\end{tabular}
		\label{tab:exam2b}
	\end{table}
	
	\begin{figure}[th!]
		\centerline{
			\begin{subfigure}[t][][t]{0.46\textwidth}
				\centerline{
					\includegraphics[trim=21 15 41 15,clip,width=0.5\textwidth]{./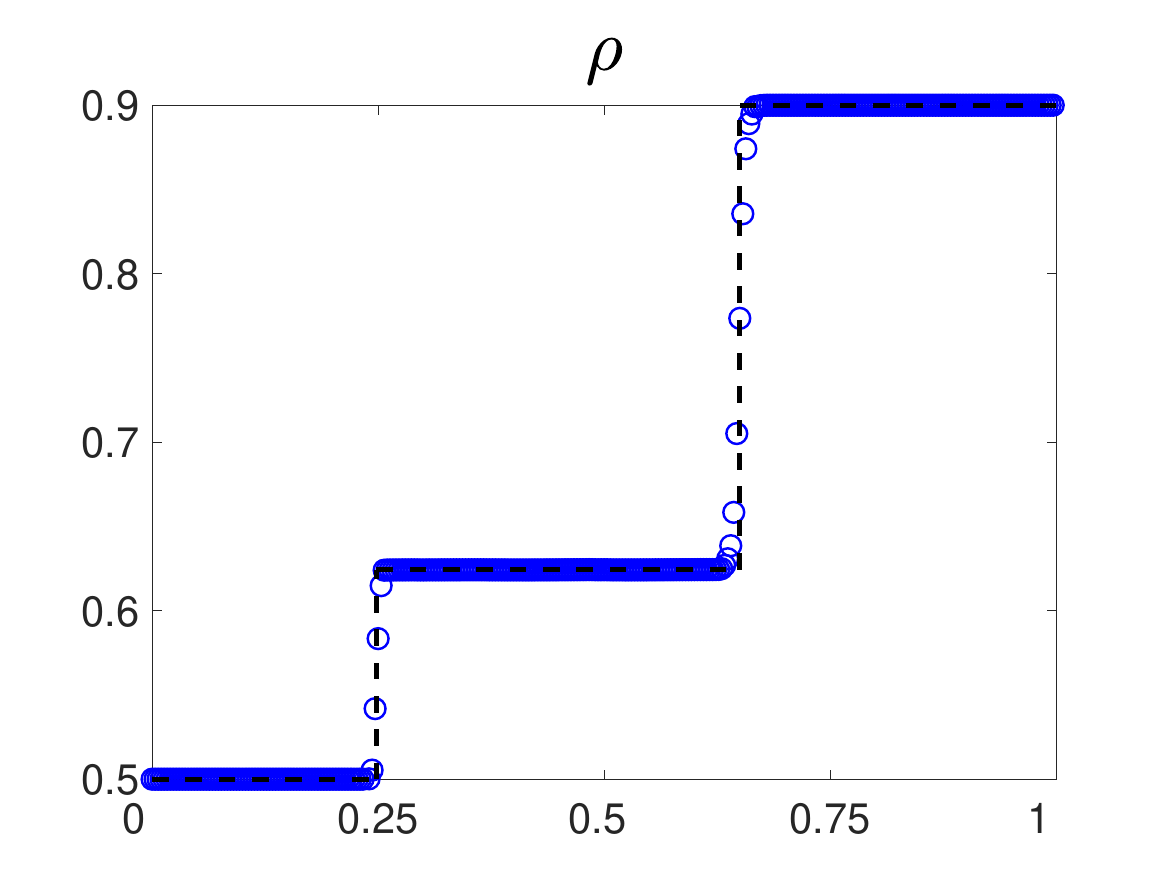}
					\includegraphics[trim=21 15 41 15,clip,width=0.5\textwidth]{./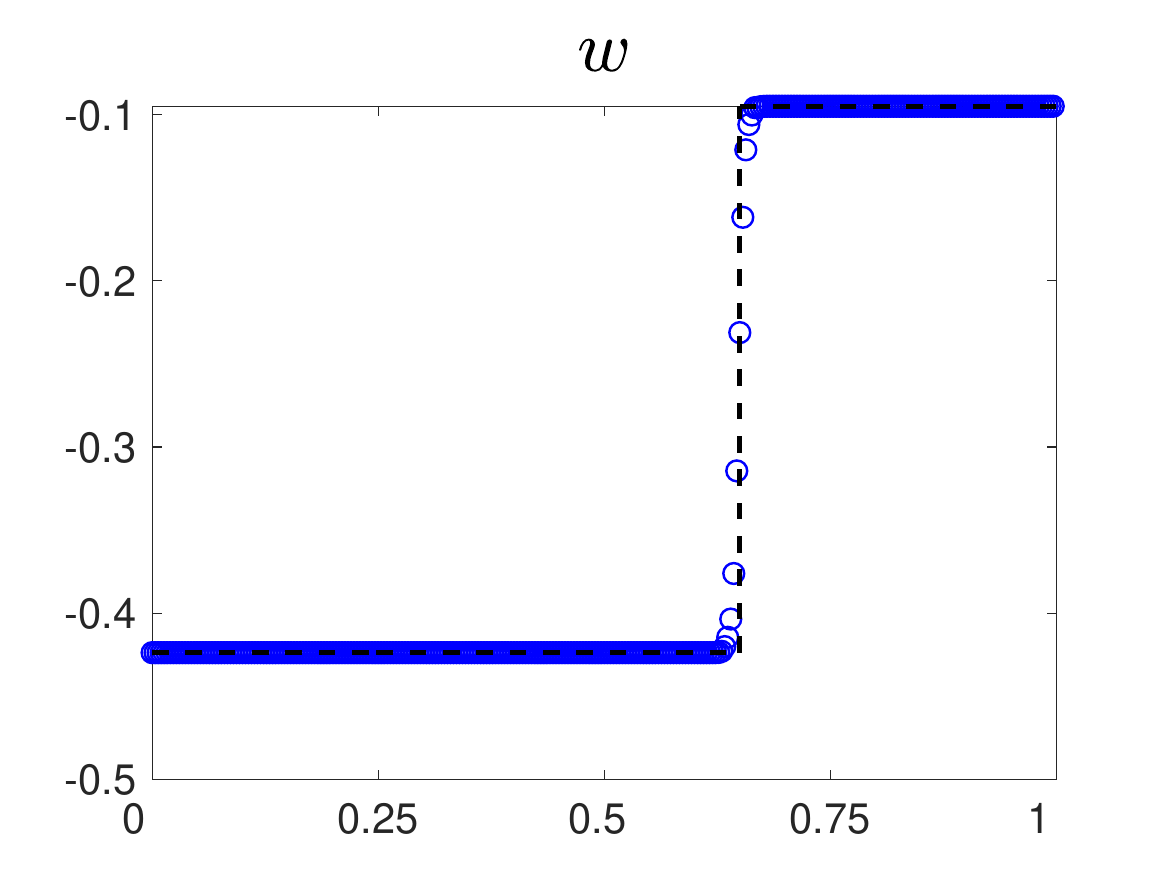}
				}
				\centerline{
					\includegraphics[trim=21 15 41 15,clip,width=0.5\textwidth]{./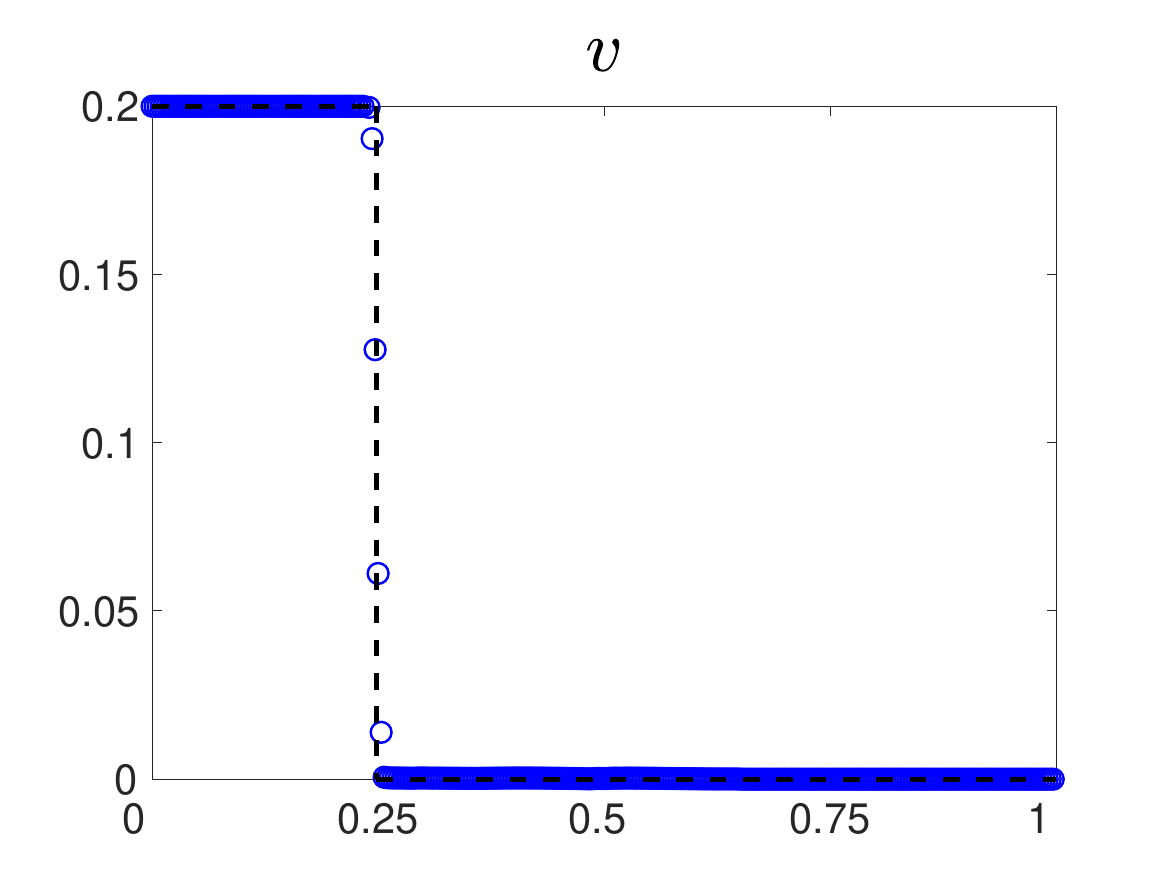}
					\includegraphics[trim=21 15 41 15,clip,width=0.5\textwidth]{./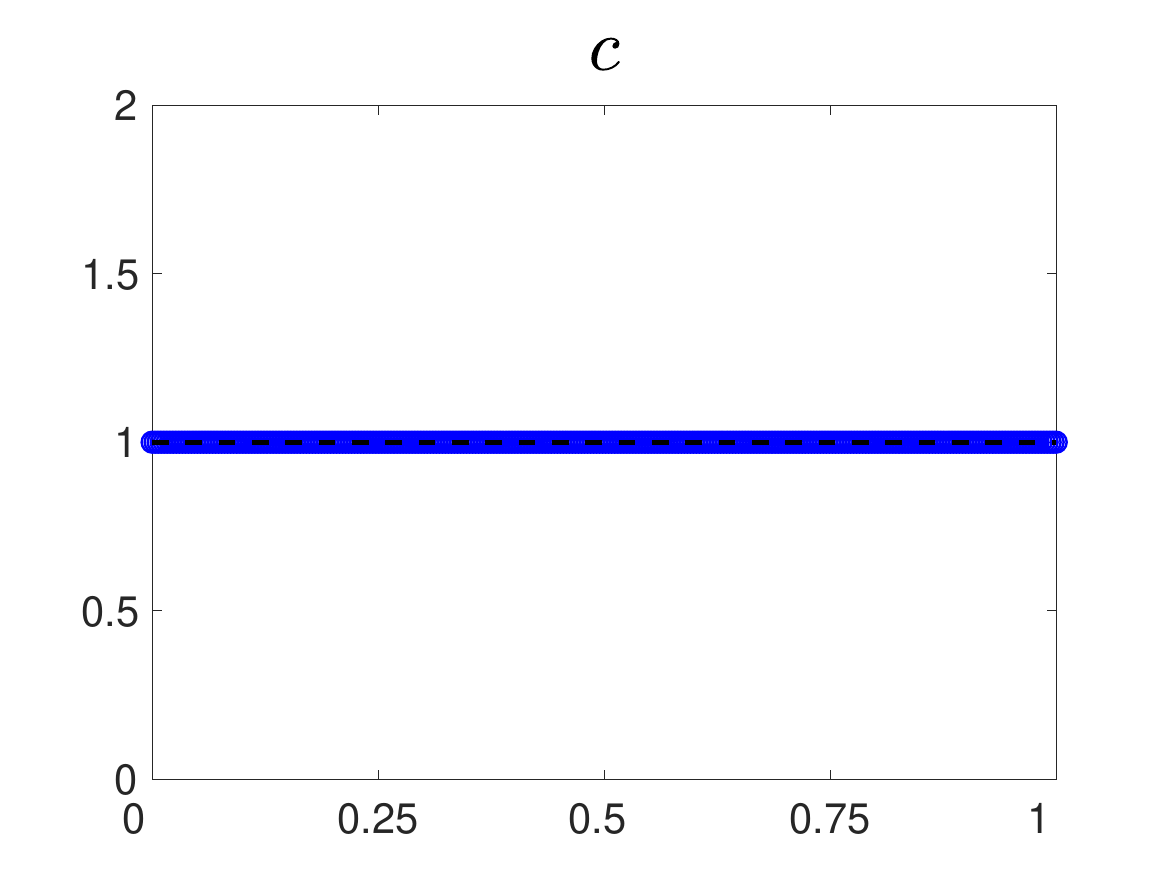}
				}
				\caption{\sf Test T1a, $\gamma = 0$.}
			\end{subfigure}
			\begin{subfigure}[t][][t]{0.46\textwidth}
				\centerline{
					\includegraphics[trim=21 15 41 15,clip,width=0.5\textwidth]{./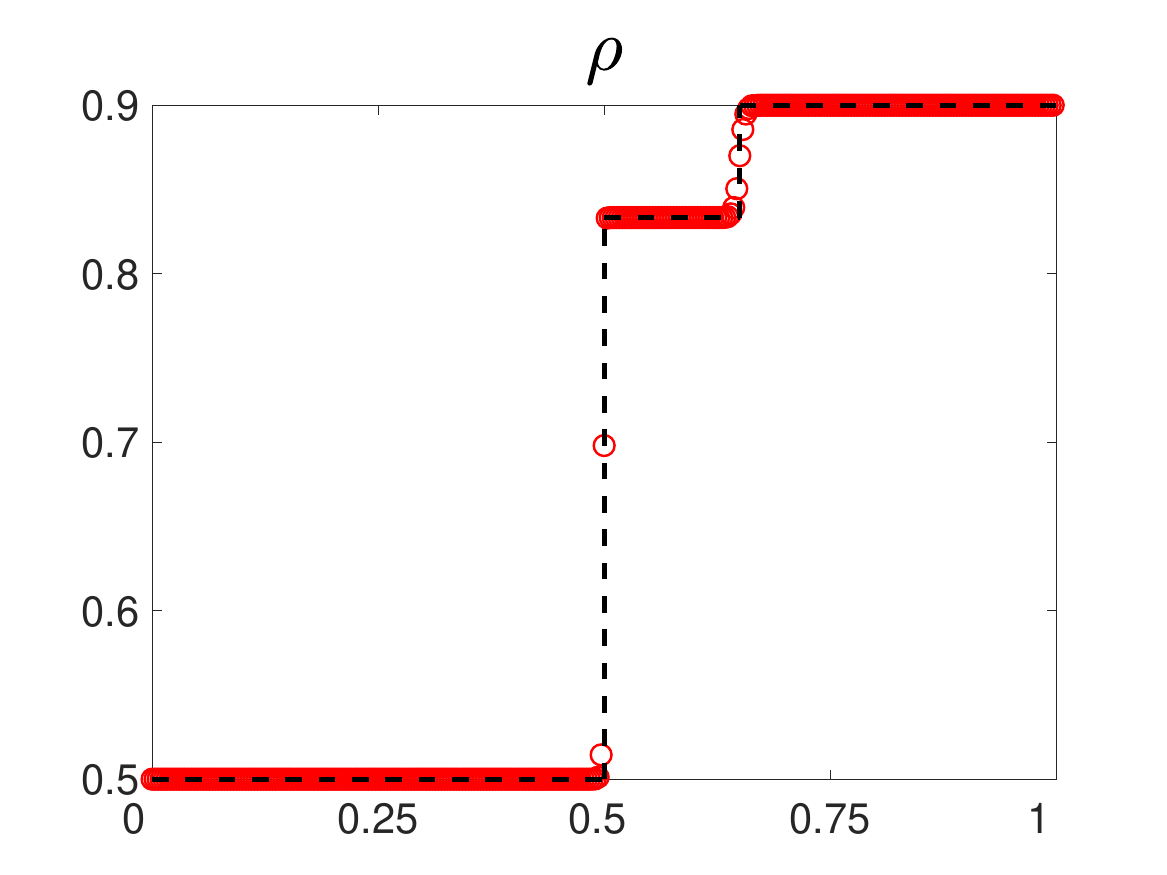}
					\includegraphics[trim=21 15 41 15,clip,width=0.5\textwidth]{./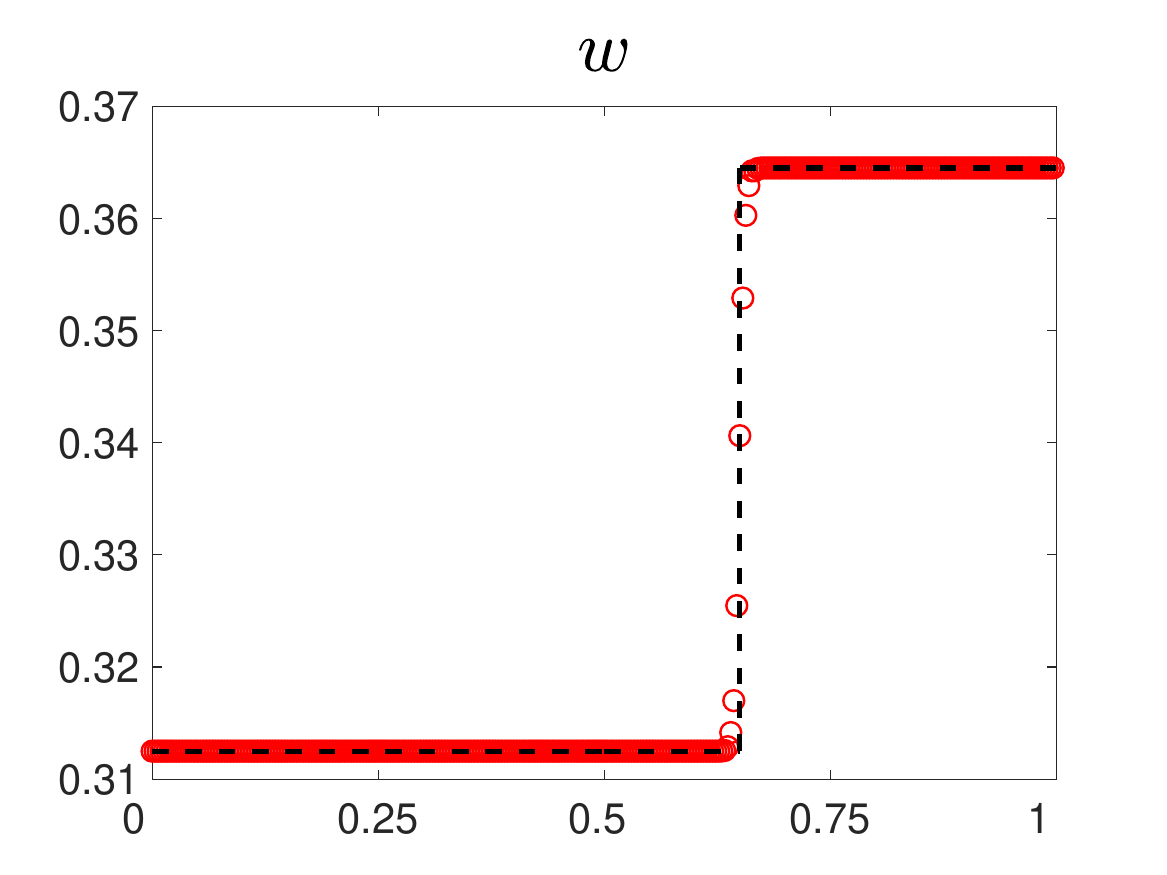}
				}
				\centerline{
					\includegraphics[trim=21 15 41 15,clip,width=0.5\textwidth]{./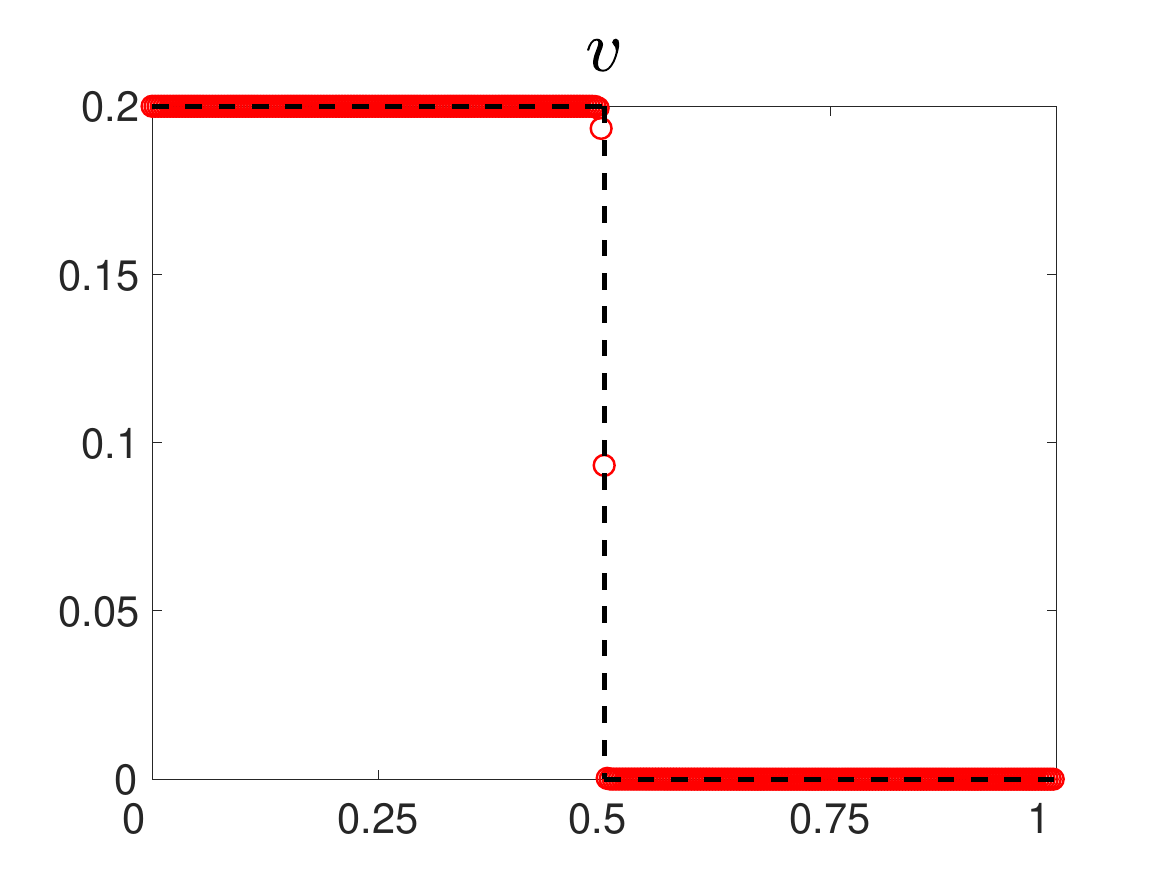}
					\includegraphics[trim=21 15 41 15,clip,width=0.5\textwidth]{./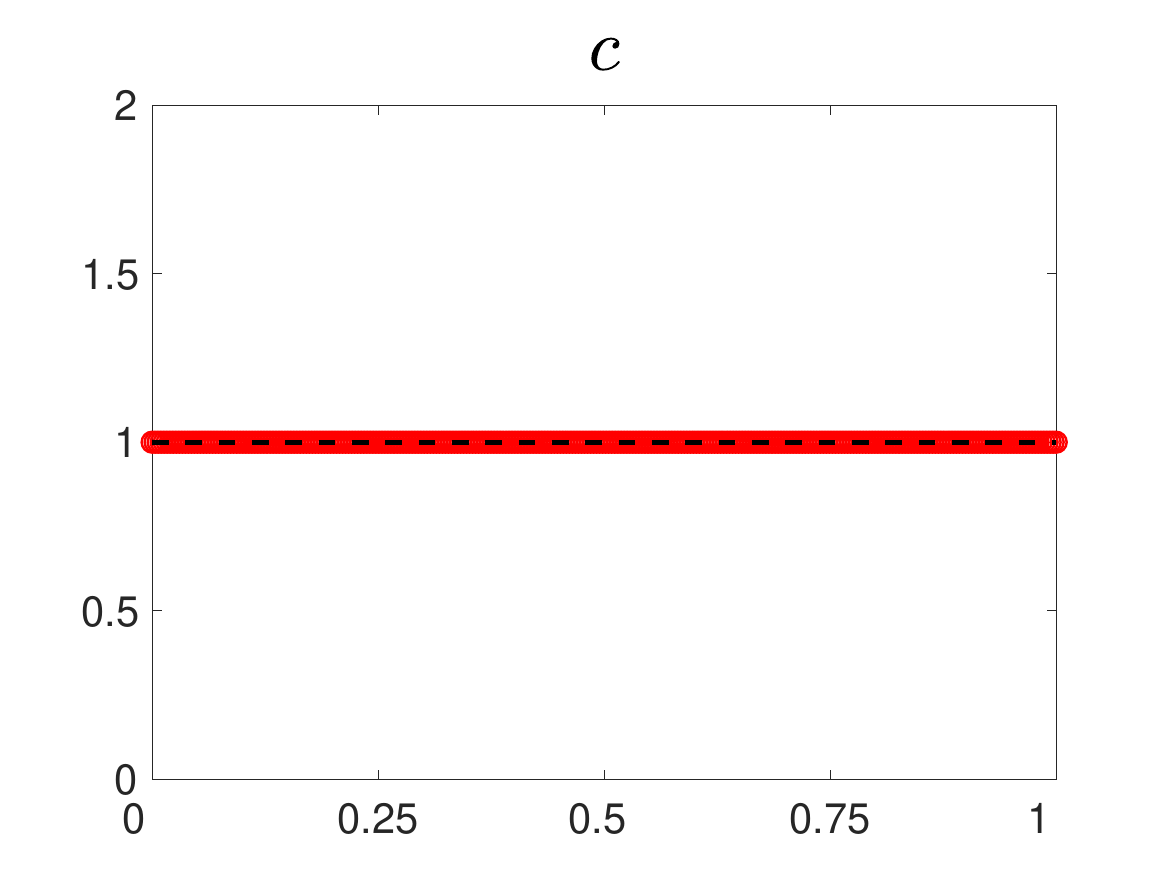}
				}
				\caption{\sf Test T1a, $\gamma = 2$.}
			\end{subfigure}
		}
		\centerline{
			\begin{subfigure}[t][][t]{0.46\textwidth}
				\centerline{
					\includegraphics[trim=25 15 35 15,clip,width=0.5\textwidth]{./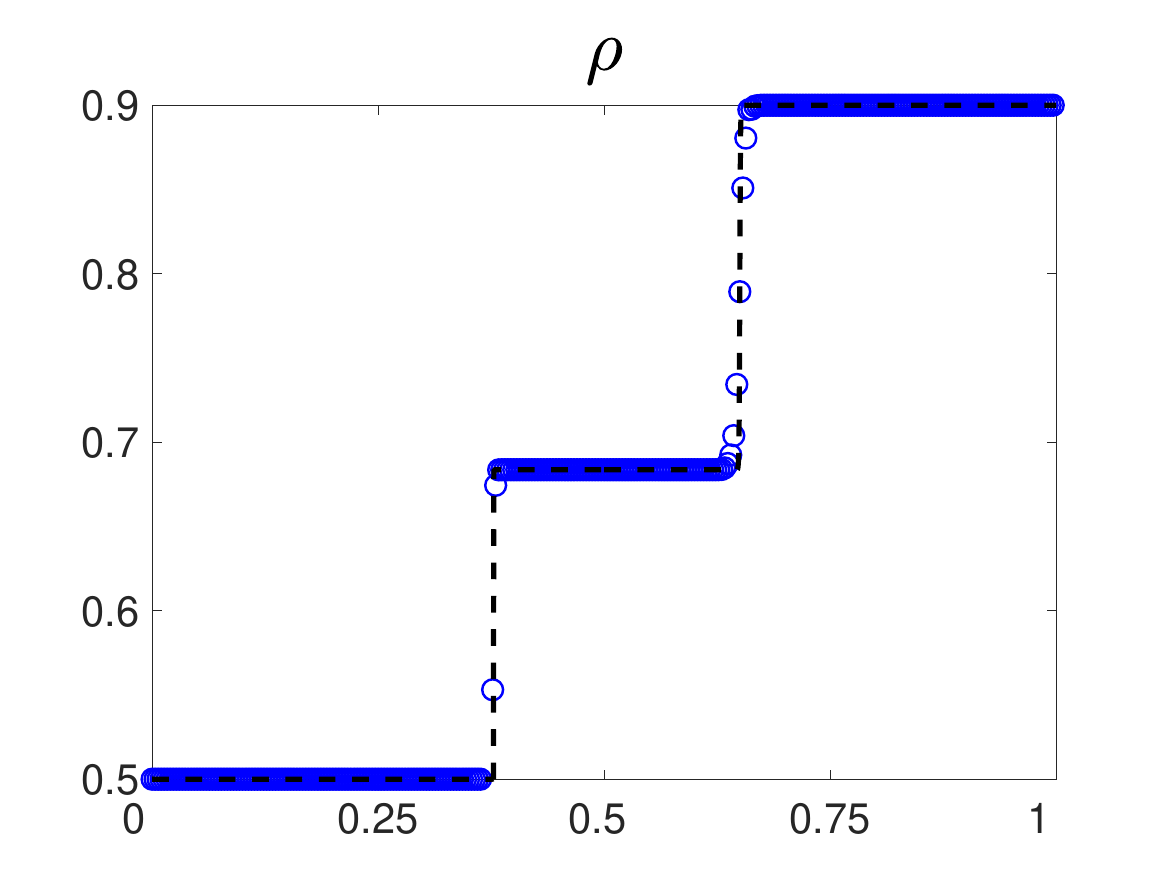}
					\includegraphics[trim=25 15 35 15,clip,width=0.5\textwidth]{./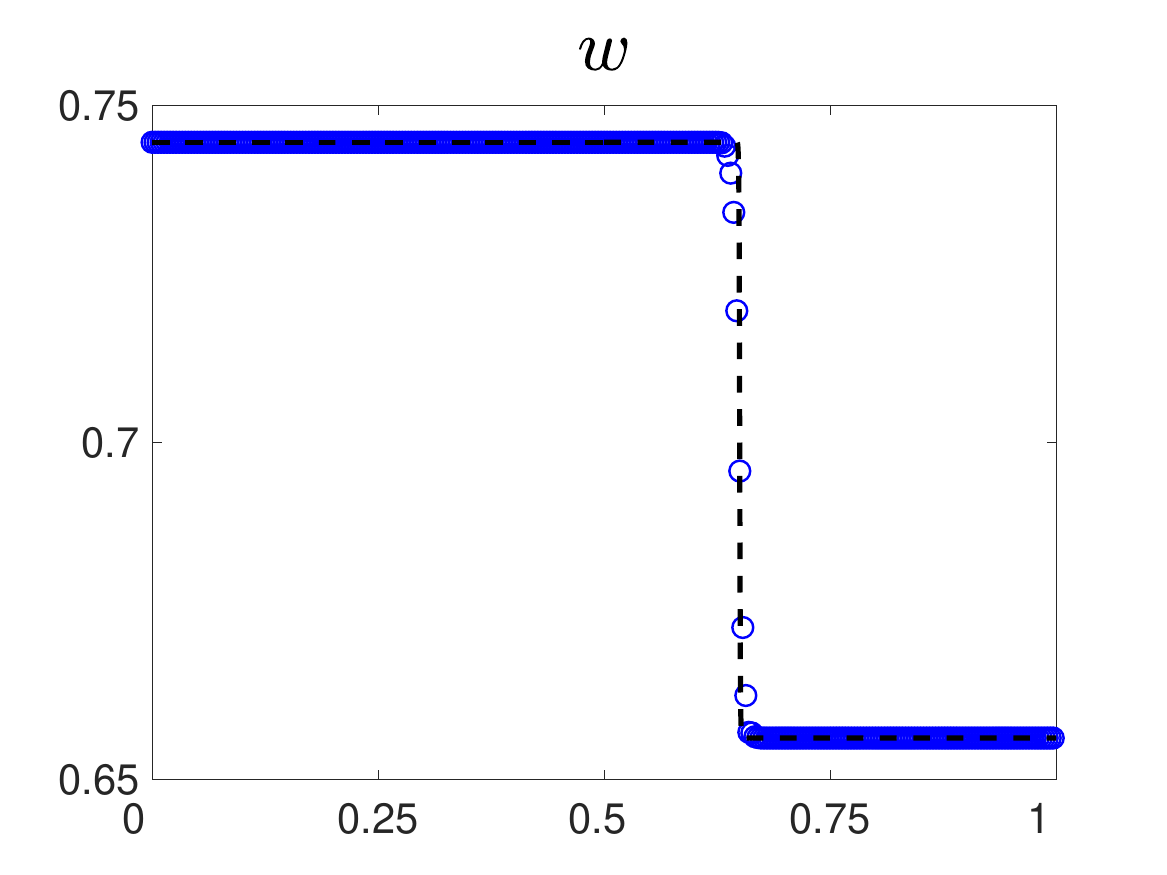}
				}
				\centerline{
					\includegraphics[trim=25 15 35 15,clip,width=0.5\textwidth]{./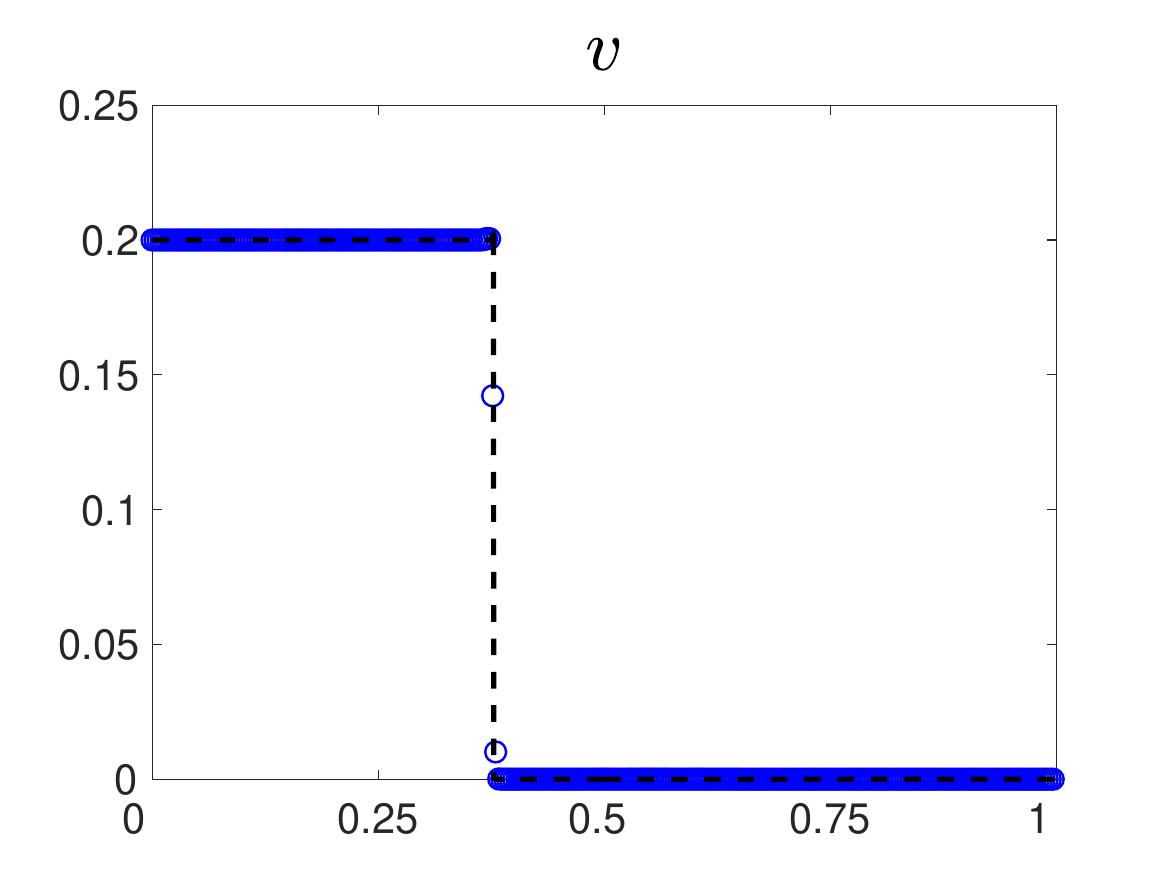}
					\includegraphics[trim=25 15 35 15,clip,width=0.5\textwidth]{./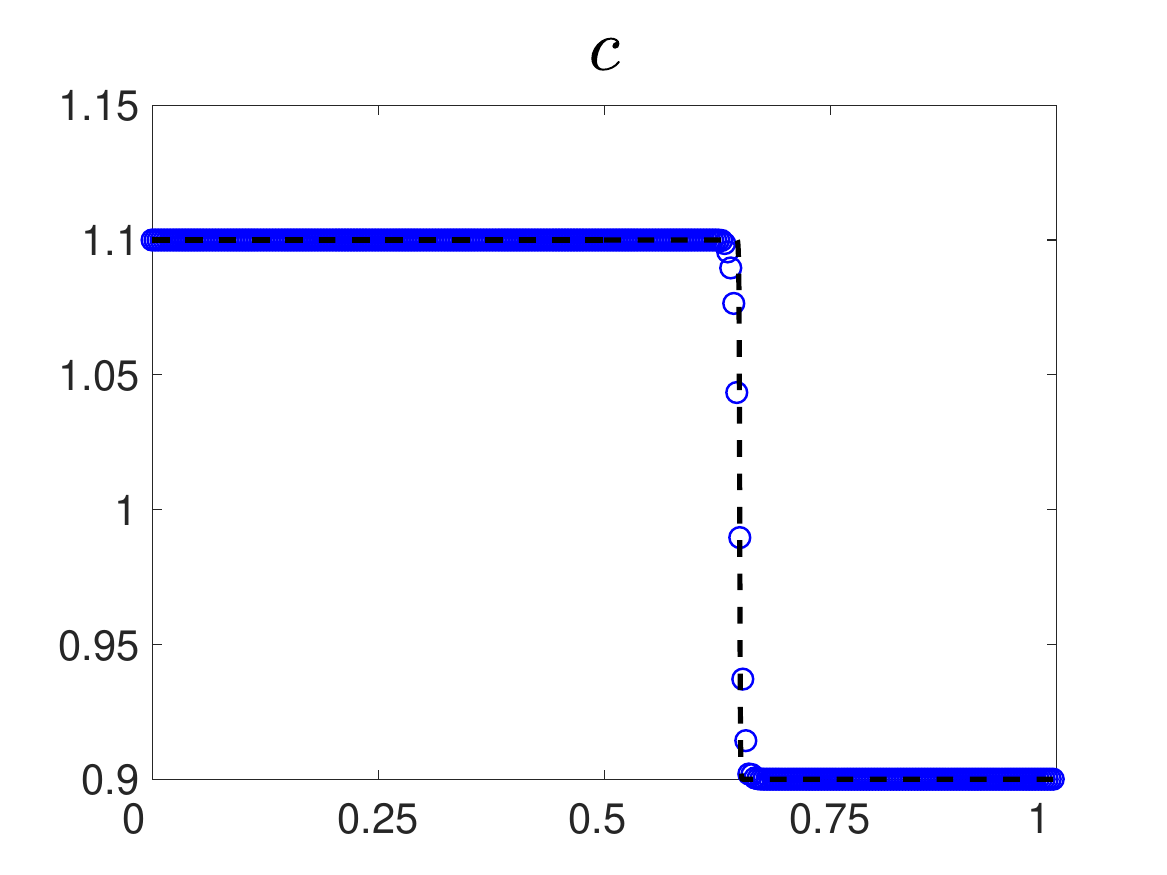}
				}
				\caption{\sf Test T1b, $\gamma = 1$.}
			\end{subfigure}
			\begin{subfigure}[t][][t]{0.46\textwidth}
				\centerline{
					\includegraphics[trim=21 15 41 15,clip,width=0.5\textwidth]{./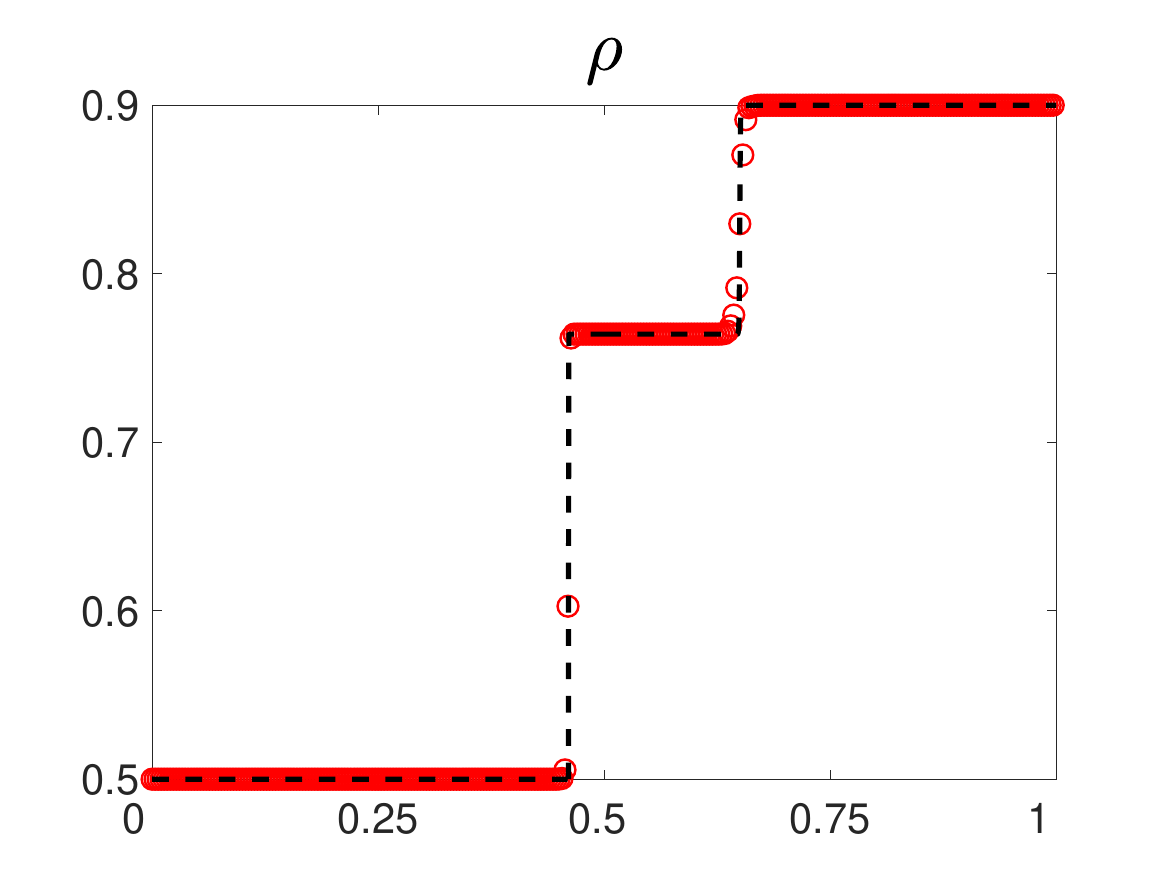}
					\includegraphics[trim=21 15 41 15,clip,width=0.5\textwidth]{./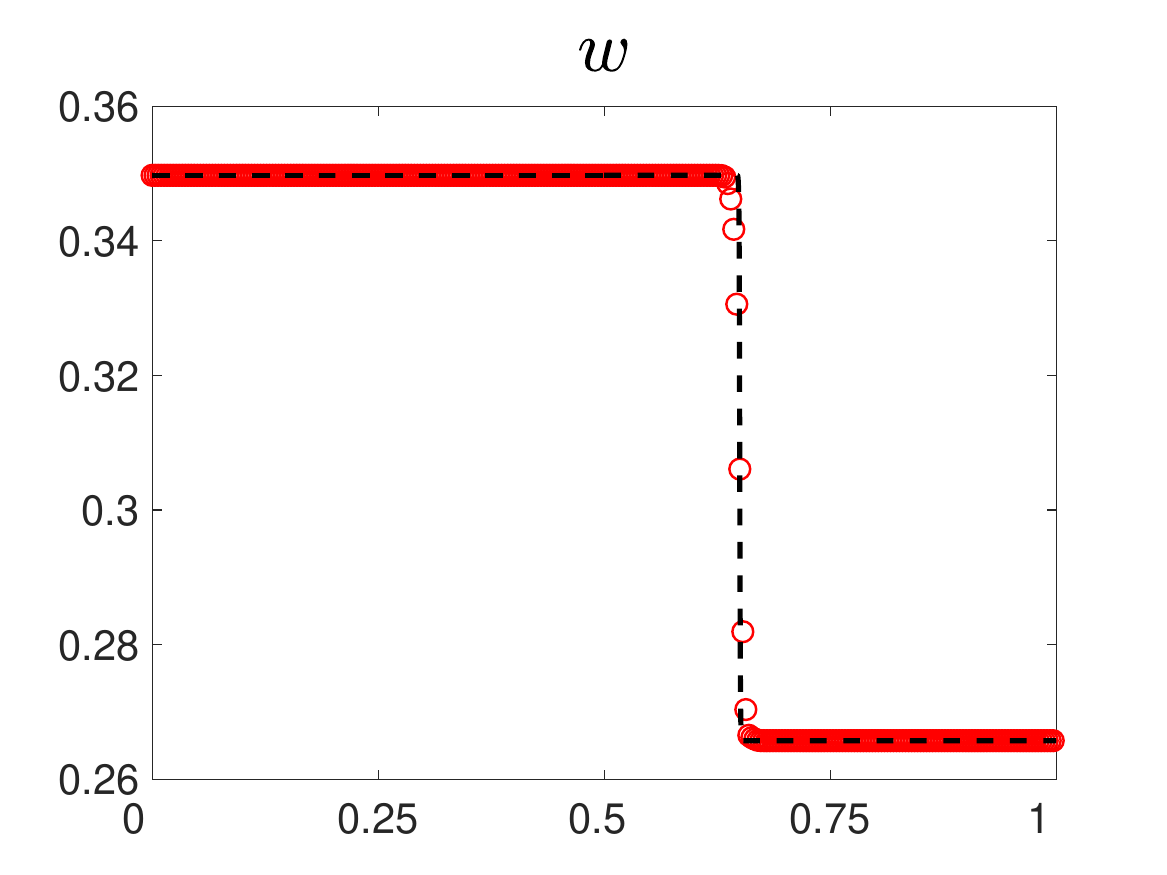}
				}
				\centerline{
					\includegraphics[trim=21 15 41 15,clip,width=0.5\textwidth]{./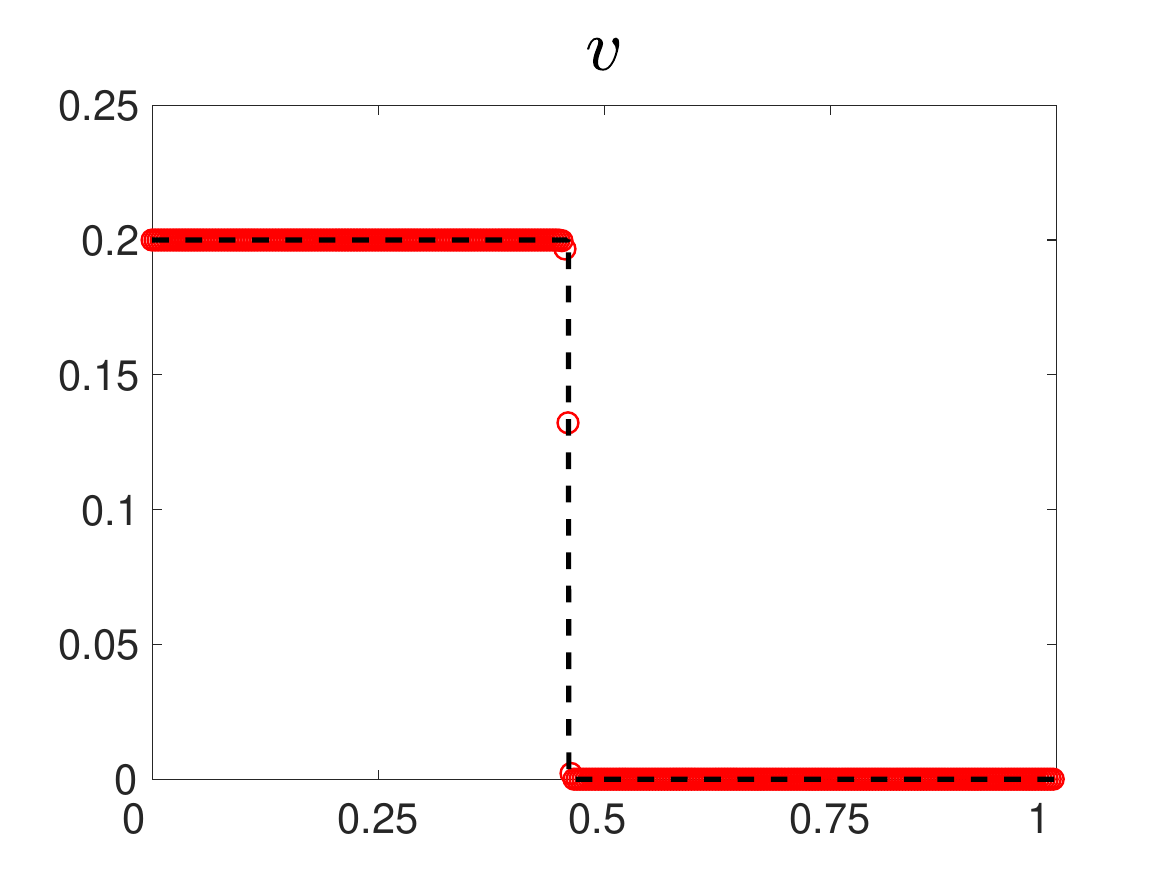}
					\includegraphics[trim=21 15 41 15,clip,width=0.5\textwidth]{./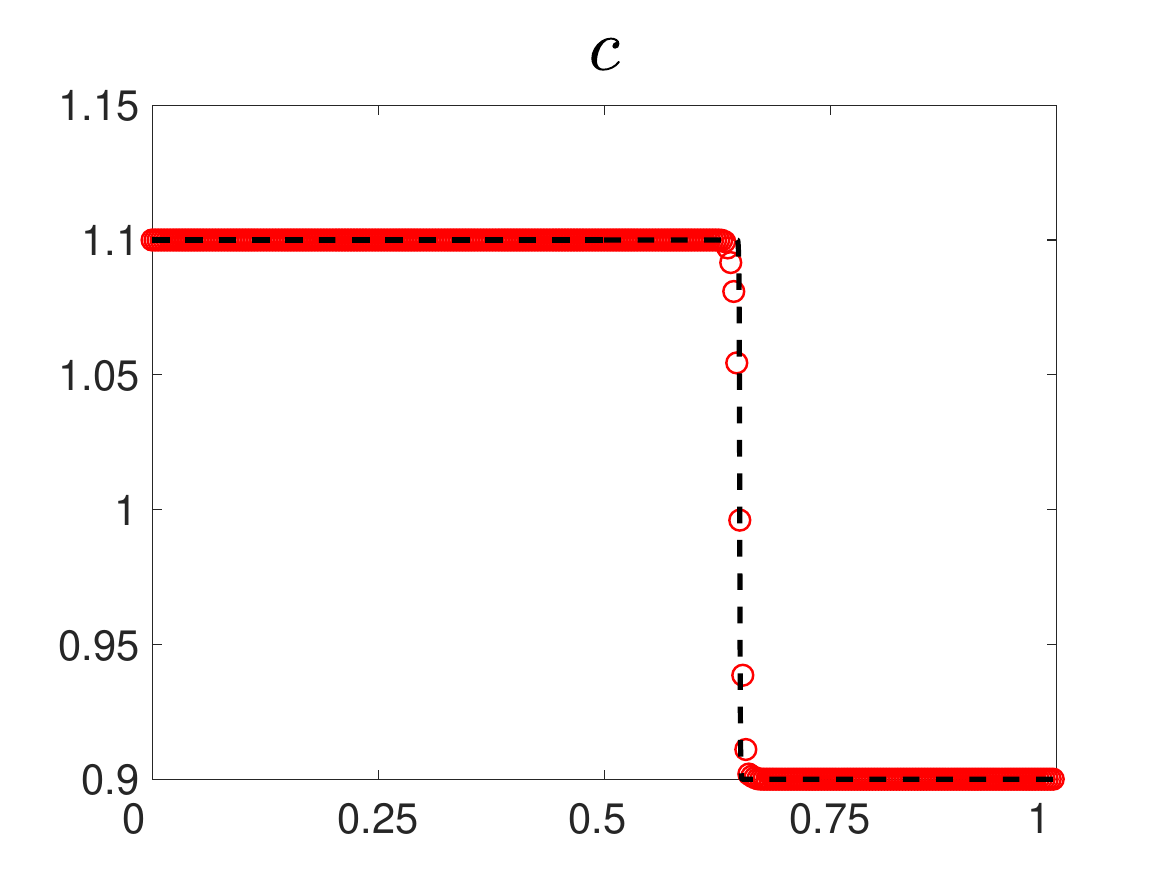}
				}
				\caption{\sf Test T1b, $\gamma = 2$.}
			\end{subfigure}
		}
		\caption{\sf Example \ref{exam2b}, BP-OEDG scheme, $t=0.1$, $\Delta x = 1/300$. Reference solutions are depicted in black dashed lines.}
		\label{fig:exam2b_1}
	\end{figure}
	\begin{figure}
		\centerline{
			\begin{subfigure}[t][][t]{0.23\textwidth}
				\includegraphics[trim=25 15 41 15,clip,width=\textwidth]{./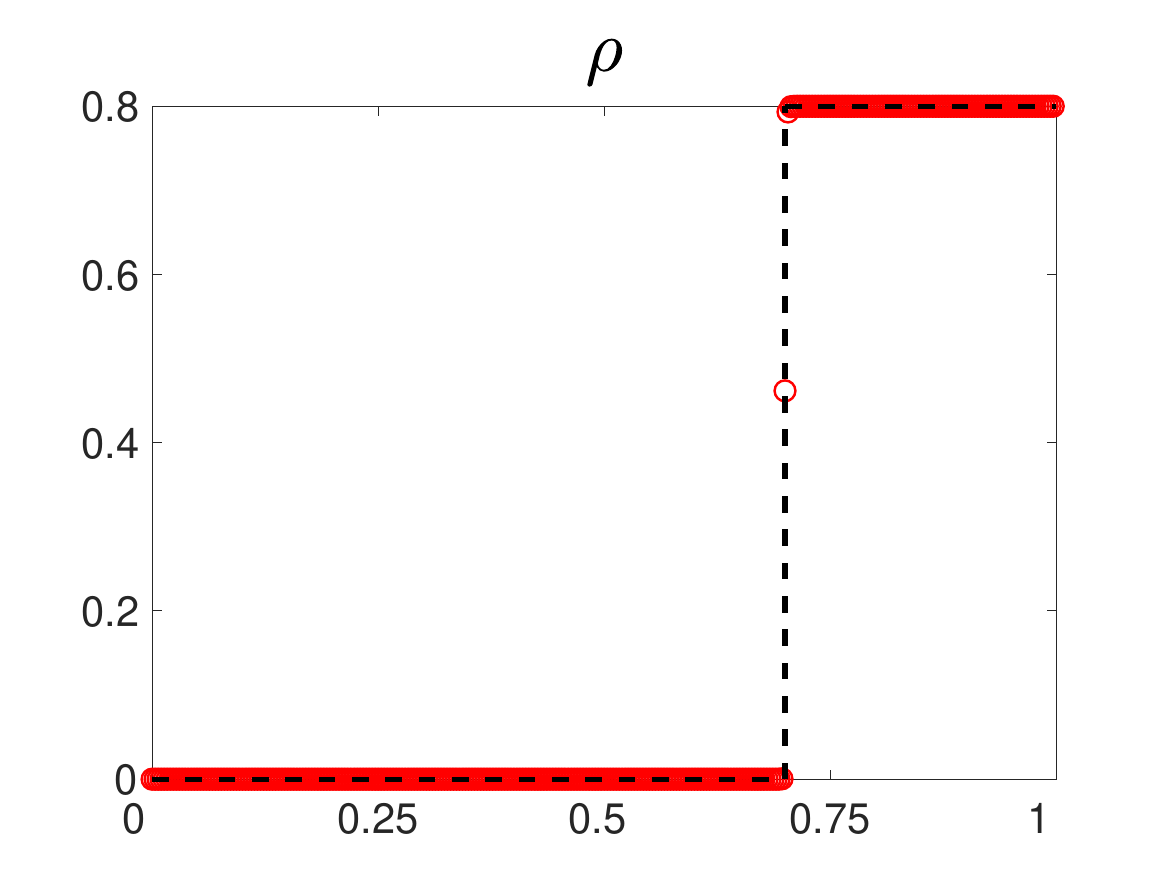}
				\includegraphics[trim=25 15 41 15,clip,width=\textwidth]{./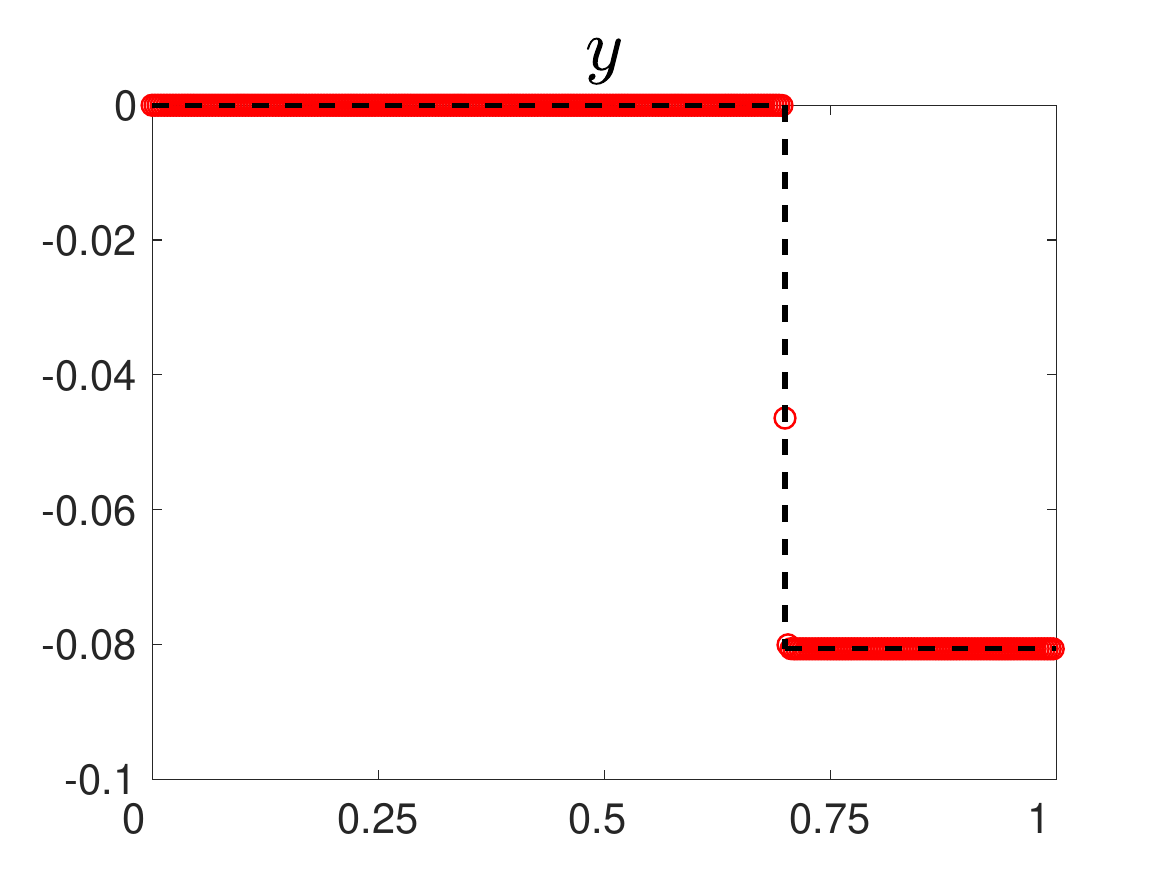}
				\caption{\sf Test T2a, $\gamma = 0$.}
			\end{subfigure}
			\begin{subfigure}[t][][t]{0.23\textwidth}
				\includegraphics[trim=21 15 41 15,clip,width=\textwidth]{./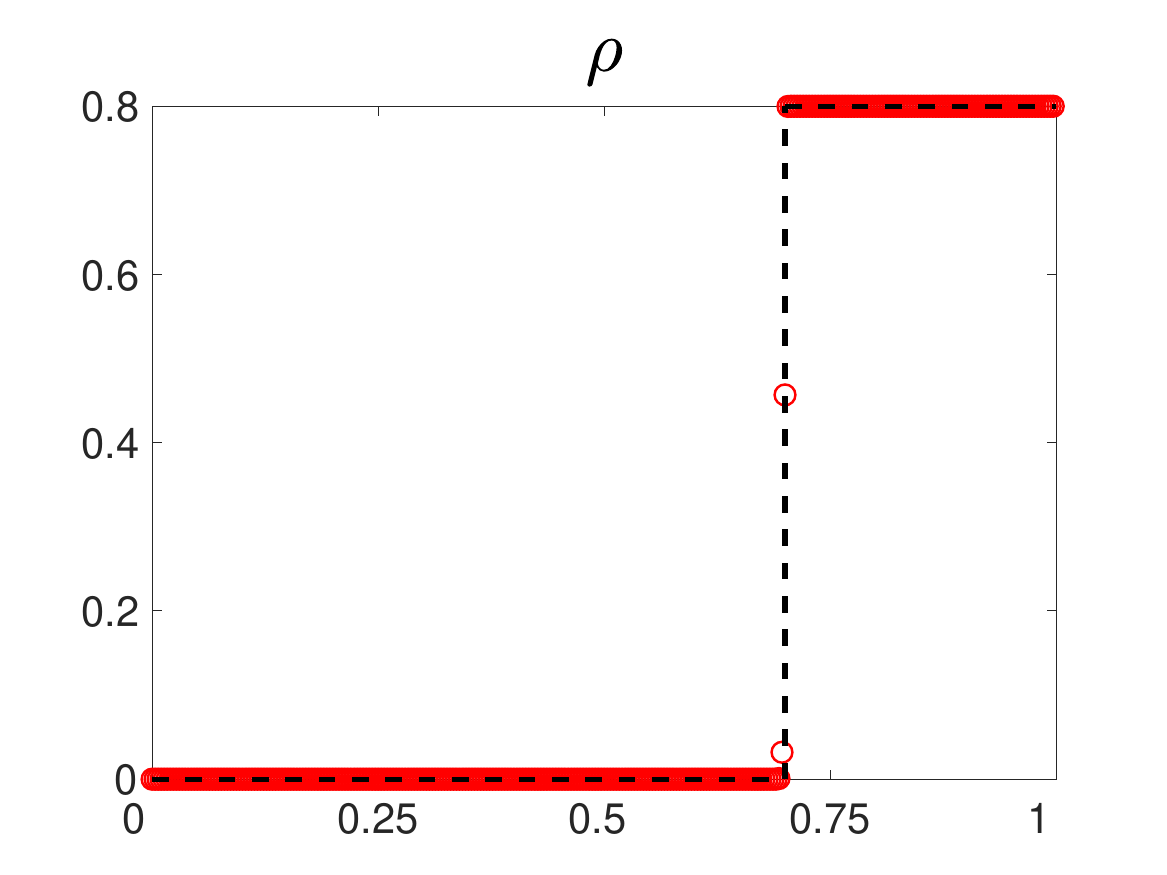} \\
				\includegraphics[trim=21 15 41 15,clip,width=\textwidth]{./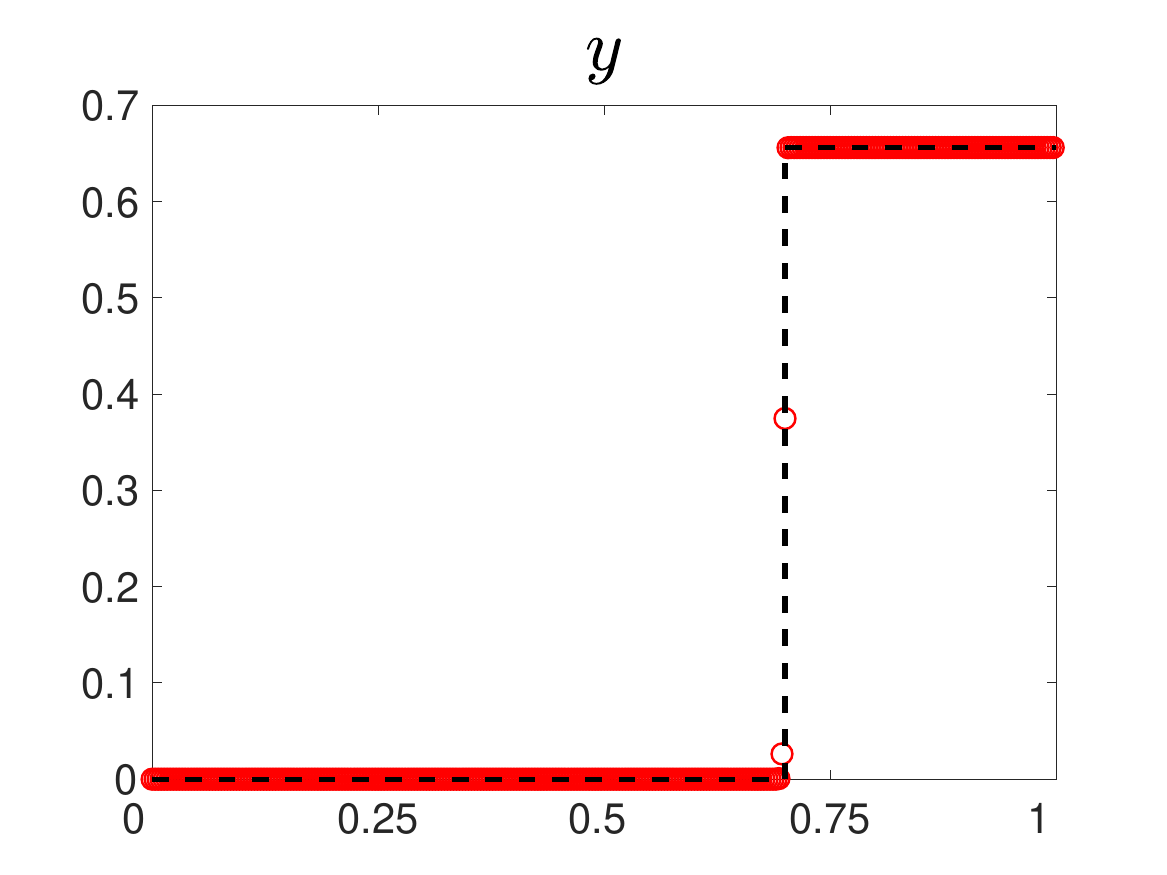}
				\caption{\sf Test T2a, $\gamma = 1$.}
			\end{subfigure}
			\begin{subfigure}[t][][t]{0.23\textwidth}
				\includegraphics[trim=25 15 41 15,clip,width=\textwidth]{./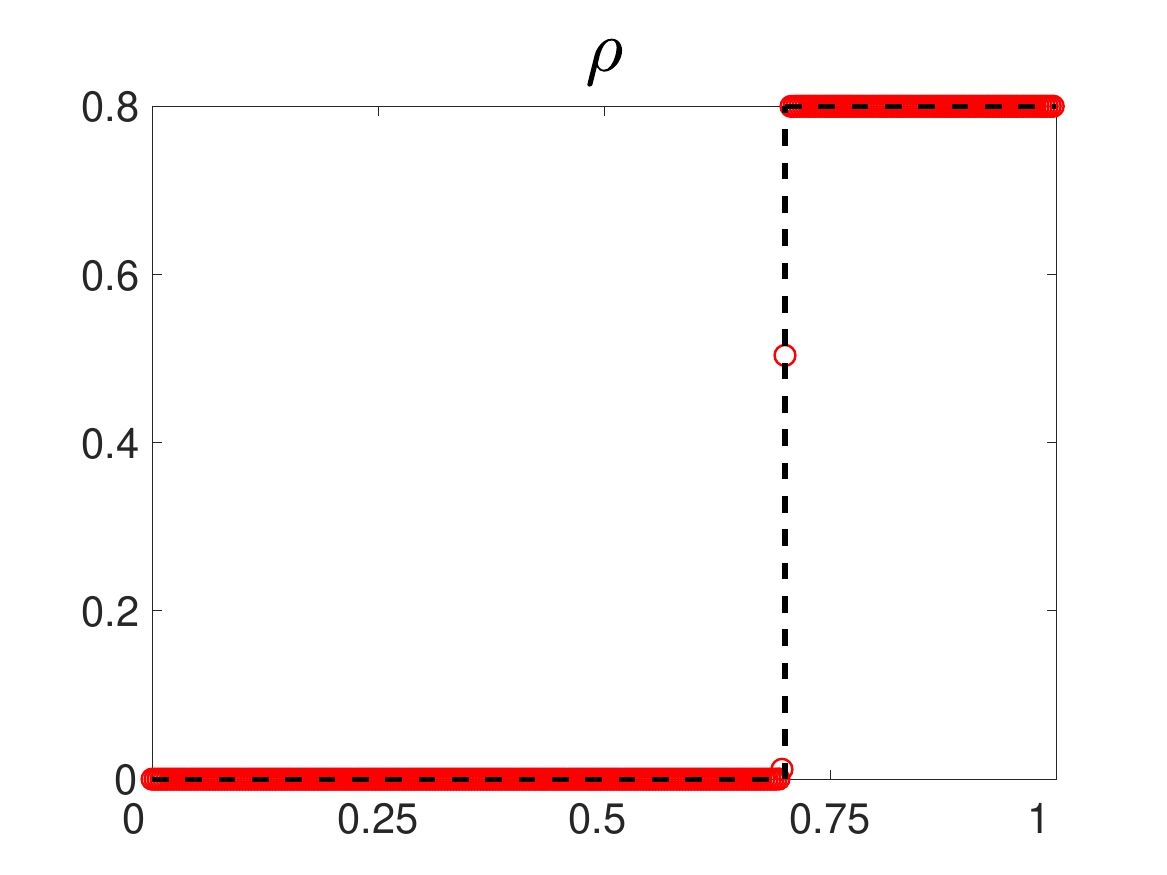}
				\includegraphics[trim=25 15 41 15,clip,width=\textwidth]{./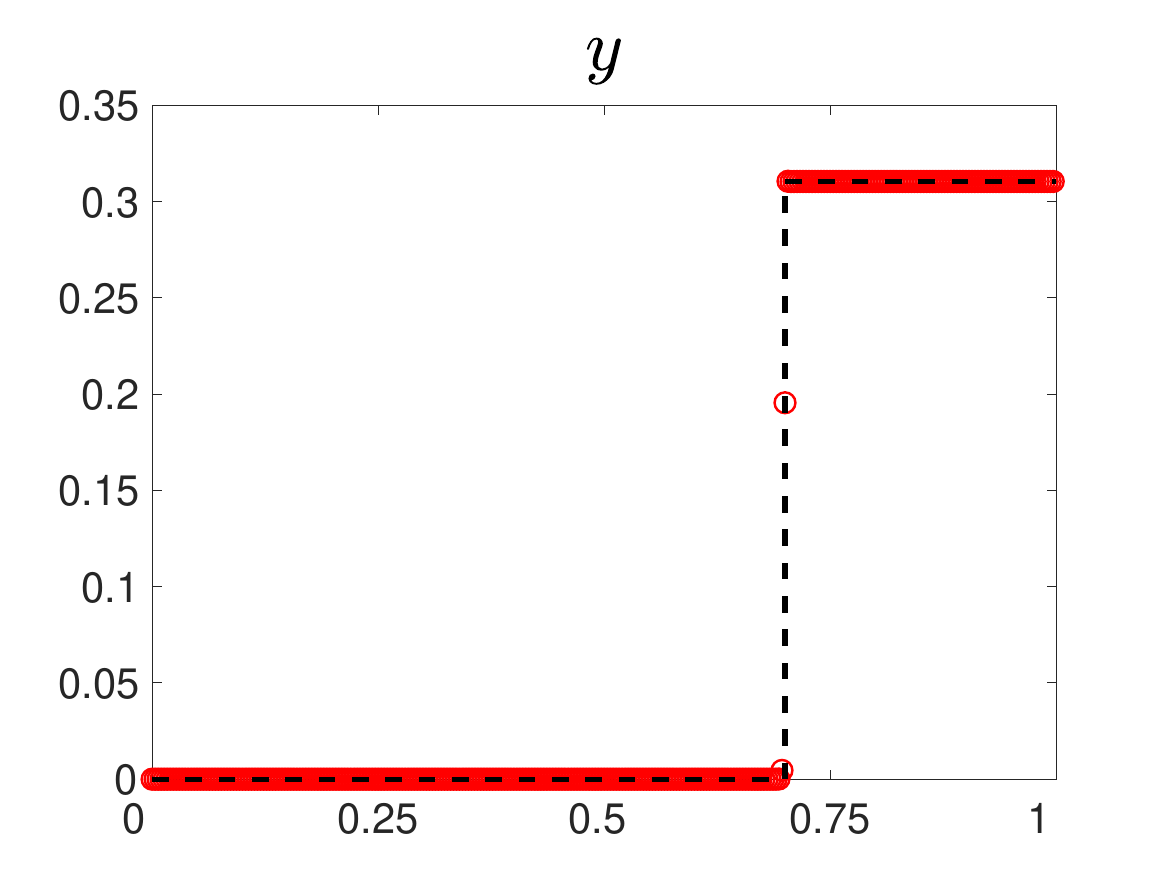}
				\caption{\sf Test T2a, $\gamma = 2$.}
			\end{subfigure}
			\begin{subfigure}[t][][t]{0.23\textwidth}
				\includegraphics[trim=21 15 41 15,clip,width=\textwidth]{./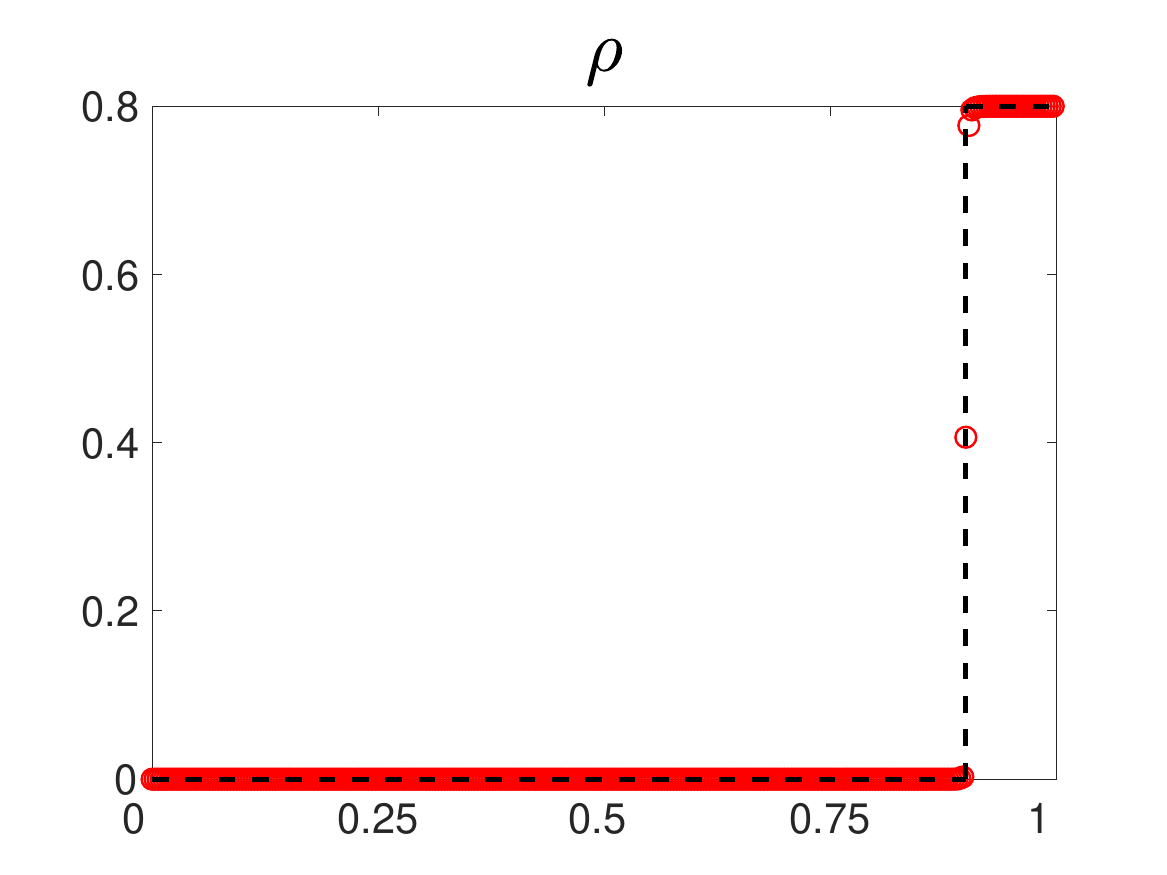} \\
				\includegraphics[trim=21 15 41 15,clip,width=\textwidth]{./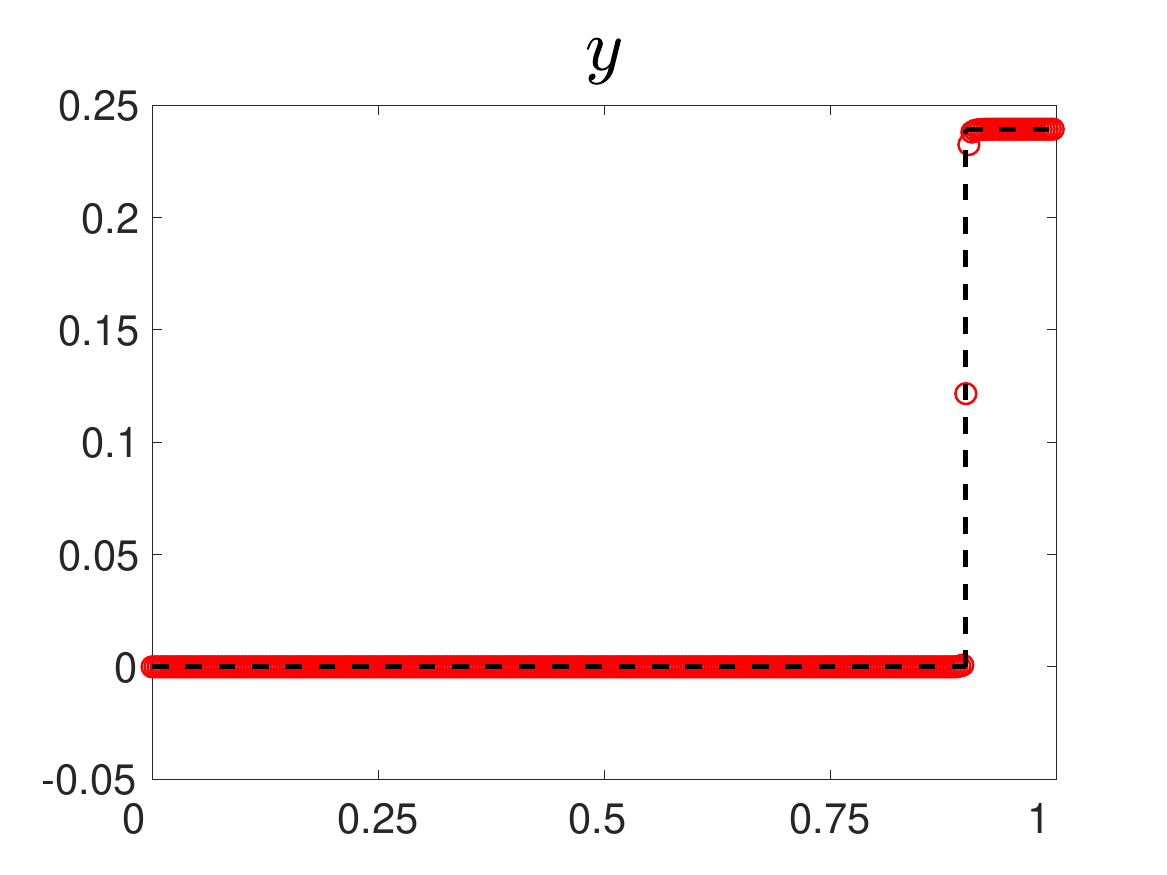}
				\caption{\sf Test T2b, $\gamma = 0$.}
			\end{subfigure}	
		}
		\centerline{
			\begin{subfigure}[t][][t]{0.23\textwidth}
				\includegraphics[trim=25 15 41 15,clip,width=\textwidth]{./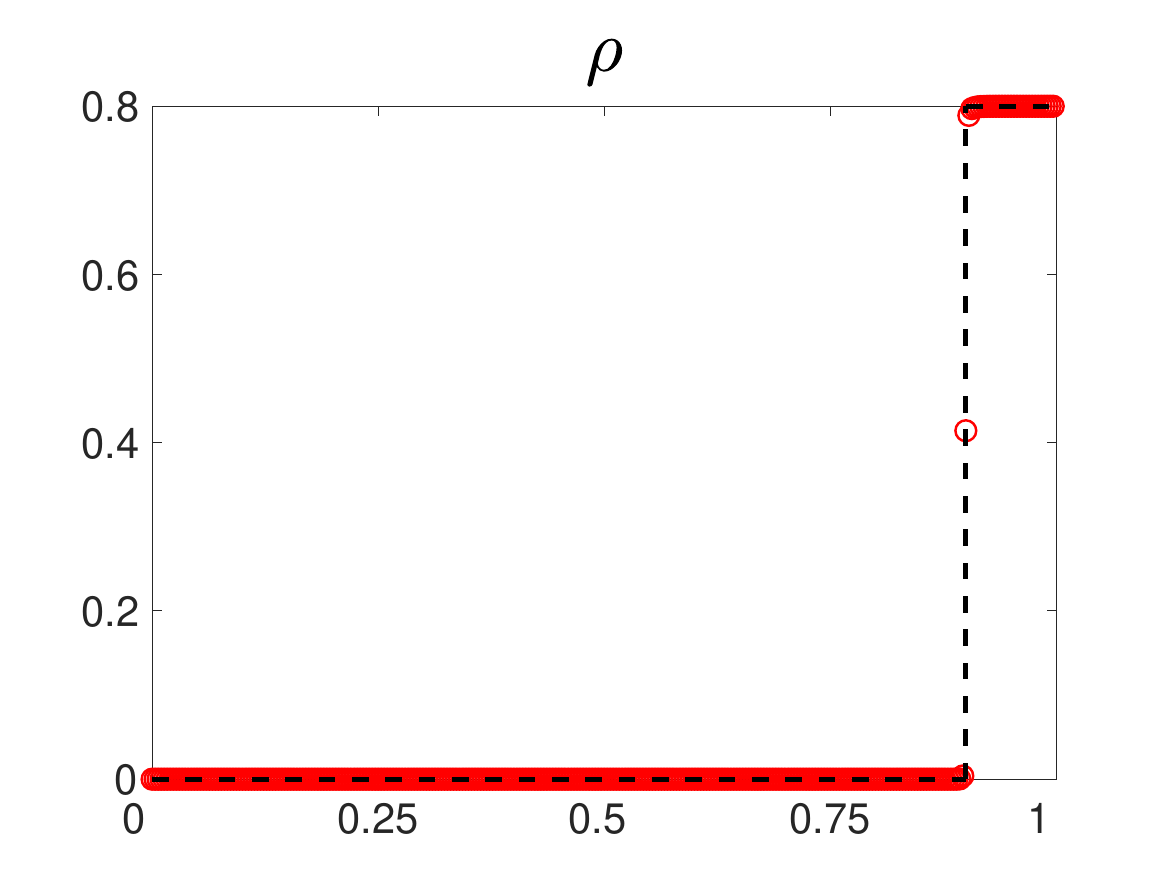}
				\includegraphics[trim=25 15 41 15,clip,width=\textwidth]{./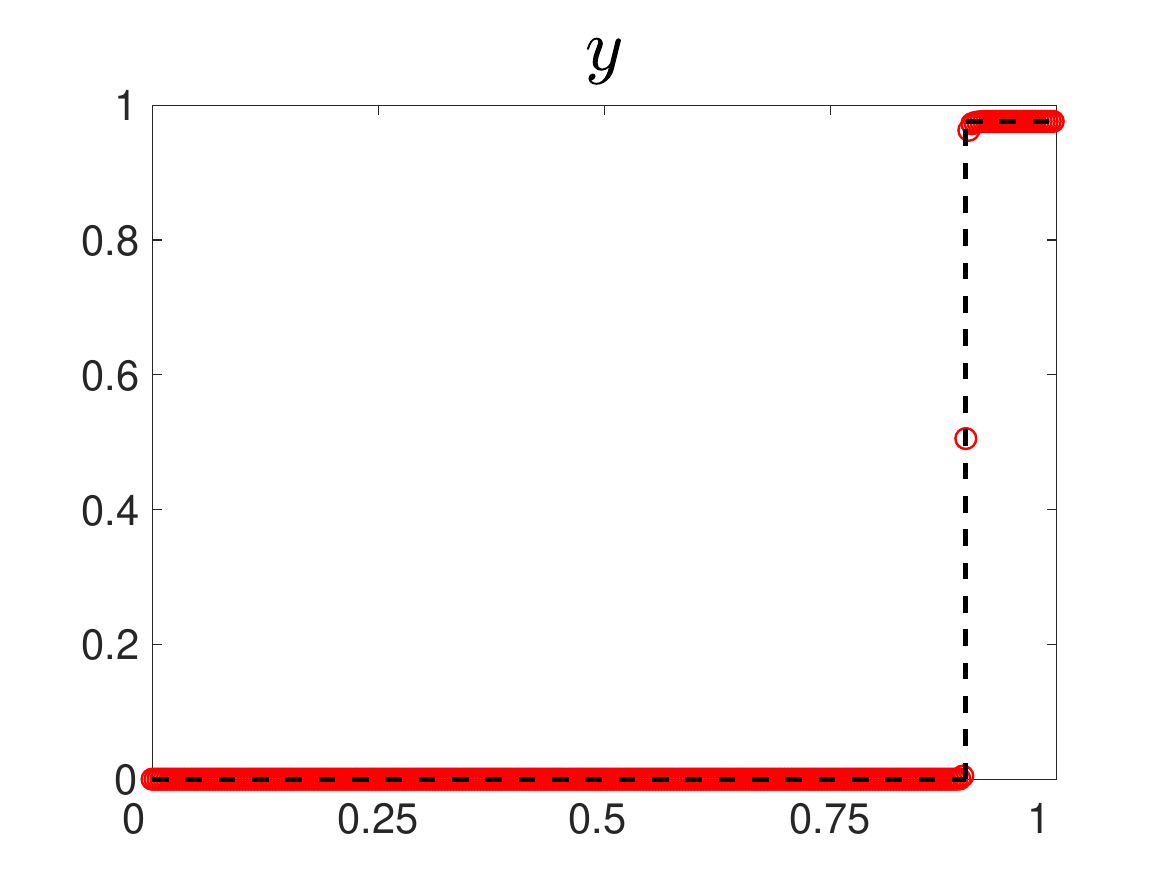}
				\caption{\sf Test T2b, $\gamma = 1$.}
			\end{subfigure}
			\begin{subfigure}[t][][t]{0.23\textwidth}
				\includegraphics[trim=21 15 41 15,clip,width=\textwidth]{./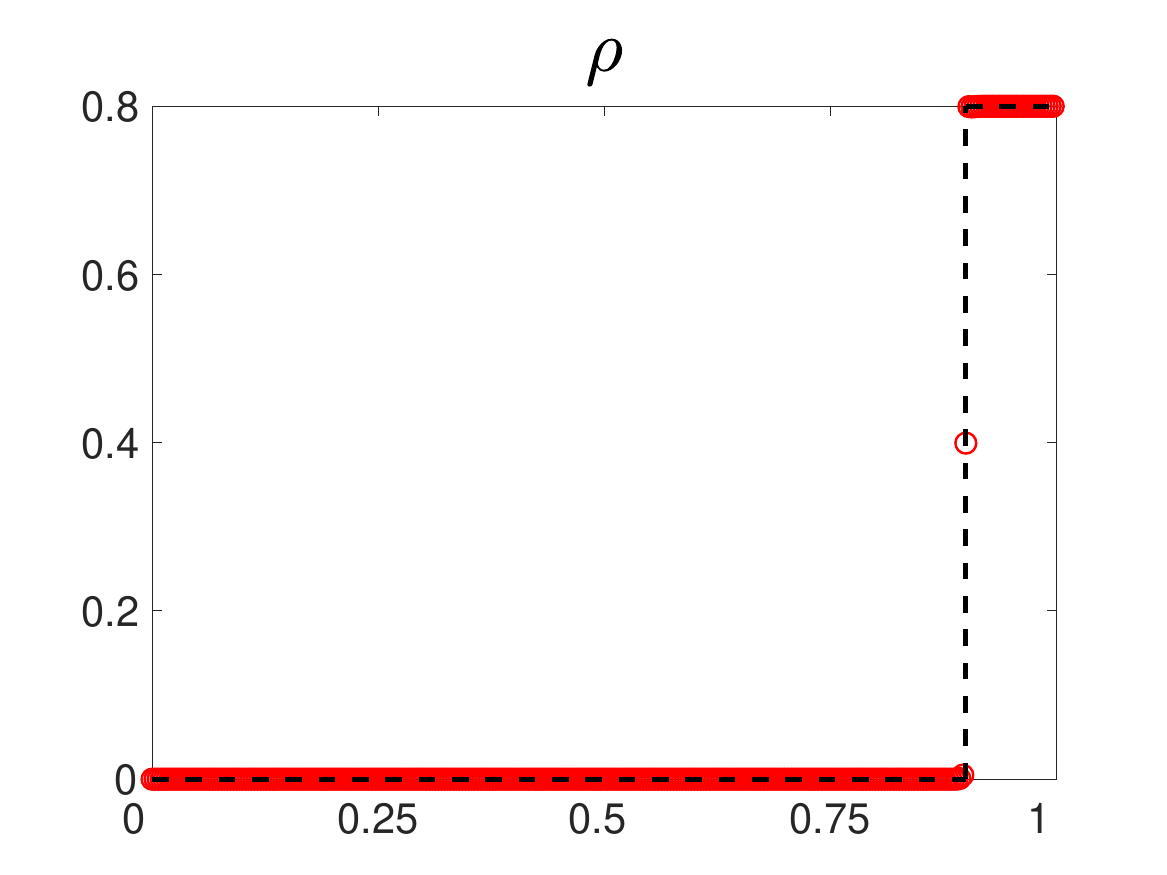} \\
				\includegraphics[trim=21 15 41 15,clip,width=\textwidth]{./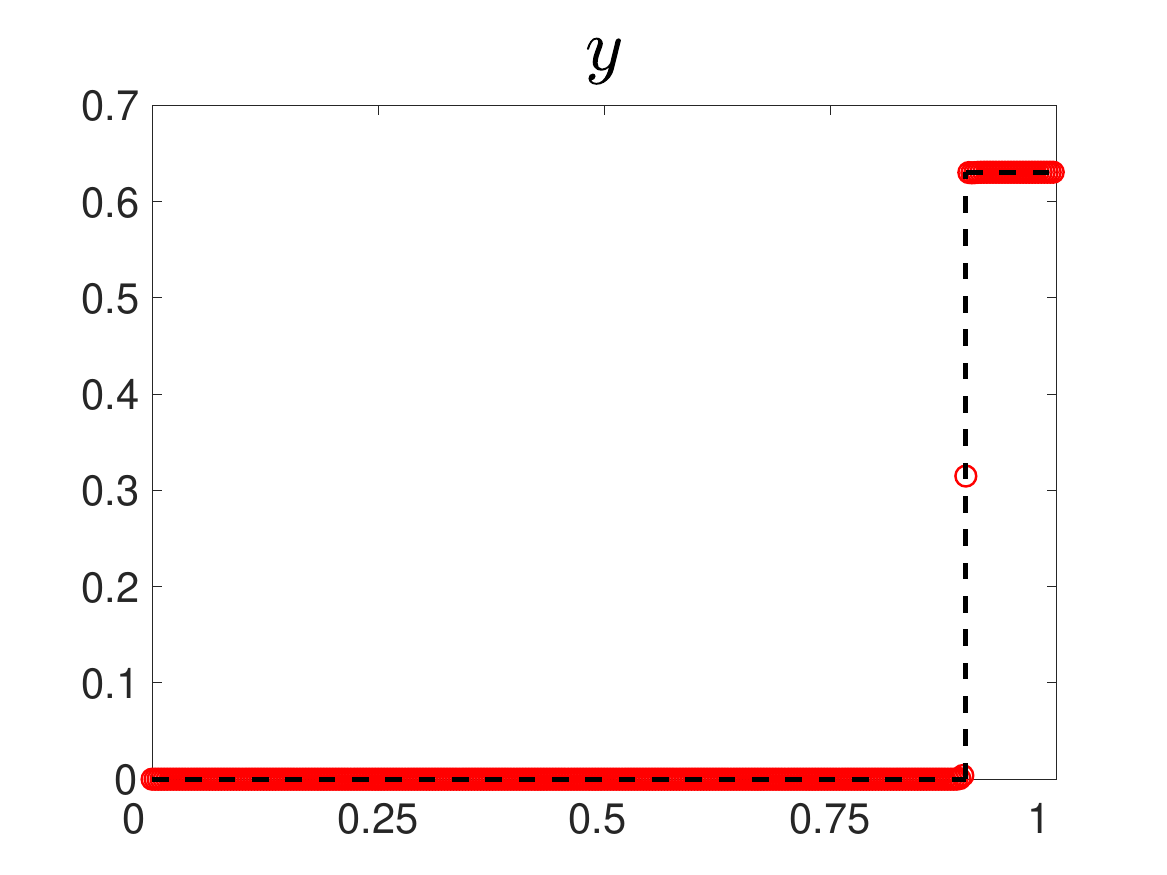}
				\caption{\sf Test T2b, $\gamma = 2$.}
			\end{subfigure}
			\begin{subfigure}[t][][t]{0.23\textwidth}
				\includegraphics[trim=25 15 41 15,clip,width=\textwidth]{./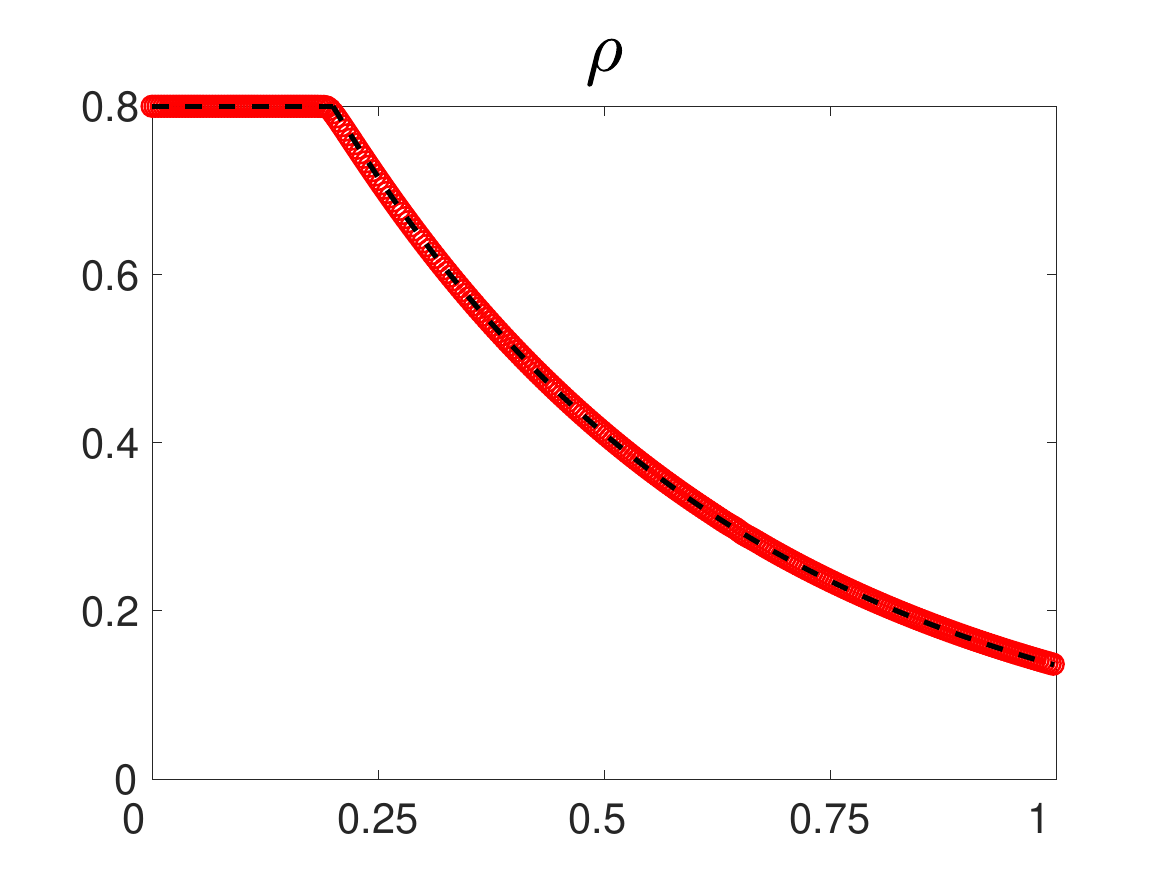}
				\includegraphics[trim=25 15 41 15,clip,width=\textwidth]{./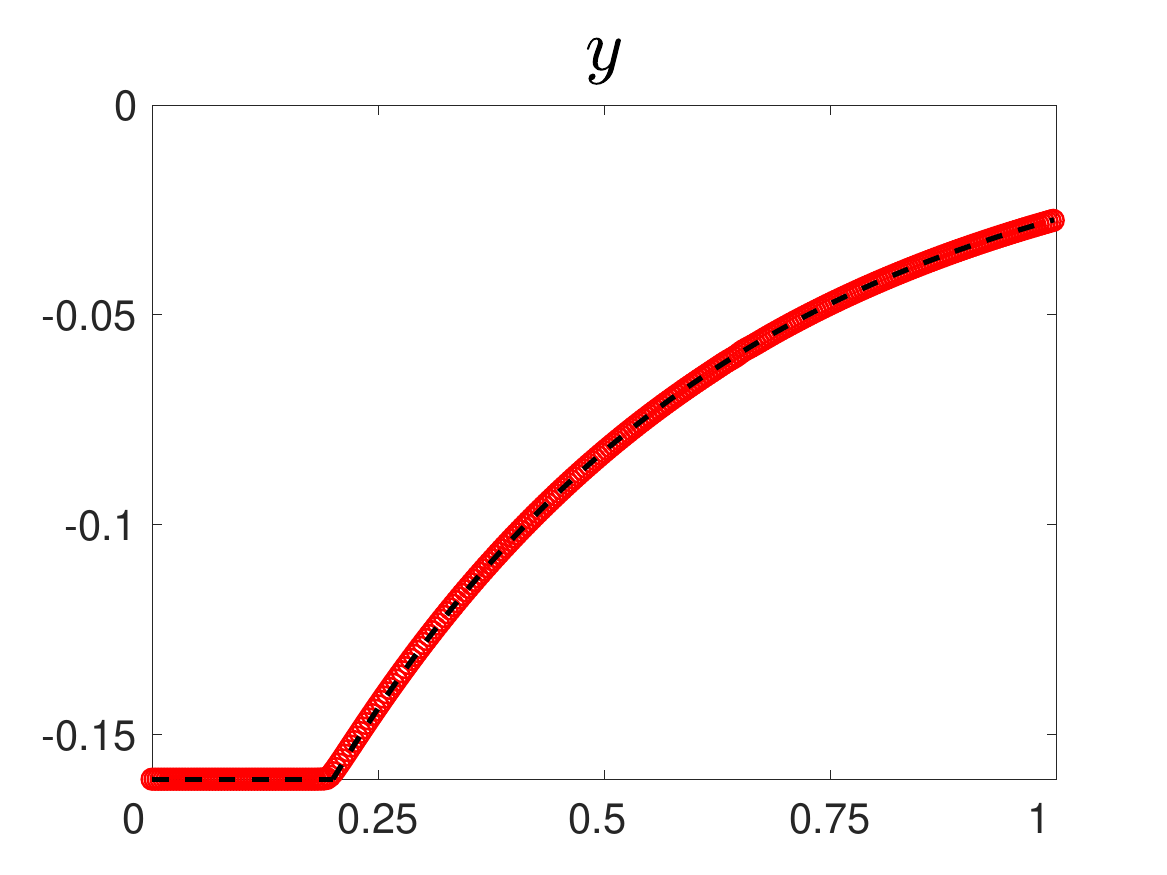}
				\caption{\sf Test T3, $\gamma = 0$.}
			\end{subfigure}
			\begin{subfigure}[t][][t]{0.23\textwidth}
				\includegraphics[trim=21 15 41 15,clip,width=\textwidth]{./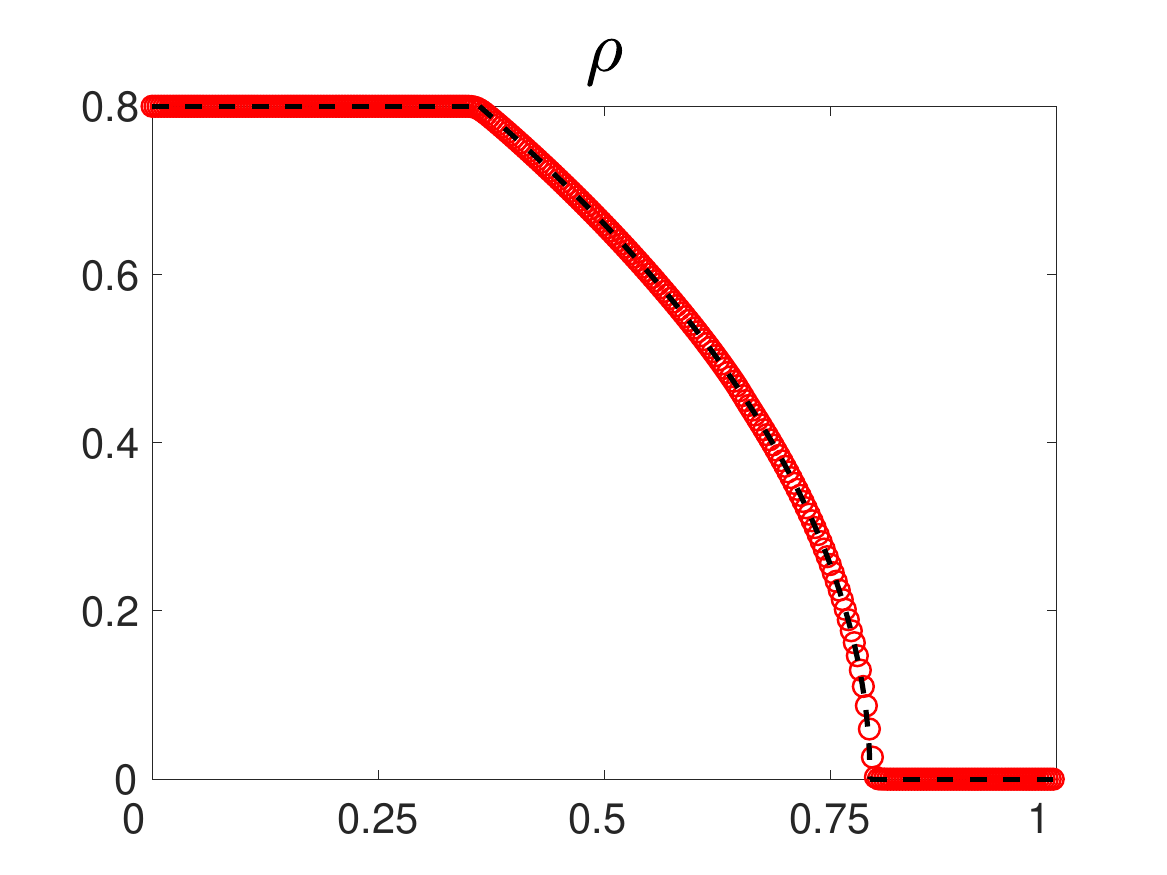} \\
				\includegraphics[trim=21 15 41 15,clip,width=\textwidth]{./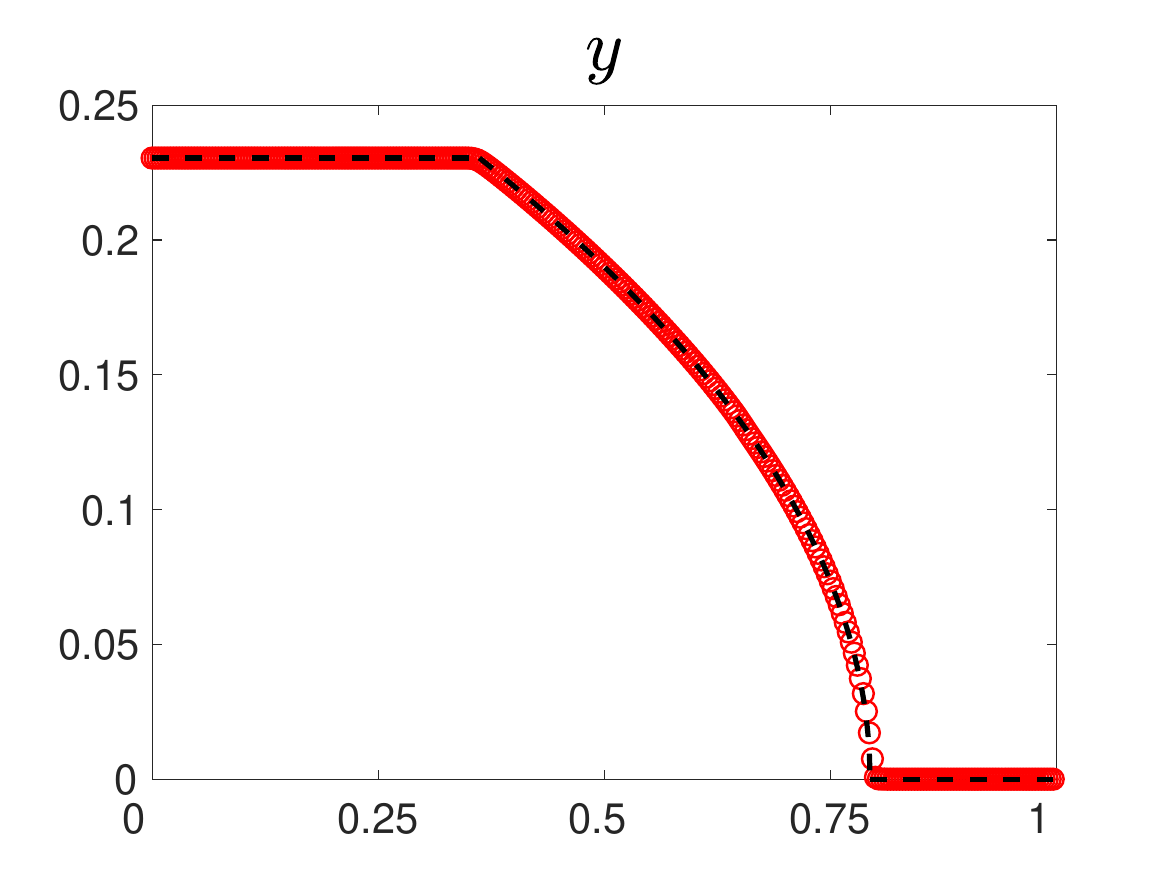}
				\caption{\sf Test T3, $\gamma = 2$.}
			\end{subfigure}
		}
		\caption{\sf Example \ref{exam2b}, BP-OEDG scheme, $t=0.1$, $\Delta x = 1/300$. Reference solutions are depicted in black dashed lines.}
		\label{fig:exam2b_2}
	\end{figure}
	
	For these test problems, the BP property of the employed numerical scheme is crucial for successful simulations. In fact, if the nonBP-OEDG scheme is used instead, the numerical results may breach the invariant domain in just a few time steps. The observed BP violation cases are listed in \Cref{Table3}.

	\begin{table}[h!]
		\renewcommand{\arraystretch}{1.3}
		\centering
		\caption{\sf Example \ref{exam2b}, invariant domain violations detected in the numerical results, nonBP-OEDG scheme, $\Delta x = 1/300$.}
		\begin{tabular}{lll}
			
			\toprule[1.5pt]
			
			test problem & numerically violated constraints & failure time \\
			
			\midrule[1.5pt]
			
			T1a, $\gamma = 0$ & $w \ge w_{\min}$ & $t \approx 6.17\times 10^{-3}$ \\
			T1a, $\gamma = 2$ & $v \ge v_{\min}$ & $t \approx 5.99\times 10^{-3}$ \\
			T1b, $\gamma = 1$ & $c \le c_{\max}$ & $t \approx 1.37\times 10^{-2}$ \\
			T1b, $\gamma = 2$ & $v \ge v_{\min}$ & $t \approx 6.05\times 10^{-3}$ \\
			T2a, $\gamma = 0$ & $\rho > 0$, $w \ge w_{\min}$ & the first time step \\
			T2a, $\gamma = 1$ & $\rho > 0$, $v \ge v_{\min}$, $w \ge w_{\min}$, $c \ge c_{\min}$ & the first time step \\
			T2a, $\gamma = 2$ & $\rho > 0$ & $t \approx 1.00\times 10^{-2}$ \\
			T2b, $\gamma = 0$ & $\rho > 0$ & $t \approx 4.45\times 10^{-5}$ \\
			T2b, $\gamma = 1$ & $w \le w_{\max}$ & $t \approx 3.88\times 10^{-4}$ \\
			T2b, $\gamma = 2$ & $w \le w_{\max}$ & $t \approx 5.33\times 10^{-4}$ \\
			T3, $\gamma = 0$ & $\rho > 0$ & $t \approx 4.22\times 10^{-4}$ \\
			T3, $\gamma = 2$ & $v \ge v_{\min}$ & $t \approx 2.52\times 10^{-3}$ \\
			
			\bottomrule[1.5pt]
			
		\end{tabular}
		\label{Table3}
	\end{table}
	
	As discussed in \Cref{rmk:435}, enforcing the constraint $w \le w_{\max}$ may help mitigate the velocity overshoots.	For Test T2b, if the BP-OEDG scheme is employed without enforcing $w \le w_{\max}$, a non-physical velocity overshoot of significant magnitude will appear in the numerical results; see \Cref{fig:T2b_c}.
	
	\begin{figure}[t!]
		\centerline{
			\begin{subfigure}[t][][t]{0.47\textwidth}
				\includegraphics[trim=25 15 41 15,clip,width=\textwidth]{./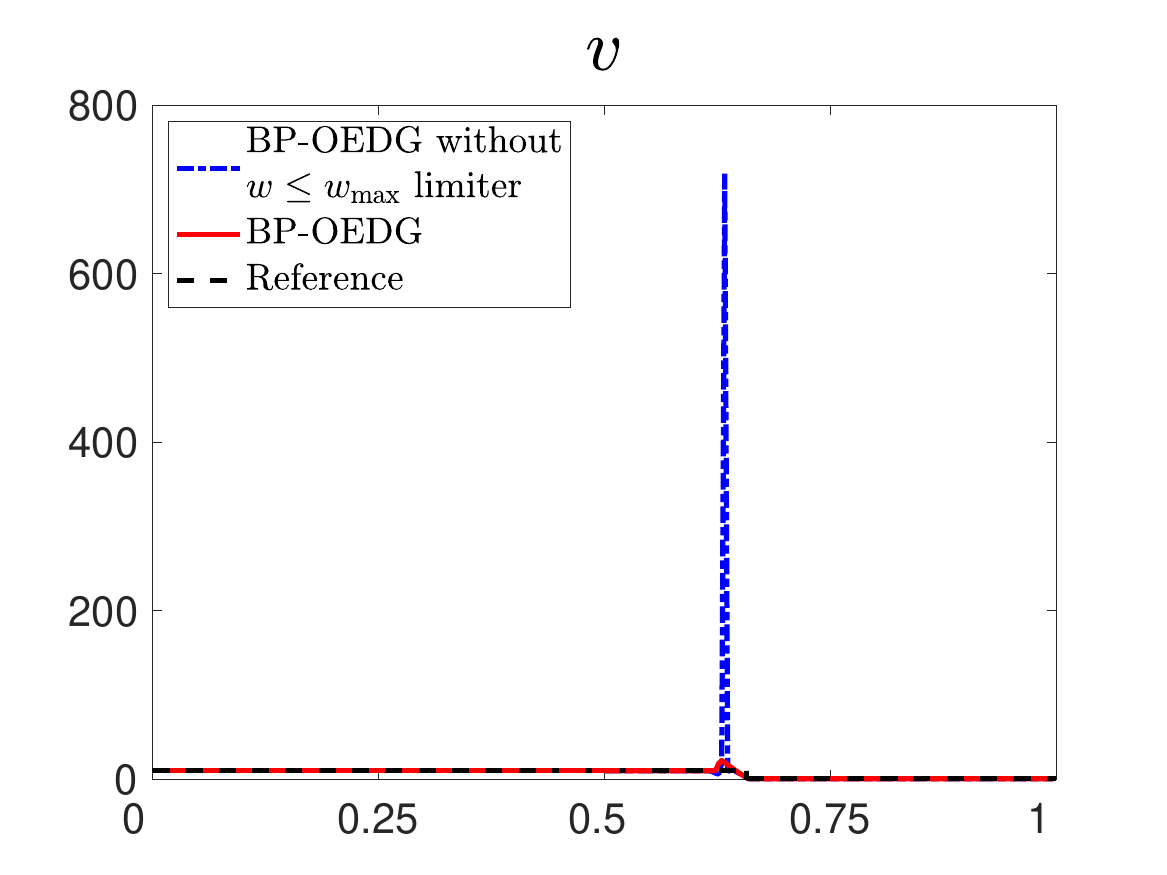} 
				\caption{\sf linear scale}
			\end{subfigure}
			\begin{subfigure}[t][][t]{0.47\textwidth}
				\includegraphics[trim=25 15 41 15,clip,width=\textwidth]{./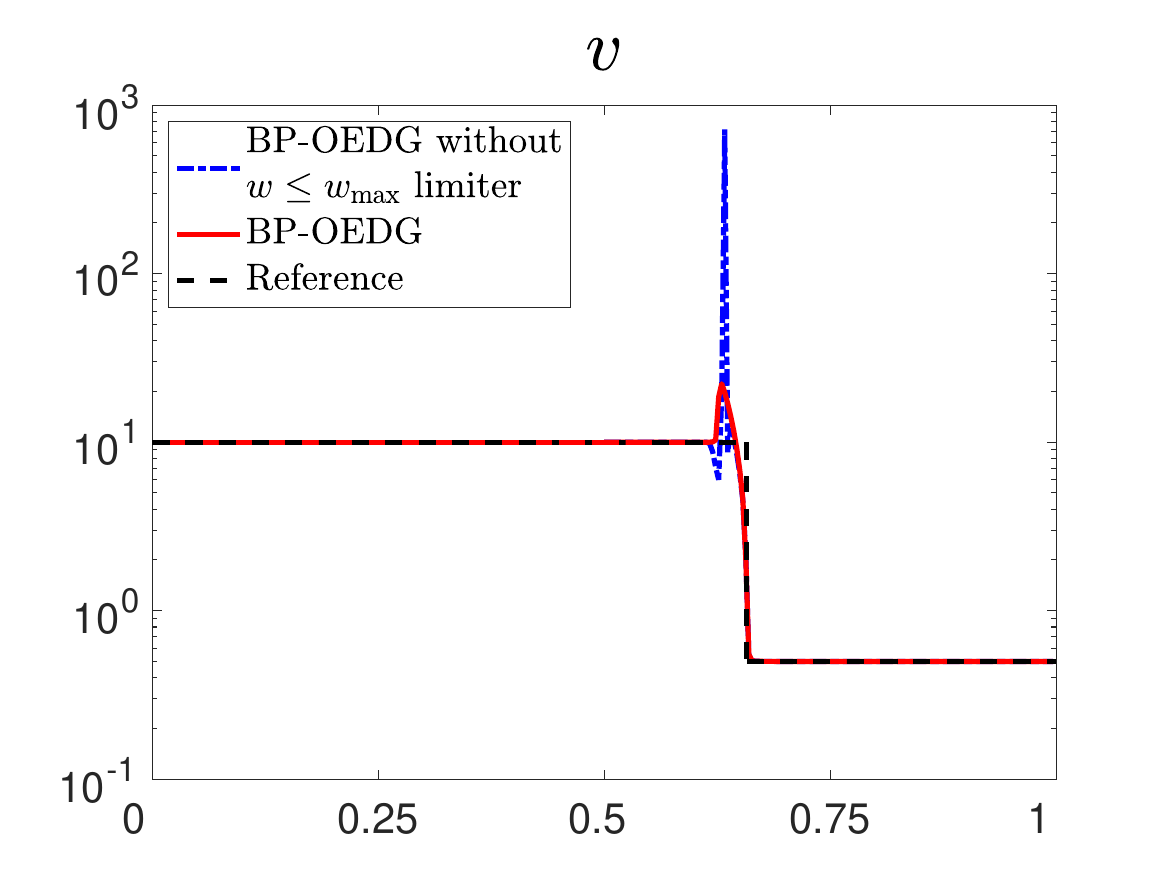}
				\caption{\sf logarithmic scale}
			\end{subfigure}
		}
		\caption{\sf Example \ref{exam2b}, Test T2b, $\gamma = 0$, BP-OEDG scheme compared with BP-OEDG scheme without enforcing the constraint $w \le w_{\max}$, $t =0.0309$, $\Delta x = 1/300$.}
		\label{fig:T2b_c}
	\end{figure}
\end{exa}

\begin{exa} \label{exam_LG}
	In this example, we compare the BP-OEDG scheme with global (\Cref{sec:global}) and local invariant domains (\Cref{sec:local}). Consider the following initial condition:
	\[
	(\rho,v,c)(x,0)
	=
	\begin{dcases}
		(1\times 10^{-12},10,1)
		&
		{\rm if } ~ x < 0.25,
		\\
		(0.8,0.5,1)
		&
		{\rm if } ~ 0.25 \le x < 0.985,
		\\
		(0.5,400,1)
		&
		{\rm if } ~ x \ge 0.985.
	\end{dcases}
	\]
	The parameter $\gamma = 0$. The numerical results obtained by the locally and globally BP-OEDG schemes are shown in \Cref{fig:LG}.
	As one can see, even with a global constraint $w \le w_{\max}$ enforced, a non-physical velocity overshoot can still be observed in the numerical results obtained by the globally BP-OEDG scheme. 
	This is due to the global $w$ upper bound ($\approx$ 399.37) being much larger than the local $w$ upper bound  ($\approx$ 0.30) near $x = 0.25$. In this case, enforcing a global constraint $w \le w_{\max}$ can no longer help mitigate the $v$-overshoot in the vicinity.
	
	\begin{figure}[th!]
		
		\centerline{
			
			\hfill
			
			\begin{subfigure}[t][][t]{0.47\textwidth}
				\includegraphics[trim=25 15 41 15,clip,width=\textwidth]{./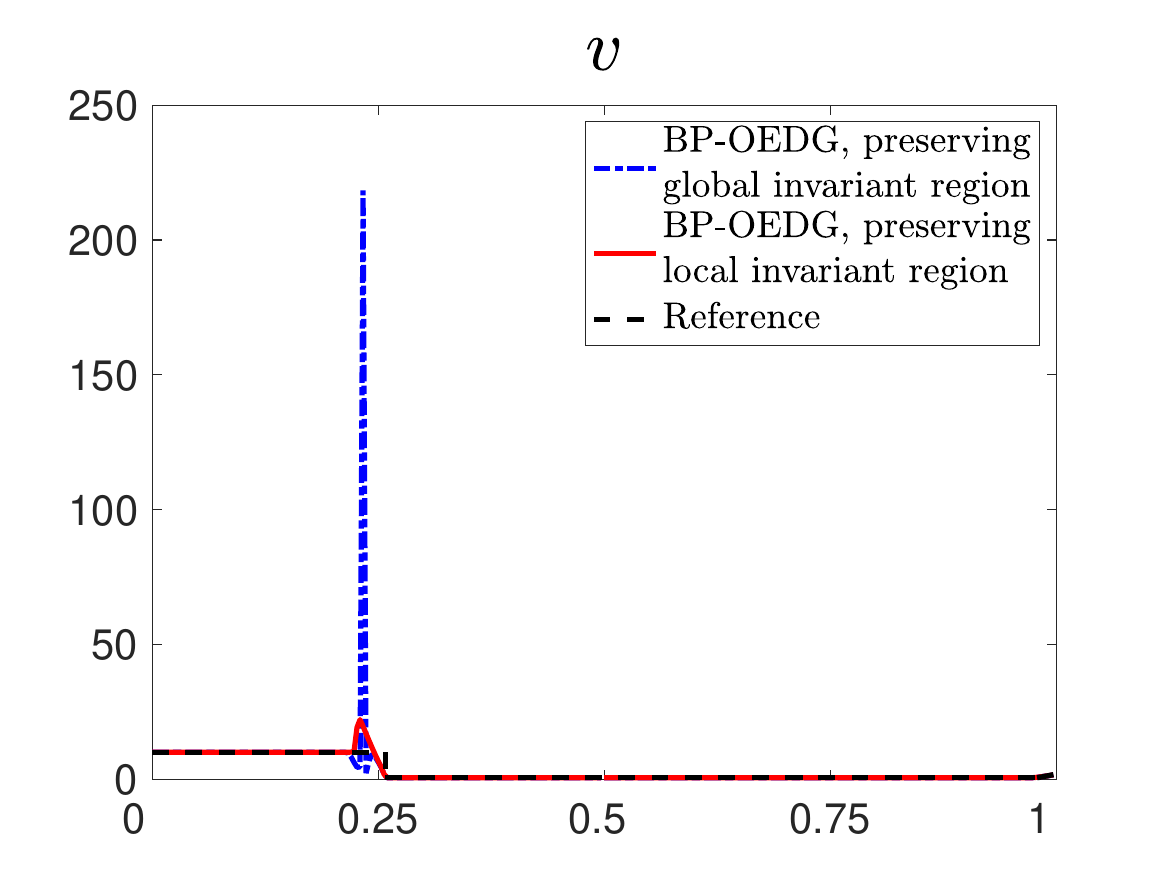} 
				\caption{\sf linear scale}
			\end{subfigure}
			
			\hfill
			
			\begin{subfigure}[t][][t]{0.47\textwidth}
				\includegraphics[trim=25 15 41 15,clip,width=\textwidth]{./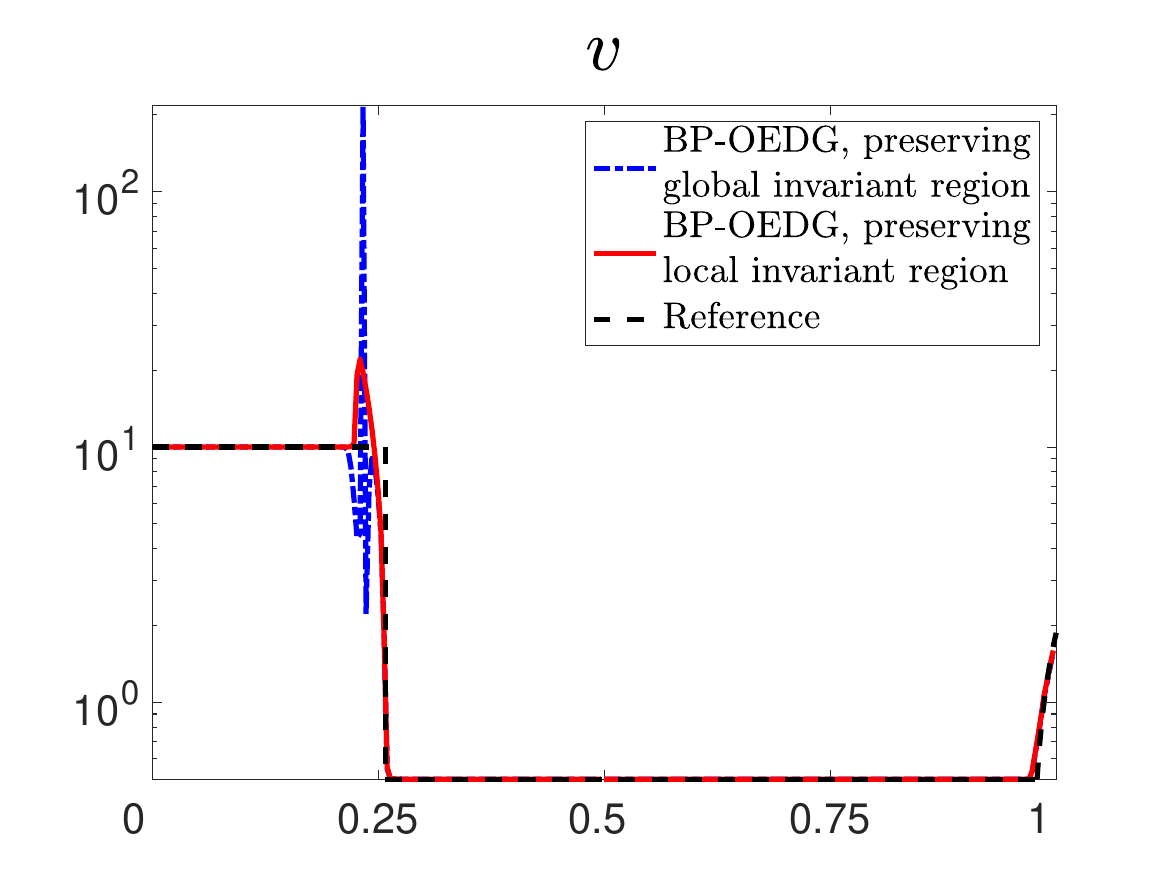}
				\caption{\sf logarithmic scale}
			\end{subfigure}
			
			\hfill
			
		}
		
		\caption{\sf Example \ref{exam_LG}, BP-OEDG scheme with global and local invariant domains, $t =0.0305$, $\Delta x = 1/300$.}
		\label{fig:LG}
	\end{figure}
	
\end{exa}

\subsection{Numerical experiments on road networks}\label{network}

This section studies the applications of the proposed BP-OEDG methods on road networks. 
Consider a road network consists of $N_R$ road segments and $N_J$ junctions, we partition the spatial domain on the $r$-th road by $N_x^{(r)}$ uniform cells $\big\{I_j^{(r)} := [x^{(r)}_{j - \frac{1}{2}}, x^{(r)}_{j + \frac{1}{2}}] \big \}_{j = 1}^{N_x^{(r)}}$. 
Here, $x^{(r)}_\frac12$ and $x^{(r)}_{N_x^{(r)}+\frac12}$ respectively label the locations of entry and exit of Road $r$.
The numerical solution on the $r$-th road segment is denoted by $\BU^{(r)}_h \in \mathbb{V}^k_h$. Let $\delta_{\ell}^-$ (resp. $\delta_{\ell}^+$) denote the set of indices representing incoming (resp. outgoing) roads connected to the Junction $\ell$. 
At road junctions, proper coupling conditions are needed to allocate the traffic fluxes from incoming roads ($i \in \delta^-_\ell$) to outgoing roads ($j \in \delta^+_\ell$).
Given the traffic states in the vicinity of the junction, namely,
\[
\textrm{the input of coupling condition:}
\quad
\left \{ \BU^{(i)}_h \left(x_{N_x^{(i)} + \frac{1}{2}}^{(i), -} \right) \right \}_{i \in \delta_\ell^-} {\rm and} \quad
\bigg\{ \BU^{(j)}_h \bigg(x_{\frac{1}{2}}^{(j), +} \bigg) \bigg\}_{j \in \delta_\ell^+},
\]
a coupling condition should be employed to determine the corresponding traffic states immediately outside the computational domain, that is,
\[
\textrm{the output of coupling condition:}
\quad
\left \{ \BU^{(i)}_h \left(x_{N_x^{(i)} + \frac{1}{2}}^{(i), +} \right) \right \}_{i \in \delta_\ell^-} {\rm and} \quad
\bigg\{ \BU^{(j)}_h \bigg(x_{\frac{1}{2}}^{(j), -} \bigg) \bigg\}_{j \in \delta_\ell^+},
\]
which are required to evolve the numerical solutions $\{\BU^{(r)}_h\}_{r=1}^{N_r}$ to the next time step, and are also used in updating global invariant domains (see \eqref{eq:1807}) and local invariant domains (see \eqref{eq:1917}--\eqref{eq:1982}). Various options for coupling condition are available in the literature, for example, \cite{garavello2006traffic,herty2006coupling,herty2006optimization,haut2007second,gottlich2021second}. 
In order to validate the robustness of the proposed BP-OEDG schemes on road network applications, we consider two different settings in the following numerical experiments: 
\begin{itemize}[leftmargin=9mm]
	\item{\bf HB network}: The original ARZ model with the coupling condition from \cite{haut2007second} on networks with diverging and/or merging junctions.
	
	\item{\bf GHMW network}: The AP ARZ model with the coupling condition from \cite{gottlich2021second} on networks with diverging, merging, and/or hybrid junctions.
\end{itemize}

For $j \in \delta_\ell^+$ and $i \in \delta_\ell^-$, $q_{ji} \geq 0$ denotes the traffic flux from Road $i$ to Road $j$, and we denote by $q_i^{-}$ (resp. $q_j^+$) the total incoming (resp. outgoing) flux of Road $i$ (resp. $j$), namely, $q_i^- = \sum_{j \in \delta_\ell^+} q_{ji}$, $q_j^+ = \sum_{i \in \delta_\ell^-} q_{ji}$.
For a junction with multiple incoming and outgoing roads, one should specify a traffic distribution matrix $\mathcal{A} = (a_{ji})_{i \in \delta_\ell^-, j \in \delta_\ell^+}$, where $0 \leq a_{ji} \leq 1$, and $q_{ji} = a_{ji} q_i^-$. Note that $a_{ji}$ denotes the fraction of vehicles on Road $i$ going to Road $j$, and $\sum_{j \in \delta_\ell^+} a_{ji} = 1$.
For the GHMW network, the fluxes $\mathbf{q}_j = (q_{ji})_{i \in \delta_\ell^-}$ of an outgoing Road $j$, are proportional to a given vector $\boldsymbol{\beta} = (\beta_i)_{i \in \delta_\ell^-}$.
For the HB network, the vector $\boldsymbol{\beta}$ is determined by demand from Road $i$.
For simplicity, in the following examples, we assume the lengths of all road segments to be 1.

\begin{exa} \label{exam3a} 
	In this example, we use the AP ARZ model ($\gamma = 1$) to simulate traffic flows near a diverging junction with one incoming road (Road 1) and two outgoing roads (Roads 2 and 3). The initial conditions on these roads are given by
	\begin{equation*}
		\begin{gathered}
			\rho^{(1)}(x,0) = \rho^{(3)}(x,0) = 0.1, \quad 
			\rho^{(2)}(x,0) = 
			\left\{ \begin{aligned}
				& 0.2, \; &&{\rm if} \; x \in [0.2, 0.4] \cup [0.6, 0.8],\\
				& 0.1, \; &&{\rm otherwise},
			\end{aligned}\right. 
			\\
			w^{(r)}(x,0) = c^{(r)}(x,0) = 1, \; r = 1, 2, 3.
		\end{gathered}
	\end{equation*}
	The traffic flow entering the junction from incoming Road 1 is equally distributed between outgoing Roads 2 and 3.
	We implement the proposed BP-OEDG scheme to solve this problem, and the obtained numerical results, presented in \Cref{fig:T4}, align consistently with the results presented in \cite{buli2020discontinuous} and \cite{canic2015runge}.
	
	\begin{figure}[h!]
		\centering
		\centerline{
			\includegraphics[trim=25 55 41 75,clip,width = 0.5\textwidth]{./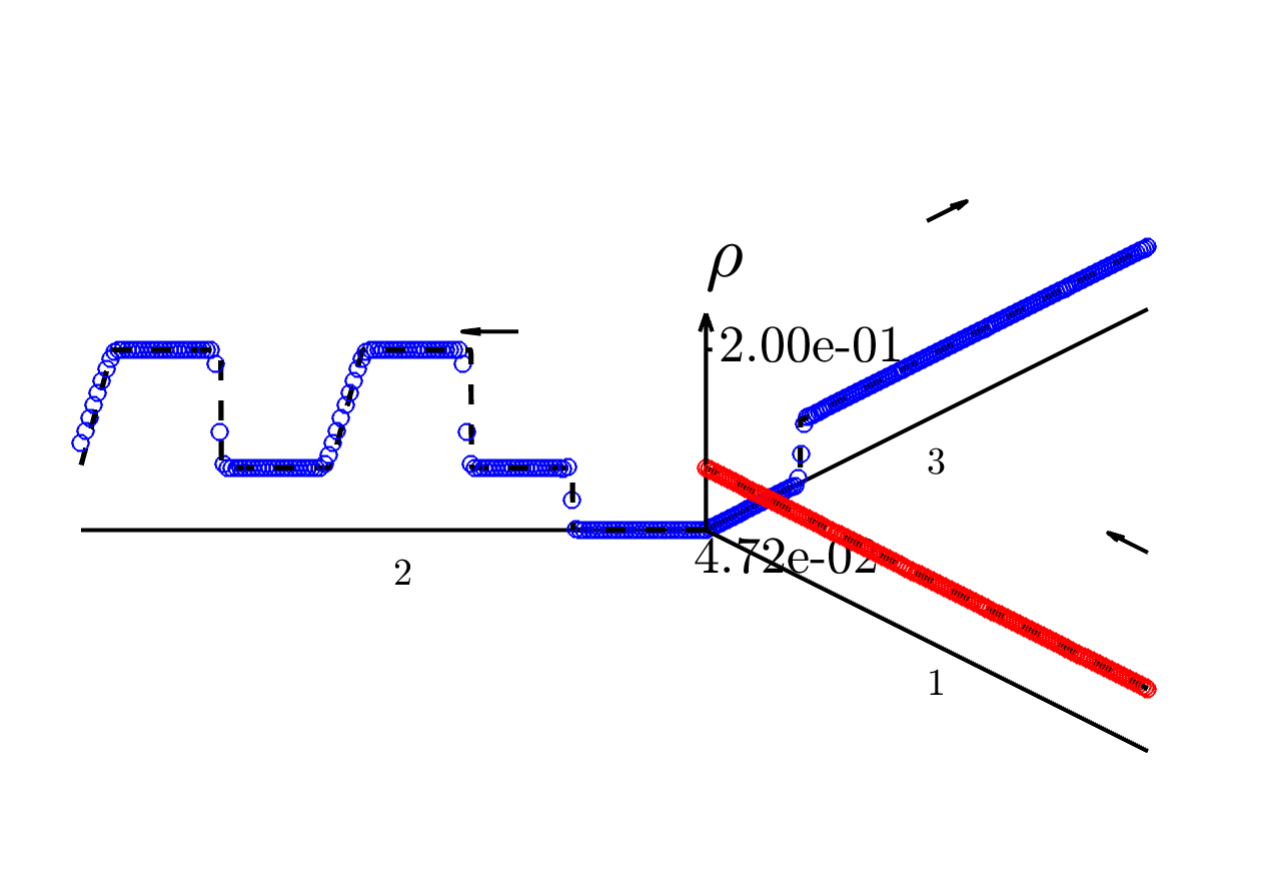}
			\includegraphics[trim=25 55 41 75,clip,width = 0.5\textwidth]{./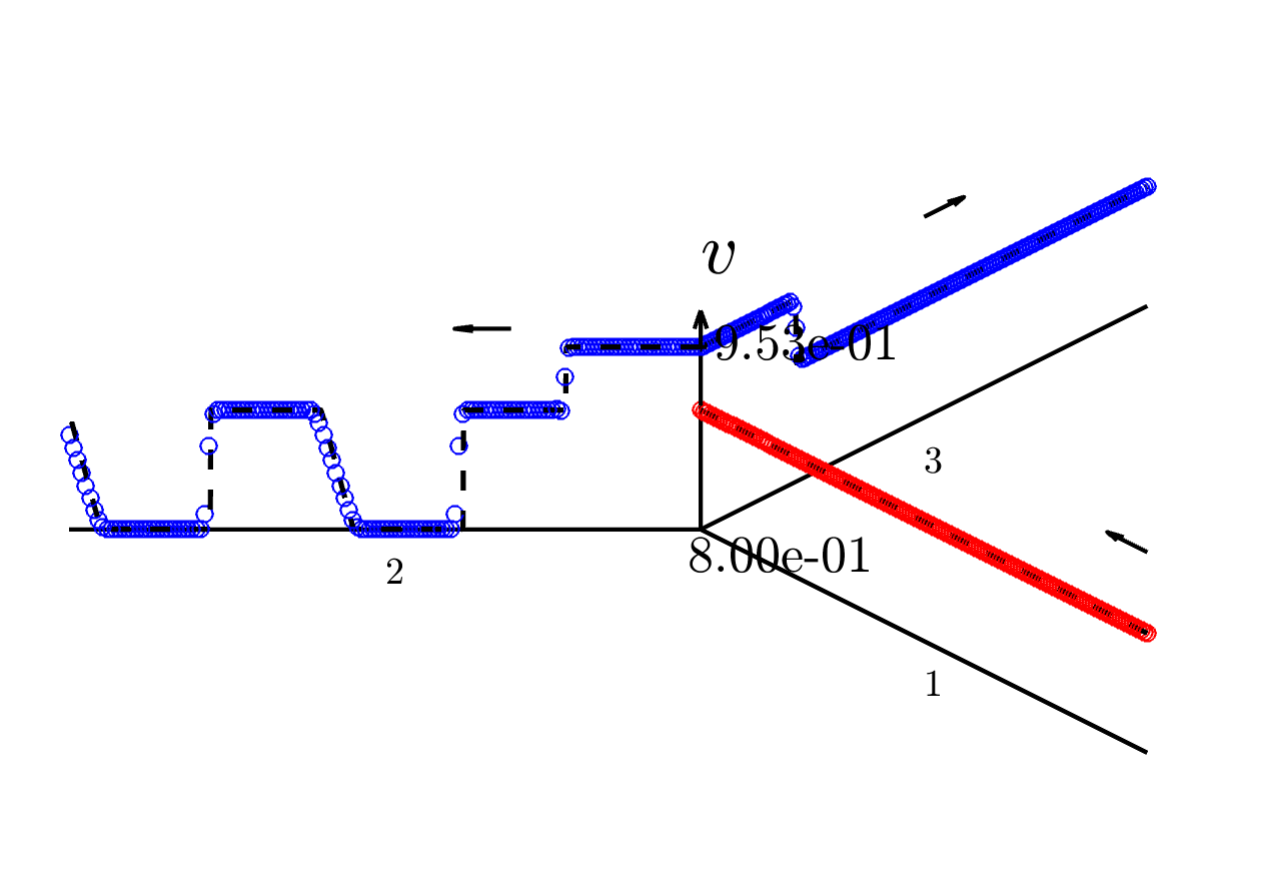}
		}
		\caption{\sf Example \ref{exam3a}, BP-OEDG scheme,
			$t = 0.25$, $\Delta x = 1/300$. Reference solutions are depicted in black dashed lines.}
		\label{fig:T4}
	\end{figure}	
\end{exa}

\begin{exa} \label{exam3b}
	We consider an HB network consisting of one diverging junction, one incoming road (Road 1) and three outgoing roads (Roads 2--4). The initial conditions on these roads are given by
	\begin{gather*}
		\rho^{(1)}(x,0) = \rho^{(3)}(x,0) = 0.1, 
		\quad
		\rho^{(2)}(x,0) = 
		\begin{dcases}
			10^{-10} 
			&
			{\rm if} \; x \in [0.2, 0.4] \cup [0.6, 0.8],
			\\
			0.1
			&
			{\rm otherwise},
		\end{dcases} \\
		\rho^{(4)}(x,0) = 0.05 \big[1 + \sin{(5 \pi x)}\big] + 10^{-10}, 
		\quad
		w^{(r)}(x,0) = 0.5,\; r = 1, 2, 3, 4.
	\end{gather*}
	The traffic flow entering the junction from incoming Road 1 is equally distributed among outgoing Roads 2--4.
	The BP-OEDG scheme is used to solve this problem with $\gamma = 0$ and $\gamma = 2$, respectively. The obtained numerical results are presented in \Cref{fig:T5}. As one can see, the wave structures are sharply resolved, and they match well with the results reported in \cite{buli2020discontinuous,canic2015runge}.
	
	\begin{figure}[th!]
		\centering
		\centerline{
			\includegraphics[trim=75 40 61 50,clip,width = 0.5\textwidth]{./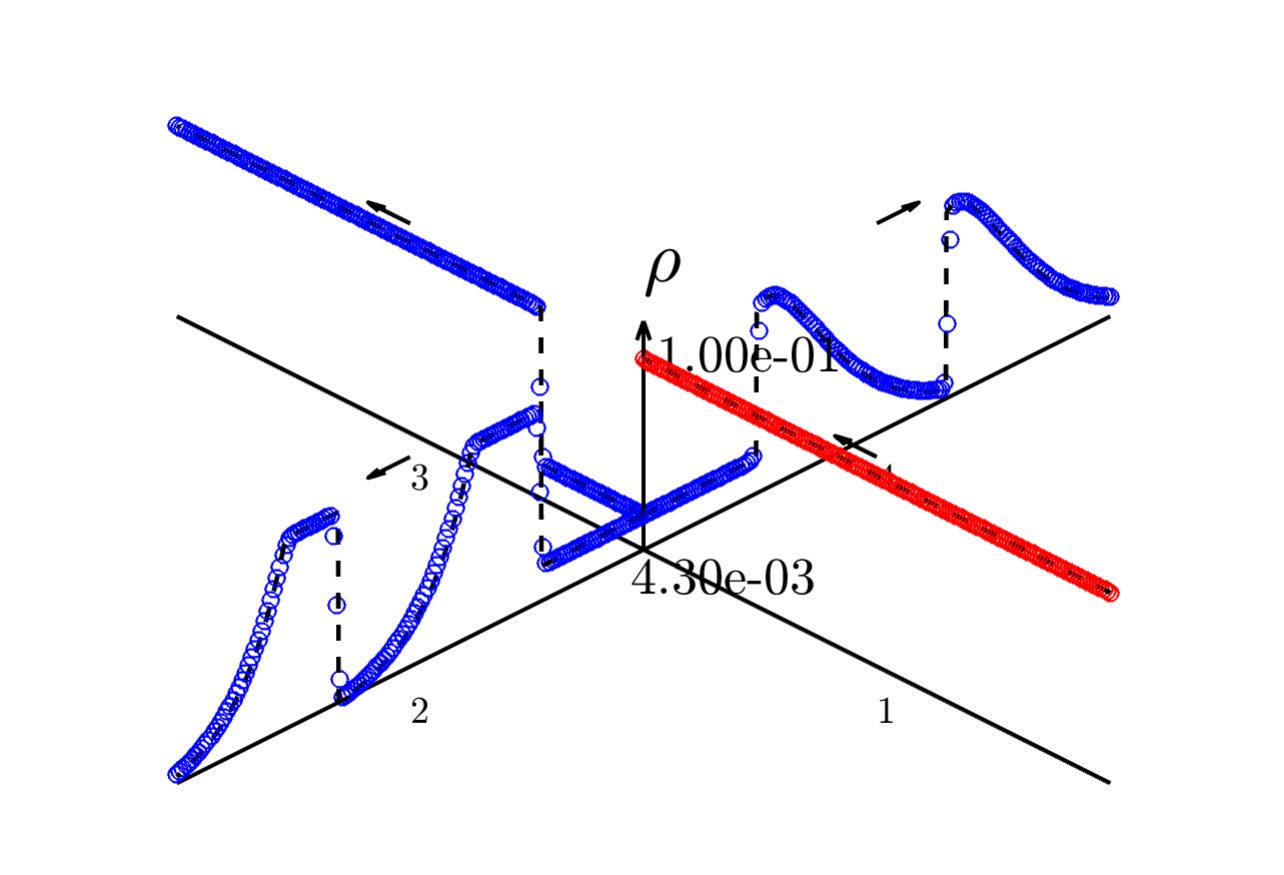}
			\includegraphics[trim=75 40 51 50,clip,width = 0.5\textwidth]{./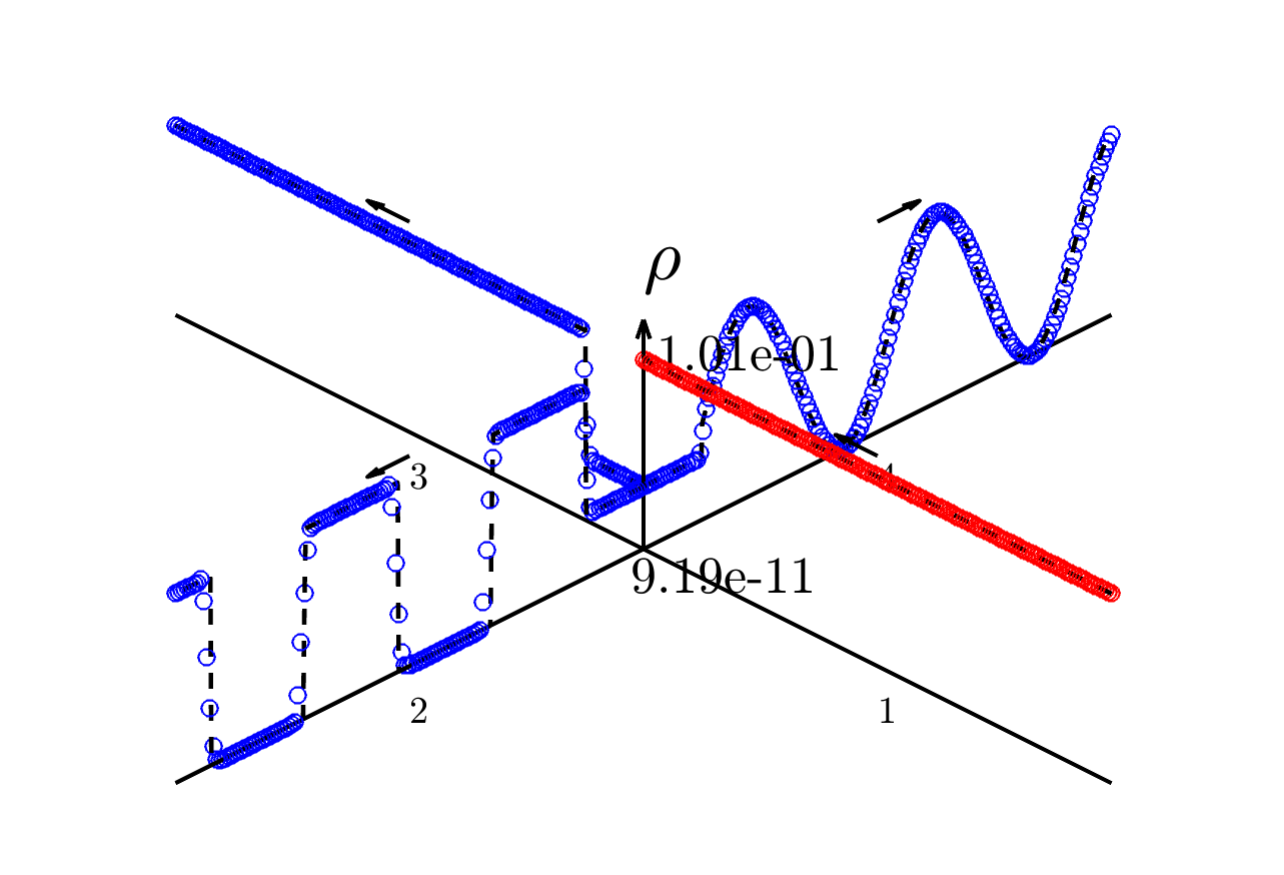}
		}
		\caption{\sf Example \ref{exam3b}, BP-OEDG scheme, $t = 0.1$, $\Delta x = 1/150$. Left: $\gamma = 0$; Right: $\gamma = 2$. Reference solutions are depicted in black dashed lines.}
		\label{fig:T5}
	\end{figure}
	
\end{exa}

\begin{exa}\label{ex:T7}
	
	In this example, we compare the HB and GHMW junction conditions. We consider a diverging junction connected to one outgoing road (Road 1) and two incoming roads (Road 2--3). The initial conditions on these roads are given by $(\rho^{(1)},\rho^{(2)},\rho^{(3)})(x,0)
	=
	(2,4,6)$
	and
	$(y^{(1)},y^{(2)},y^{(3)})(x,0)
	=
	(12,24,72)$.
	
	With respect to the GHMW networking setting, the priority vector is $\boldsymbol{\beta} = (1/2,1/2)^\top$.
	For HB network setting, please note that the priority vector $\boldsymbol{\beta}$ is determined by the traffic demands on the incoming roads.
	The BP-OEDG scheme is adopted to solve these problems, and the obtained numerical results are presented in \Cref{fig:T7}.
	As one can see, the discontinuities on Roads 2 and 3 are all well resolved without oscillations, while the contact discontinuities on Road 1 are slightly smeared. In \Cref{fig:T7b}, the discontinuity in $c$ is resolved without overshoots or undershoots, demonstrating that the proposed BP-OEDG method preserves the minimum and maximum principles of $c$.
	
	\begin{figure}[th!]
		\centering
		\begin{subfigure}[t][][t]{0.32\textwidth}
			\includegraphics[trim=25 45 55 82,clip,width = \textwidth]{./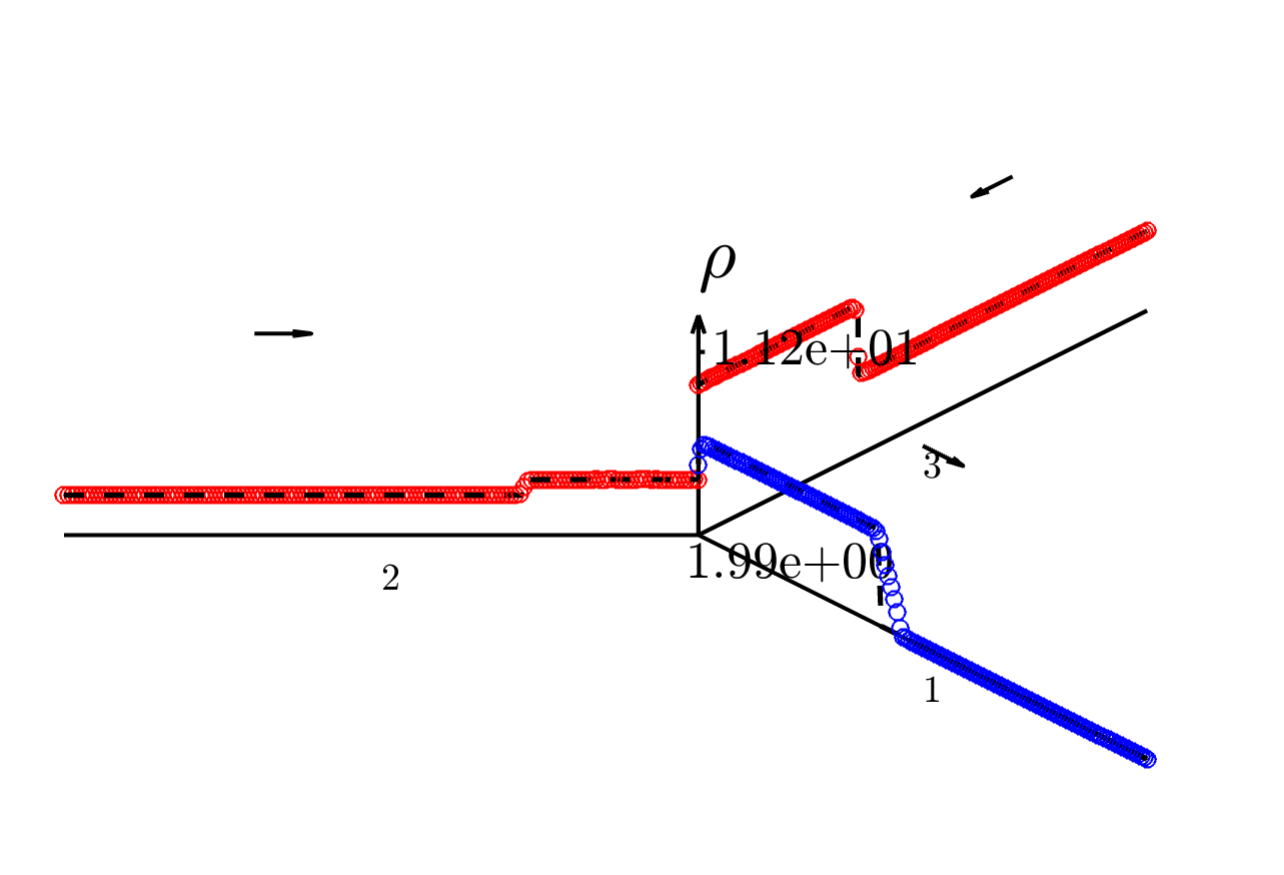}
			\caption{\sf HB network, $c \equiv 1$.}
			\label{fig:T7a}
		\end{subfigure}
		\begin{subfigure}[t][][t]{0.64\textwidth}
			\includegraphics[trim=25 45 55 82,clip,width = 0.48\textwidth]{./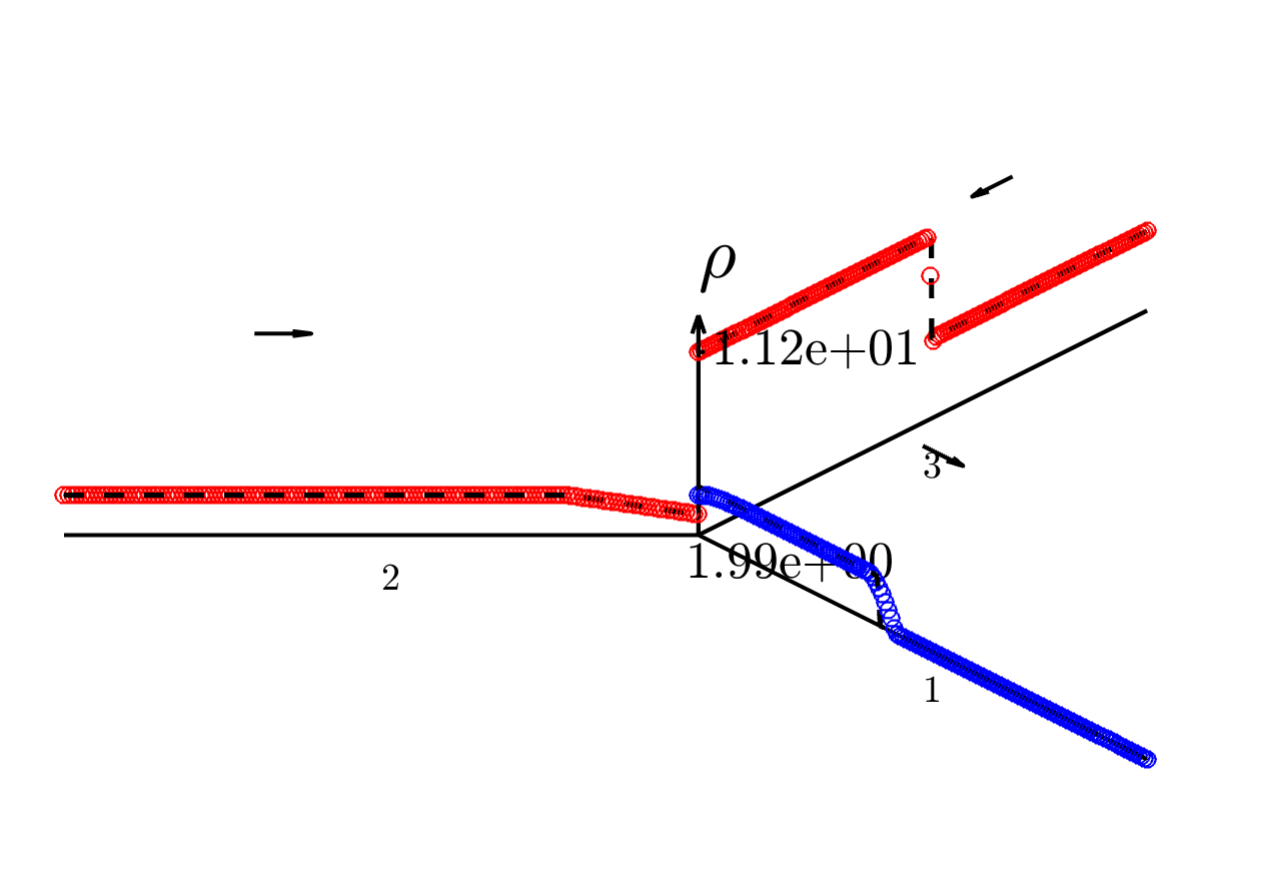}
			\includegraphics[trim=25 45 55 82,clip,width = 0.48\textwidth]{./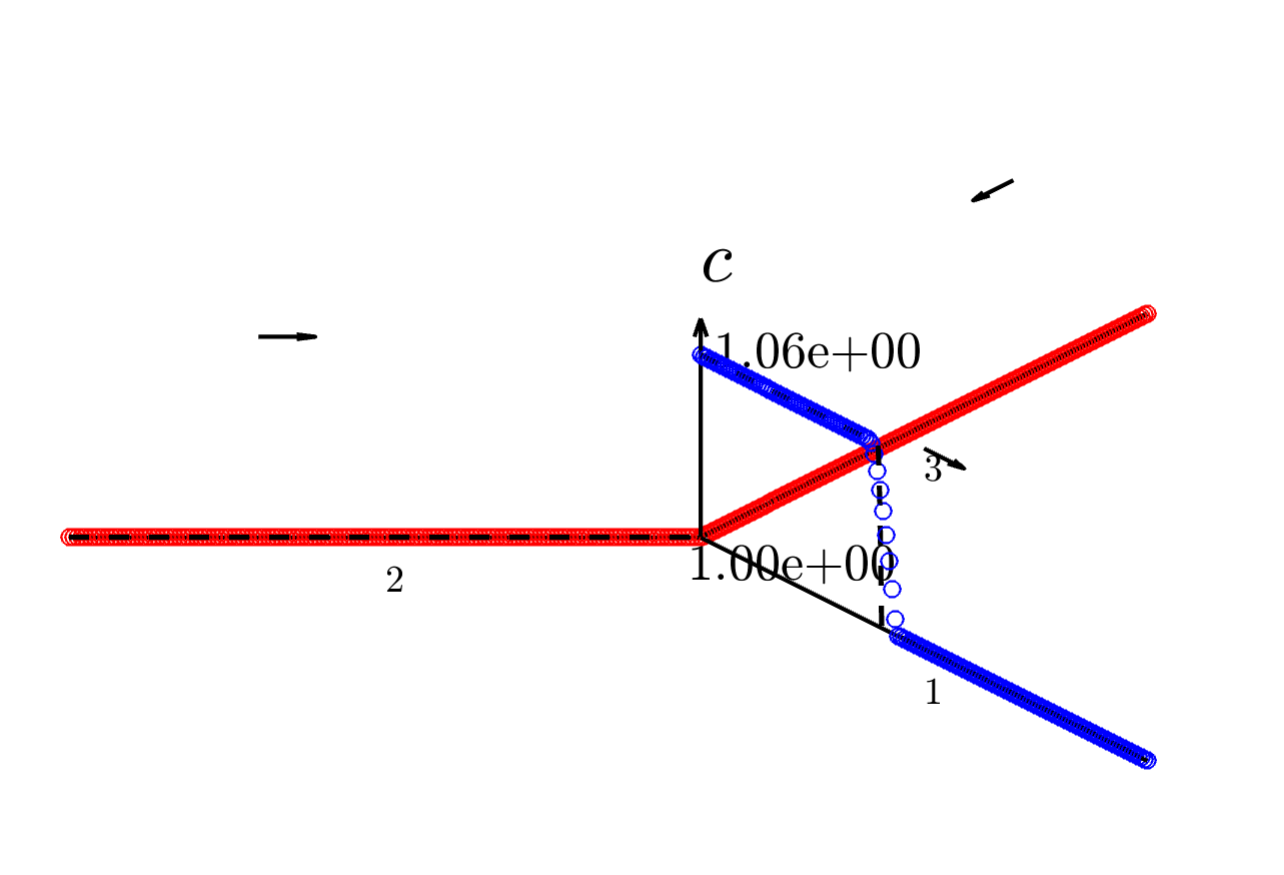}
			\caption{\sf GHMW network.}
			\label{fig:T7b}
		\end{subfigure}
		\caption{\sf Example \ref{ex:T7}, BP-OEDG scheme, $t = 0.1$, $\gamma = 2$. Reference solutions are depicted in black dashed lines.}
		\label{fig:T7}
	\end{figure}
	
\end{exa}

\begin{exa} \label{ex:T6} 
	In this example, we consider a diverging junction which is connected to one outgoing road (Road 1) and three incoming roads (Road 2--4). All four road segments have length of 1. The initial conditions on these roads are given by
	\begin{gather*}
		\rho_0^{(1)}(x) = 0.02, 
		\quad
		\rho_0^{(2)}(x) 
		= 
		\rho_0^{(3)}(x) = \rho_0^{(4)}(x) =
		\begin{dcases}
			0.005 & {\rm if} \; x \in [0, 0.1],\\
			0.01 & {\rm if} \; x \in (0.1, 0.4] \cup (0.7, 1],\\
			0.02 & {\rm otherwise}.
		\end{dcases}
		\\
		w^{(r)}(x,0) \equiv 0, \; r = 1, 2, 3, 4. 
	\end{gather*}
	The HB junction condition is adopted. Specifically, all three incoming roads contribute identical traffic influx into the outgoing Road 1. The BP-OEDG scheme is used to solve this problem, and the obtained numerical results are demonstrated in \Cref{fig:T6}.
	
	\begin{figure}[h!]
		\centering
		\centerline{
			\includegraphics[trim=55 30 61 30,clip,width = 0.5\textwidth]{./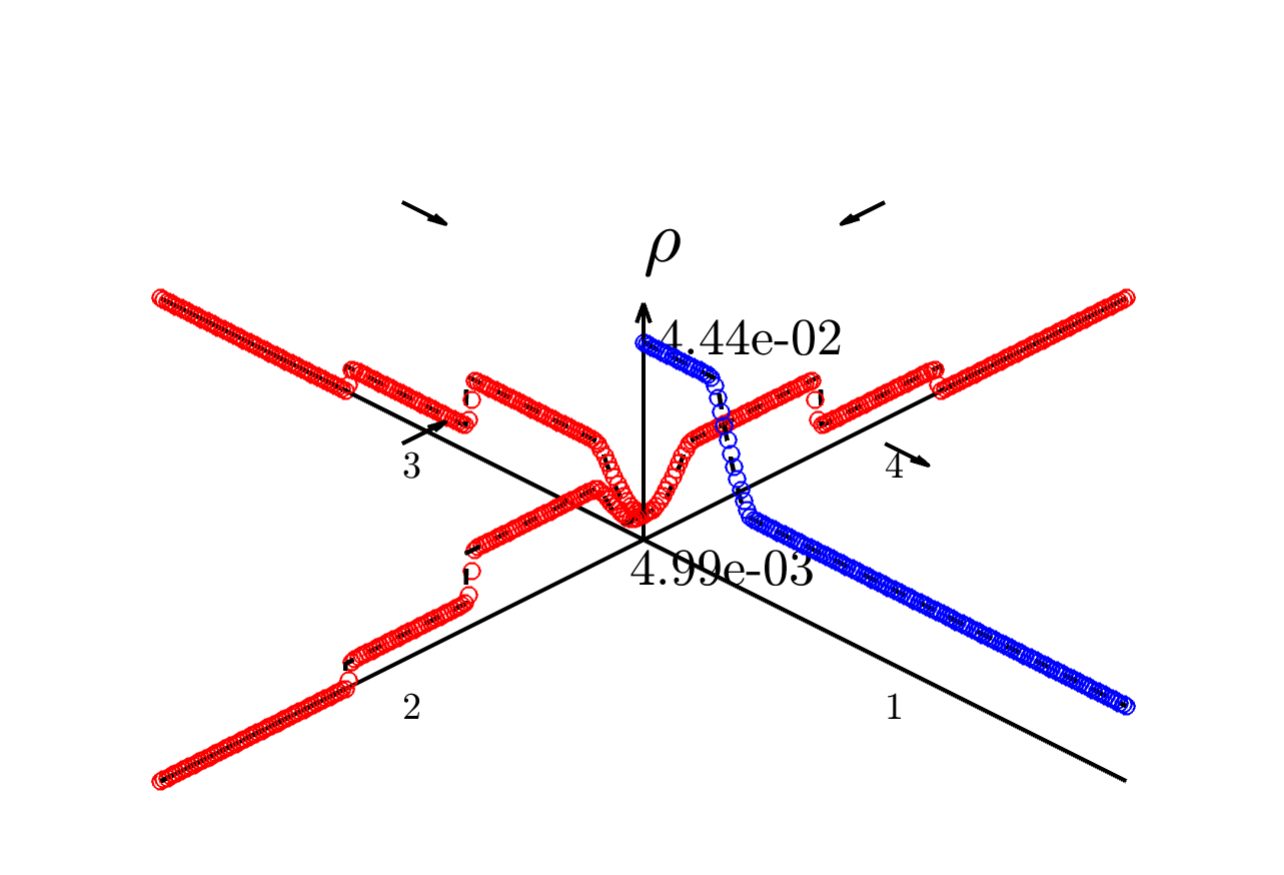}
			\includegraphics[trim=55 30 61 30,clip,width = 0.5\textwidth]{./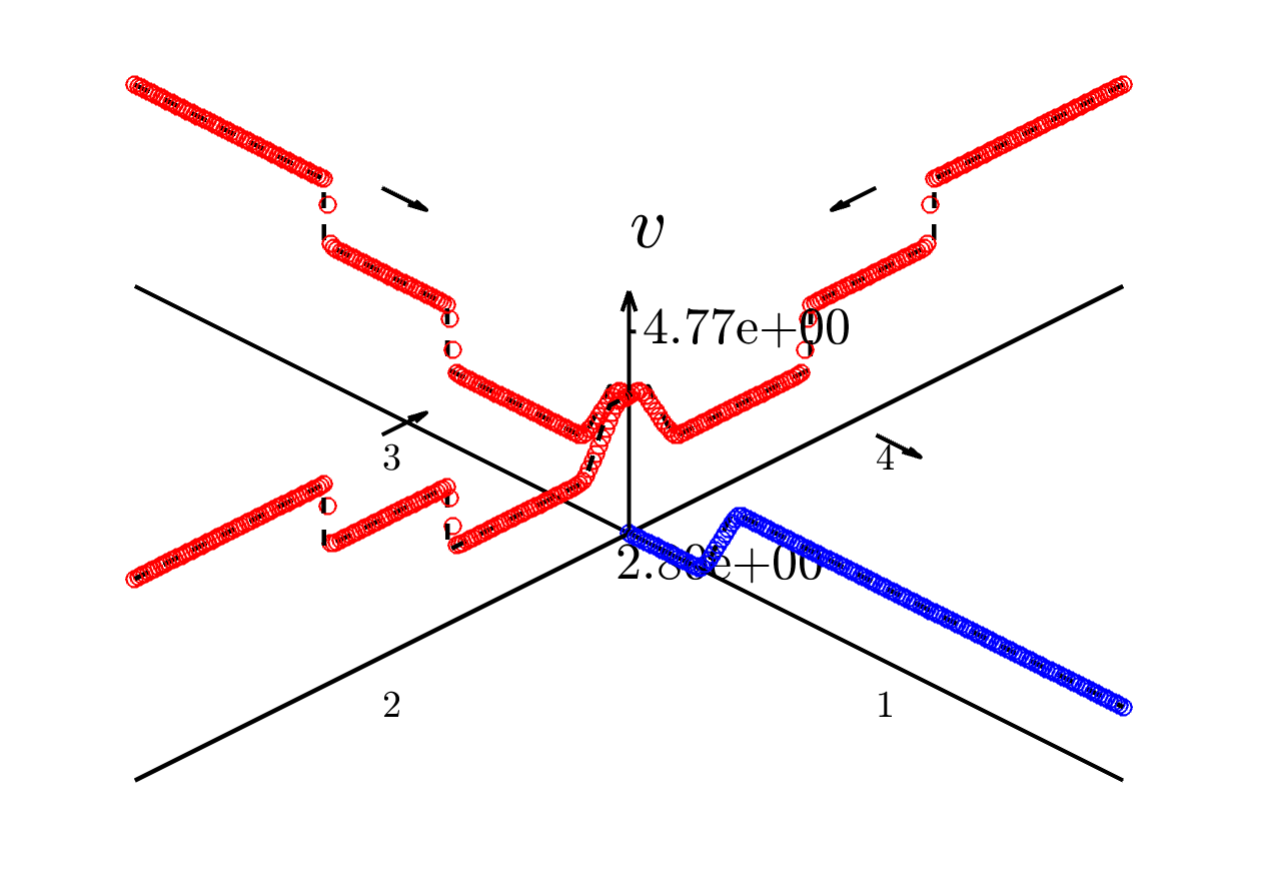}
		}
		\caption{\sf Example \ref{ex:T6}, $\gamma = 0$, BP-OEDG scheme, $t = 0.08$, $\Delta x = 1/150$. Reference solutions are depicted in black dashed lines.}
		\label{fig:T6}
	\end{figure}
\end{exa}

\begin{exa}	\label{ex:T8}
	
	In this example, we test the propose BP-OEDG scheme on a traffic simulation over a network involving seven roads and three junctions, as depicted in \Cref{fig:T8}. The initial conditions are given by 
	\begin{align*}
		& 
		\rho^{(1)}(x,0) = \rho^{(2)}(x,0) = \rho^{(6)}(x,0) = 0.1, 
		\quad 
		\rho^{(7)}(x,0) = 10^{-10}, 
		\\
		& \rho^{(3)}(x,0) = \rho^{(5)}(x,0) =
		\begin{dcases}
			0.18 + 10^{-10} 
			& 
			{\rm if} \; x \in [0, 0.1],
			\\
			0.1 \left[ 0.9 + (1-x) \cos\left(\frac{25 \pi (x - 0.1)}{0.9}\right)\right] + 10^{-10} 
			& 
			{\rm otherwise},
		\end{dcases} \\
		& 
		\rho^{(4)}(x,0) = 0.1 \big[1 + x \cos{(25 \pi x}) \big] + 10^{-10},
		\quad
		w_0^{(r)}(x) = 0.5, \; r = 1, \cdots, 7.
	\end{align*}
	The parameter $\gamma = 2$. 
	At Junction A, the traffic distribution matrix is given by
	\(
	\mathcal{A} =
	\begin{pmatrix}
		0.5 & 0.5 \\
		0.5 & 0.5 
	\end{pmatrix}
	\),
	and both Junctions A and B have the priority vector $\boldsymbol{\beta} = (0.5,0.5)^\top$. 
	At Junction C, incoming traffic flux is evenly distributed between outgoing Roads 4 and 7. 
	The proposed BP-OEDG scheme is used to conduct the simulation, and the results are presented in \Cref{fig:T8}. 
	The trigonometric traffic waves initially located on Roads 3, 4, and 5, are accurately presented by the numerical results on a relatively coarse mesh. The discontinuity initially located on Road 4 passes through Junction A, and then introduces discontinuities on Roads 1 and 3, both of which are sharply resolved without oscillations.
	On Road 7, a discontinuity with near-vacuum state on its downstream side is captured without noticeable overshoots or undershoots.
	
	\begin{figure}[th!]
		\centering
		\includegraphics[trim=0 55 0 110,clip,width = 0.7\textwidth]{./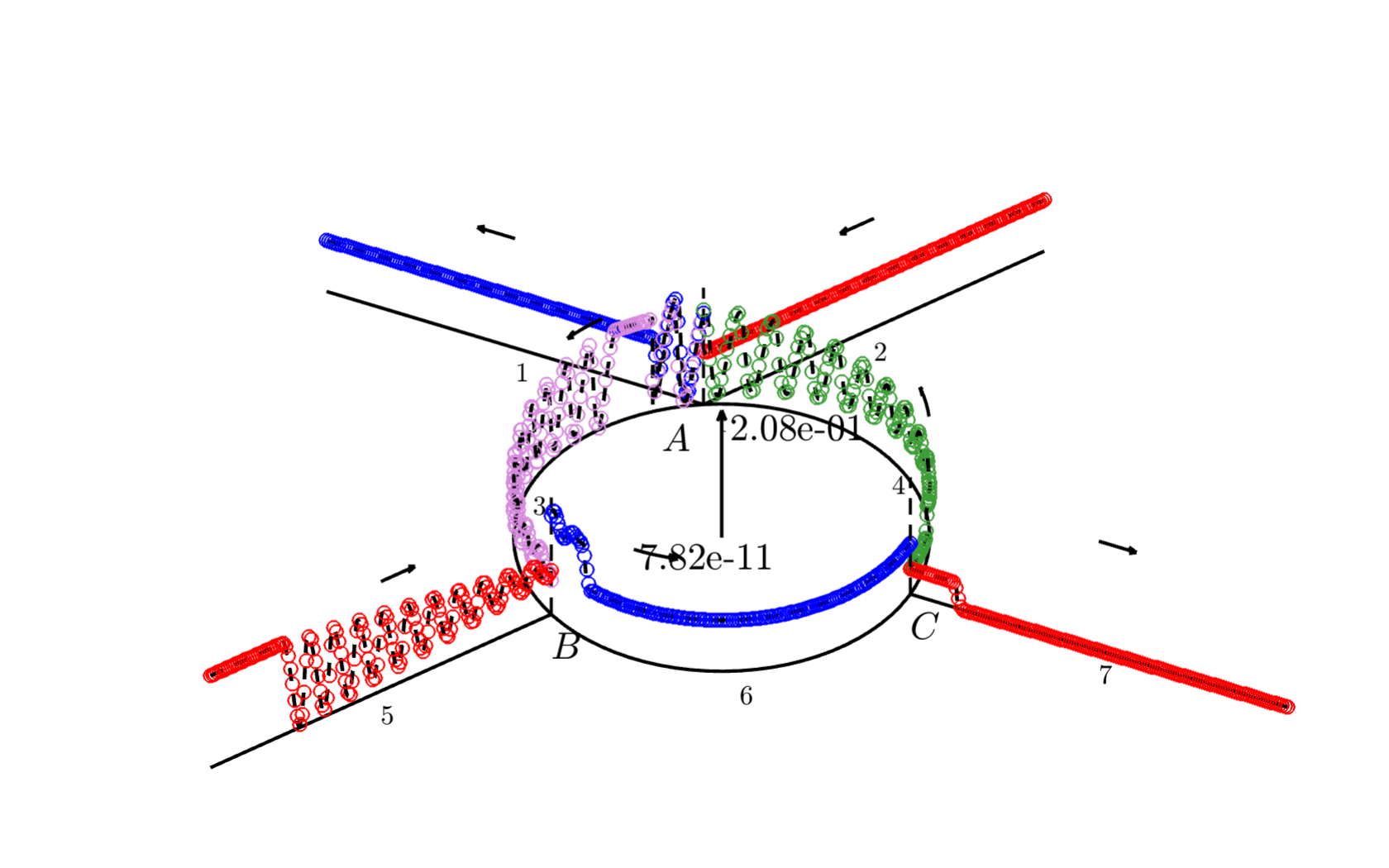}
		\caption{\sf Example \ref{ex:T8}, BP-OEDG scheme, $\Delta x = 1/150$, $t = 0.25$. Reference solutions are depicted in black dashed lines.}
		\label{fig:T8}
	\end{figure}
	
\end{exa}

\begin{exa}\label{ex:T9}
	In the last example, we consider a network with 48 roads and 25 junctions. In \Cref{fig:T9a}, a schematic diagram of the network is shown, and each road segment is assigned a color: black, blue, red or green. The initial conditions on these roads are given according to the color assigned:
	\begin{align*}
		& \rho^{\text{black}}(x,0) \equiv 10^{-10}, 
		\quad 
		\rho^{\text{blue}}(x,0) \equiv 0.1, 
		\quad 
		\rho^{\text{red}}(x,0) = 0.1 \left[ 1 + x \cos{(25 \pi x})\right] + 10^{-10}, 
		\\
		& \rho^{\text{green}}(x,0) =
		\begin{dcases}
			0.1 & {\rm if} \; x \in [0.1, 0.9], \\
			10^{-10} & {\rm otherwise}, 
		\end{dcases}
		\quad w^{(\cdot)}(x,0) \equiv 0.5 \quad \textrm{for all road segments}.
	\end{align*}
	
	The parameter $\gamma = 1$ and the HB junction condition is adopted.
	The proposed BP-OEDG scheme is adopted to simulate this test, and the resulting numerical solution on Roads 1, 26, and 44 are shown in \Cref{fig:T9b}. 
	
	
	\begin{figure}[h!]
		\centering
		\includegraphics[trim=185 25 162 18,clip,width = 0.5\textwidth]{./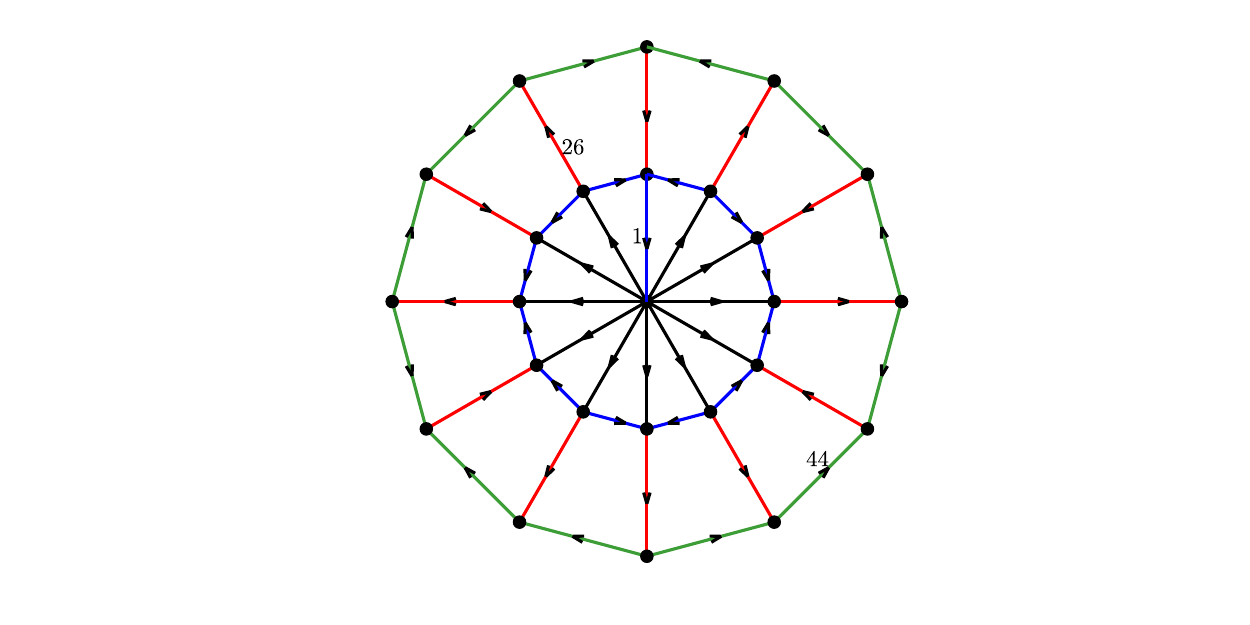}
		\caption{\sf Example \ref{ex:T9}, schematic diagram of the network.}
		\label{fig:T9a}
	\end{figure}
	
	\begin{figure}[h!]
		\centering
		\centerline{
			
			\begin{subfigure}[t][][t]{0.32\textwidth}
				\includegraphics[trim=25 15 35 15,clip,width = \textwidth]{./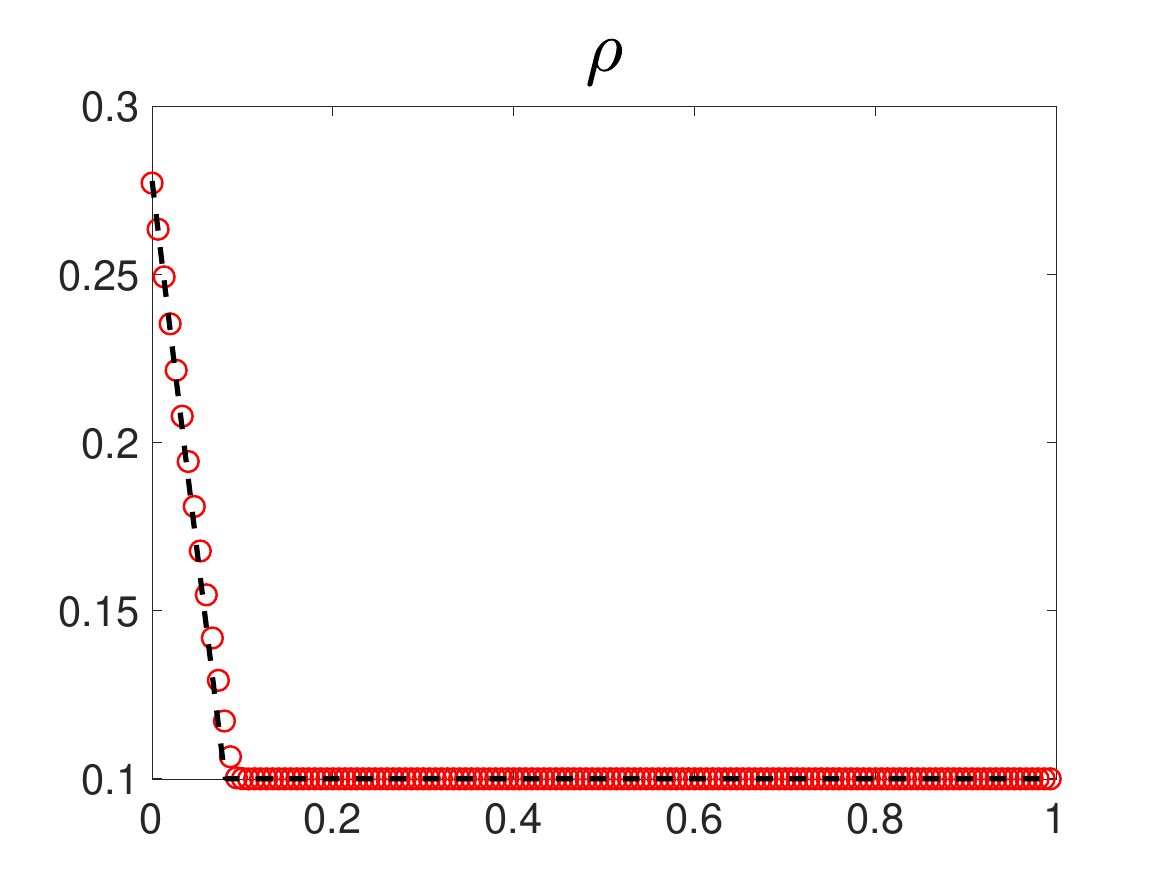}
				\caption{\sf Road 1.}
			\end{subfigure}
			
			\begin{subfigure}[t][][t]{0.32\textwidth}
				\includegraphics[trim=25 15 35 15,clip,width = \textwidth]{./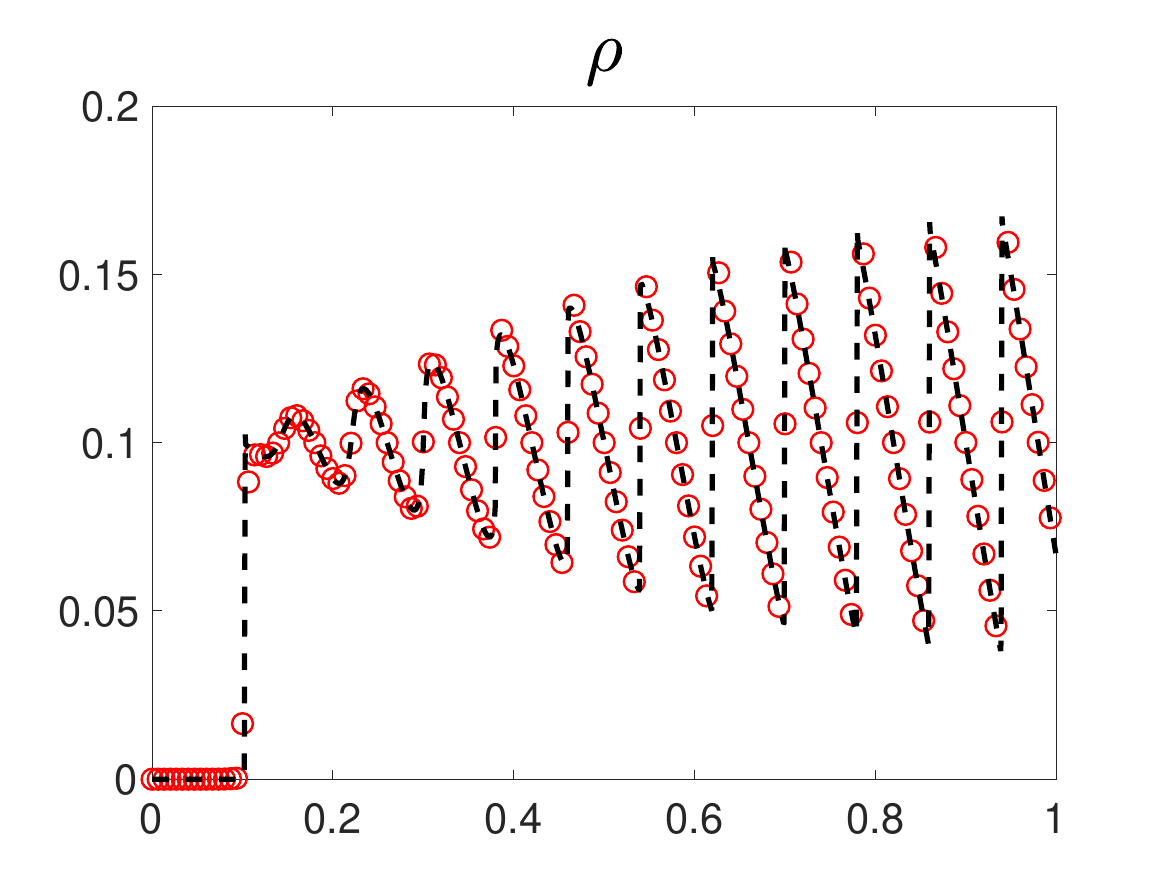}
				\caption{\sf Road 26.}
			\end{subfigure}
			
			\begin{subfigure}[t][][t]{0.32\textwidth}
				\includegraphics[trim=25 15 35 15,clip,width = \textwidth]{./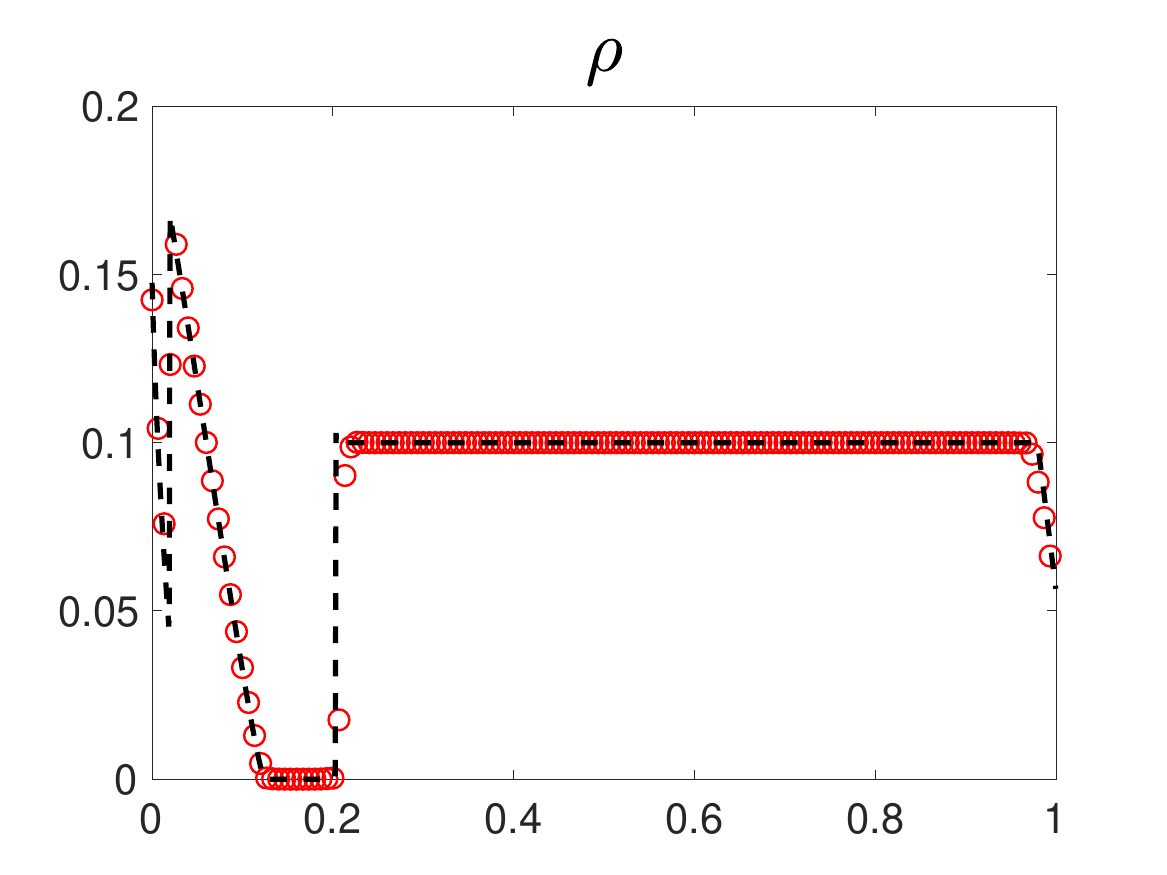}
				\caption{\sf Road 44.}
			\end{subfigure}
			
		}
		\caption{\sf \Cref{ex:T9}, BP-OEDG scheme, $t = 0.25$, $\Delta x = 1/150$. Reference solutions are depicted in black dashed lines.}
		\label{fig:T9b}
	\end{figure}
\end{exa}

\section{Conclusion}
In this paper, we have developed the bound-preserving oscillation-eliminating discontinuous Galerkin (BP-OEDG) schemes for the adapted pressure Aw--Rascle--Zhang (AP ARZ) model and its degenerate case, the original ARZ model. These schemes satisfy the maximum principles of \( w \) and \( c \), the minimum principle of \( v \), \(w\), and \(c\), as well as the positivity of \( \rho \). 
As the foundation of this work, we have systematically investigated the invariant domains induced by the aforementioned bounds and rigorously proven several key properties, including convexity and the (generalized) Lax--Friedrichs splitting properties. 
Notably, although our BP-OEDG schemes do not address the \( v \) maximum principle directly, the enforced maximum principle of \( w \) effectively maintains the \( v \) values under an alternative upper bound, which also effectively controls the velocity overshoots.
Furthermore, we have introduced the local and global approaches to estimate the upper and lower bounds of Riemann invariants $w$, $v$, and $c$. These two approaches can be conveniently employed to handle boundary conditions at the two ends of single road segment and coupling conditions at junctions in road networks.
We have presented several challenging numerical examples, including applications on a single road segment, on road networks, and with near-vacuum states, to demonstrate the BP property, robustness, and effectiveness of the proposed BP-OEDG schemes.

\bibliographystyle{model1-num-names}%
\bibliography{references}

\end{document}